\documentclass[12pt]{amsart}
\usepackage[english]{babel}
\usepackage{amsmath}
\usepackage{amsthm}
\usepackage{mathtools}
\usepackage{mathrsfs}
\usepackage{amssymb}
\usepackage{xcolor}
\usepackage{xfrac}
\usepackage{esint}
\usepackage{graphicx}
\usepackage{float}
\usepackage{array}
\usepackage{enumitem}
\usepackage[new]{old-arrows}
\usepackage[all,cmtip]{xy}
\usepackage{tikz-cd}
\usepackage{centernot}
\usepackage{stmaryrd}
\usepackage{comment}
\excludecomment{confidential}

\makeatletter
\@namedef{subjclassname@2020}{\textup{2020} Mathematics Subject Classification}
\makeatother

\numberwithin{equation}{section}

\newtheorem{theorem}{Theorem}[section]

\newtheorem{lemma}[theorem]{Lemma}

\newtheorem{definition}[theorem]{Definition}

\DeclareMathOperator{\Alg}{Alg}

\DeclareMathOperator{\alg}{alg}

\newcommand{\N}{\mathbb{N}}
\newcommand{\Z}{\mathbb{Z}}

\newcommand{\R}{\mathbb{R}}

\newcommand{\TT}{\mathbb{T}}

\def\a{\alpha}
\def\b{\beta}
\def\e{\varepsilon}

\def\d{\delta}

\def\G{\Gamma}
\def\l{\lambda}

\def\L{\Lambda}

\def\O{\Omega}

\def\r{\rho}

\def\s{\sigma}

\def\v{\varphi}

\DeclareMathOperator{\spann}{span}
\newcommand{\mc}{\mathcal}
\newcommand{\mf}{\mathfrak}

\begin{document}

\title[Nonlocal Kuramoto--Sivashinsky equation]{A bifurcation theory approach to the nonlocal Kuramoto--Sivashinsky equation}

\author[P. Cubillos]{Pablo Cubillos} \thanks{}
\address{Departamento de Análisis Matemático y Matemática Aplicada \\
	Universidad Complutense de Madrid (UCM) \\
	Madrid, 28040, Spain.}
\email{pacubill@ucm.es}

\author[R. Granero-Belinchón]{Rafael Granero-Belinchón} \thanks{R. G-B. was supported by the project ``An\'alisis Matem\'atico Aplicado
y Ecuaciones Diferenciales" Grant PID2022-141187NB-I00 funded by MCIN/AEI/10.13039/501100011033/FEDER, UE}
\address{Departamento de Matemáticas, Estadística y Computación \\
	Universidad de Cantabria (UC) \\
	Santander, 39005, Spain.}
\email{rafael.granero@unican.es}

\author[J. C. Sampedro]{Juan Carlos Sampedro} \thanks{J. C. S. and P. C. have been supported by the Ministry of Science and Innovation of Spain under the
	Research Grant PID2024–155890NB-I00 and by the Institute of Interdisciplinary Mathematics of the Complutense University of Madrid}
\address{Departamento de Matemática Aplicada y Ciencias de la Computación \\
	Avenida de los Castros 46 \\
	Universidad de Cantabria (UC) \\
	Santander, 39005, Spain.}
\email{juancarlos.sampedro@unican.es}

\begin{abstract}
	We study the nonlocal Kuramoto--Sivashinsky equation on the one-dimensional torus, 
	\[
	u_t+u u_x=\L^{r}u-\varepsilon \L^{s}u,\qquad x\in\mathbb T,
	\]
	where $\varepsilon>0$, $s>1$, $r\in[-1,s)$. We first prove local and global well-posedness for initial data in $H^{3}(\mathbb T)$. We then investigate the steady-state problem and show that the trivial branch undergoes bifurcation at the critical values $\varepsilon_k=k^{\,r-s}$, $k\in\mathbb N$. Using the Crandall--Rabinowitz theorem we obtain smooth local curves of nontrivial equilibria emanating from each $(\varepsilon_k,0)$ and compute the bifurcation direction. To address the global continuation of these branches we derive global \emph{a priori} bounds and apply a global alternative based on the Fitzpatrick--Pejsachowicz--Rabier degree for Fredholm maps of index zero. In particular, for the component bifurcating from the first critical point we prove that its $\varepsilon$-projection contains the interval $(2^{r-s},1)$, yielding the existence of nontrivial steady states for that parameter range. We complement the theory with numerical continuation results illustrating the bifurcation diagram and solution profiles.
\end{abstract}

\keywords{Kuramoto--Sivashinsky equation, Bifurcation theory, Fredholm operators}
\subjclass[2020]{ 35B10, 35S15, 35S05, 47J15, 47A53}

\maketitle

\section{Introduction}
Although there are different ideas on whether there is a unified subject called ``\emph{Partial Differential Equations}" \cite{brezis1998partial,klainerman2010pde}, there are different questions that are ubiquitous in the community of people studying PDEs. Of course, the first questions one may want to answer are related to the definition of \emph{well-posed problem in the sense of Hadamard.} Namely, given a PDE, one may want to ensure the existence of a maybe local in time solution (in a certain suitable sense), the uniqueness of such solution (at least in certain class of functions) and the continuous dependence of the solution on the initial and, if necessary, boundary data. The answers to these three questions are basic in order applied scientists can use certain PDE as a predictive tool. Once these very initial problems are solved, two natural questions arises: Given the solution of a PDE, does such a solution exists globally in time? and, if so, what is the behavior of such solution? It is in this latter one that we focus our efforts.

In this paper we consider the following nonlinear and nonlocal partial differential equation
\begin{equation}
	\label{Eq1.1}
u_t+	uu_{x}= \L^{r}u-\varepsilon \L^{s}u, \quad x\in \mathbb{T},
\end{equation}
$$
u(x,0)=u_0(x),
$$
where $u_0$ is an odd periodic initial data, $\varepsilon>0$, $r\in[-1,s)$ and $s > 1$ are fixed parameters, $\mathbb{T}=[-\pi,\pi]$ with periodic boundary conditions and the spatial  operators are given as the following Fourier multipliers
$$
\L^{\alpha} u : = \sum_{n\in\Z,n\neq0} |n|^{\alpha} \widehat{u}(n) \frac{e^{inx}}{\sqrt{2\pi}}, \quad x\in \mathbb{T}, \quad \widehat{u}(n)=\frac{1}{\sqrt{2\pi}}\int_{\TT}u(x)e^{inx}\; dx.
$$
The previous equation \eqref{Eq1.1} appears in different problems in mathematical physics. On the one side, when $r=0$ and $s=4$ the equation goes by the name of Burgers--Sivashinsky equation \cite{goodman1994stability}, when $r=2$ and $s=4$, the equation is the celebrated Kuramoto--Sivashinsky equation \cite{nicolaenko1985some,tadmor1986well} and when $0\leq r<s$ and $1<s\leq2$ it is usually referred as the nonlocal Kuramoto--Sivashinsky equation \cite{granero2015nonlocal}. 

We will see below (see also \cite{granero2015nonlocal,nicolaenko1985some,tadmor1986well}) that the previous equation \eqref{Eq1.1} has global in time classical solutions regardless of the size of the initial data (assuming that the initial data has the required regularity). Furthermore, it has been reported that the PDE \eqref{Eq1.1} exhibits spatio-temporal chaos \cite{nicolaenko1985some} when $\varepsilon$ is small enough and that $U(x)\equiv0$ is an stable steady state if $\varepsilon$ is large enough. Our goal in this paper is to complete the bifurcation diagram for the previous equation. In other words, we want to perform a bifurcation analysis of the possible steady states of equation \eqref{Eq1.1}. Hence, we study periodic odd solutions of the nonlinear and nonlocal equation
\begin{equation}
	\label{Eq1.2}
	uu_{x}= \L^{r}u-\varepsilon \L^{s}u, \quad x\in \mathbb{T}.
\end{equation}
In particular, the main results of this paper are aimed to rigorously prove the following bifurcation diagram.

\begin{center}
	\begin{figure}[h!]
		\label{F1}
		\includegraphics[scale=0.75]{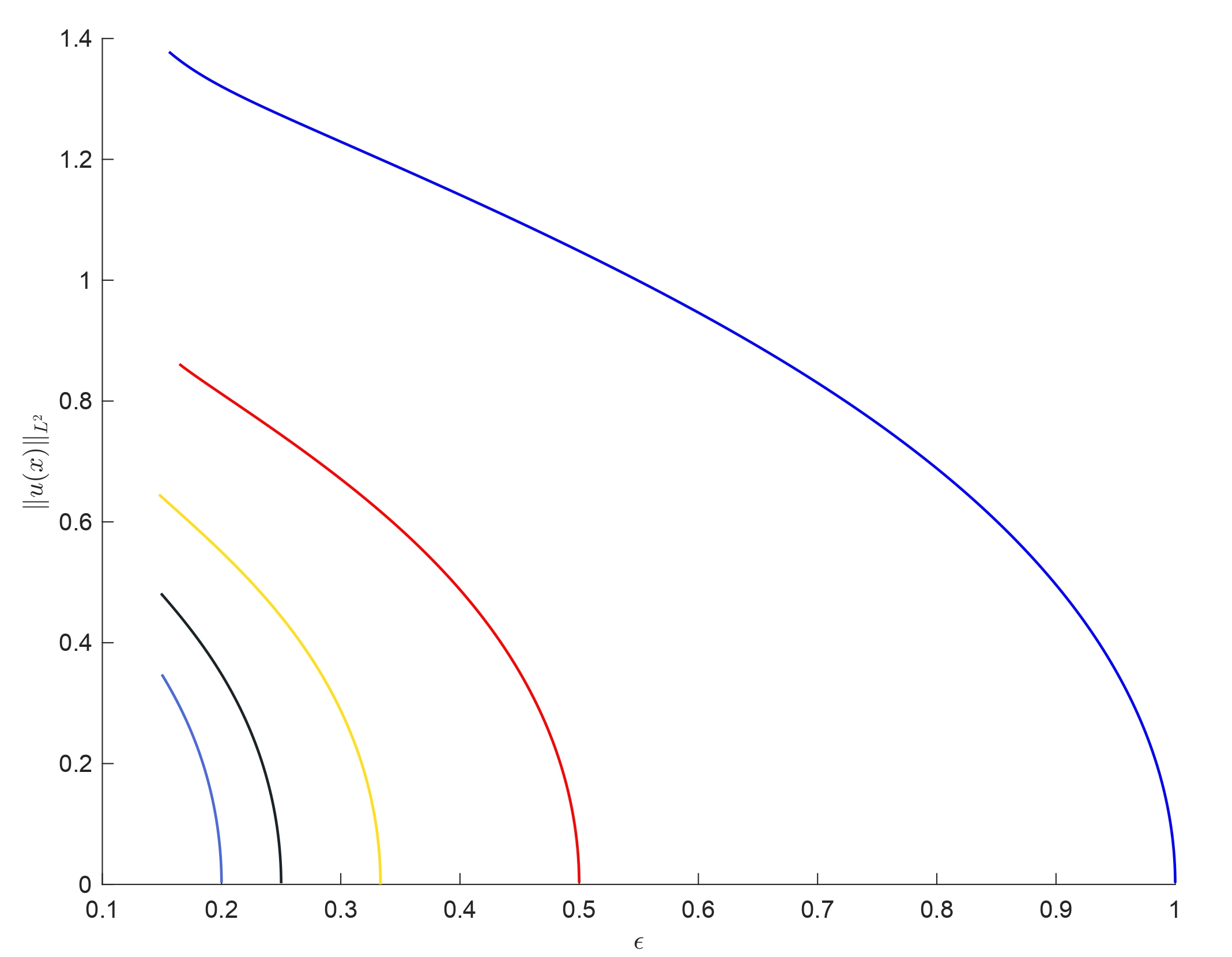}
		\caption{Bifurcation diagram for $r=\tfrac{1}{2}$ and $s=\tfrac{3}{2}$.}
	\end{figure}
\end{center}

In fact, we establish that there is a discrete sequence of values of $\varepsilon$ such that, using local bifurcation analysis (see Theorem \ref{Th3.6}), there exists a unique non-trivial steady state solution to \eqref{Eq1.1} locally near these points. More precisely, these are given by $\e_{k}=k^{r-s}$, $k\in\N$. For instance, in the particular case of Figure \ref{F1}, that is, $r=\tfrac{1}{2}$ and $s=\tfrac{3}{2}$, these are $\e_{k}=k^{-1}$, $k\in\N$. Furthermore, using global bifurcation analysis (see Theorem \ref{Th5.4}), we are able to extend the range of possible $\varepsilon$ for the first bifurcation branch. In fact, we are able to prove the existence of a non-trivial steady state to \eqref{Eq1.1} for the range
$$
\varepsilon\in(2^{r-s},1).
$$
Finally, we establish a qualitative alternative describing the possible behaviour of nontrivial steady states as $\varepsilon\to0^+$ (see Theorem~\ref{Th5.5}). More precisely, given any sequence of solutions $(\varepsilon_n,u_n)$ with $\varepsilon_n\to0$, either the sequence develops large high-frequency content in the sense that
\[
\limsup_{n\to+\infty}\|u_n\|_{\dot H^{s/2}}=+\infty,
\]
or the solutions become small in amplitude, namely $\|u_n\|_{L^\infty}\to0$ (and the two scenarios are not mutually exclusive).

We conclude the introduction by briefly describing the abstract bifurcation tools that underpin our main results.
The global alternative (see Theorem \ref{Th5.4}) we use was developed by L\'opez-G\'omez and Mora-Corral in \cite{LGMC} and later refined in \cite{LGSM} using the Fitzpatrick--Pejsachowicz--Rabier degree \cite{FPRa,FPRb,PR}. This degree extends the classical Leray--Schauder degree to nonlinear Fredholm maps of index zero, and it is therefore well suited to settings in which the relevant operators are not compact perturbations of the identity.

To the best of our knowledge, the use of Fredholm-degree methods in the study of nonlocal periodic problems of the form \eqref{Eq1.2} is still rather limited (see also \cite{SP}). In recent years, local bifurcation techniques—most notably the Crandall--Rabinowitz theorem—have been widely employed in the analysis of fluid-type equations (see, e.g., \cite{castro2016uniformly,garcia2023global}). Obtaining global information on the branches that emanate from the bifurcation points is typically more delicate, as it requires a continuation principle together with \emph{a priori} bounds and a suitable topological degree. In our setting, we choose to work within the Fredholm framework via the Fitzpatrick--Pejsachowicz--Rabier degree. This approach is well adapted to the analytic structure of \eqref{Eq1.2} and yields a convenient global alternative, from which we derive global information on the continua of solutions bifurcating from the trivial branch.

This paper is organized as follows. In Section \ref{Se2} we establish the local and global well-posedness of \eqref{Eq3.1} for initial data in $H^{3}(\TT)$. In Section \ref{Se3} we analyze the local structure of the bifurcation diagram in Figure \ref{F1} using the Crandall--Rabinowitz theorem, and we determine the bifurcation direction of the branches emanating from the bifurcation points. Section \ref{Se4} is devoted to deriving global \emph{a priori} bounds for solutions of \eqref{Eq1.2}, which are crucial for the application of the global bifurcation theory. In Section \ref{Se5} we study the global behaviour of the connected component of solutions bifurcating from the first bifurcation point, and we prove the existence of a nontrivial steady state of \eqref{Eq1.1} for $\varepsilon\in(2^{r-s},1)$. Finally, Section \ref{Se6} describes the numerical methods used to compute the bifurcation diagram in Figure \ref{F1} and the solution profiles shown in Figure \ref{F2}.

\subsection{Notation}
Given $s > 0$, along this paper we work with the Sobolev spaces of periodic functions
\begin{align*}
	H^{s}(\mathbb{T}) :  = \left\{u(x)=\sum_{n\in\Z}\widehat{u}(n) \frac{e^{inx}}{\sqrt{2\pi}} \in L^{2}(\mathbb{T}) \, : \, \overline{\widehat{u}(n)}=\widehat{u}(-n), \; \; \sum_{n\in\Z} |n|^{2s} |\widehat{u}(n)|^{2} < +\infty\right\},
\end{align*}
endowed with the norm
$$\|u\|_{H^{s}}:=\sqrt{\|u\|_{L^{2}}^{2}+\|u\|_{\dot{H}^{s}}^{2}}, \quad \|u\|_{\dot{H}^{s}}:=\|\L^{s}u\|_{L^{2}}=\left(\sum_{n\in\Z}|n|^{2s}|\widehat{u}(n)|^{2}\right)^{\frac{1}{2}}.$$
Recall that $H^{s}(\mathbb{T})$ is a Hilbert space with inner product
$$
(u,v)_{H^{s}}:=(u,v)_{L^{2}}+(u,v)_{\dot{H}^{s}}, \quad (u,v)_{\dot{H}^{s}}:=(\L^{\frac{s}{2}}u, \L^{\frac{s}{2}}u)_{L^2} = \sum_{n\in\Z}|n|^{2s} \widehat{u}(n) \overline{\widehat{v}(n)}.
$$
For each $s > 0$, we will denote by $H_{0}^{s}(\mathbb{T})$ the subspace of $H^{s}(\mathbb{T})$ consisted on odd functions. Note that $H^{s}_{0}(\mathbb{T})$ is a Banach space as it is a closed subspace of $H^{s}(\mathbb{T})$. Analogously, we define $L^{q}_{0}(\mathbb{T})\subset L^{q}(\mathbb{T})$ for each $q\geq 1$ and $\mc{C}^{r}_{0}(\mathbb{T})\subset \mc{C}^{r}(\mathbb{T})$ for each $r\geq 0$.

Moreover, given a pair $(U,V)$ of real Banach spaces, the space of linear bounded operators $T:U\to V$ is denoted by $L(U,V)$. We denote by $GL(U,V)$ the space of topological isomorphisms.
Given $T\in L(U,V)$, we denote by $N[T]$ and $R[T]$, the kernel and the range of $T$, respectively. Finally, $\Phi_{0}(U,V)$ stands for the set of Fredholm operators of index zero and $\Phi_{0}(U):=\Phi_{0}(U,U)$.

\section{The evolution problem}\label{Se2}
In this section we establish the following result:
\begin{theorem}
Let us fix $\varepsilon > 0$ and
\[
u_0\in H^3(\mathbb{T}).
\]
Then, there exists a unique globally defined solution
\[
u(x,t)\in \mc{C}([0,T],H^3(\mathbb{T})),\;\forall T>0.
\]
\end{theorem}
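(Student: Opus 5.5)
Our plan is the classical energy method: local existence through a regularized problem with uniform $H^3$ estimates, uniqueness from an $L^2$ estimate on the difference, and global existence from a priori bounds that rule out blow-up. The case $s>1$ matters because the dissipation $-\varepsilon\L^{s}$ has order strictly larger than one. This lets it absorb the linear growth term $\L^{r}$ ($r<s$), and it gives enough parabolic smoothing to control the higher norms once a low-order bound is known.

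\emph{Local existence and uniqueness.} We mollify the nonlinearity, $u_t+J_\delta\big((J_\delta u)(J_\delta u)_x\big)=\L^r u-\varepsilon\L^s u$, with $J_\delta$ a Fourier cutoff or mollifier. This is an ODE on $H^3(\TT)$ for each $\delta$ (the linear part generates a semigroup), so it has a local solution. We then derive the $\delta$-uniform estimate
\[
\frac{d}{dt}\|u\|_{H^3}^2+2\varepsilon\|u\|_{\dot H^{3+s/2}}^2\le C\|u_x\|_{L^\infty}\|u\|_{H^3}^2+2\|u\|_{\dot H^{3+r/2}}^2 .
\]
The nonlinear term is handled by integrating by parts: the top-order term $\int u\,\partial_x^4u\,\partial_x^3u=-\tfrac12\int u_x(\partial_x^3u)^2$ is fine, and the remaining terms are controlled by Sobolev embedding $\|u_x\|_{L^\infty}\lesssim\|u\|_{H^3}$. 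The term $\L^r$ is absorbed by interpolation, $\|u\|_{\dot H^{3+r/2}}^2\le \varepsilon\|u\|_{\dot H^{3+s/2}}^2+C_\varepsilon\|u\|_{L^2}^2$, which holds since $r<s$. This yields $\frac{d}{dt}\|u\|_{H^3}^2\le C(1+\|u\|_{H^3}^3)$ and a time $T_*$ independent of $\delta$. Compactness (Aubin--Lions) lets us pass to the limit. Continuity in time with values in $H^3$ follows from the extra dissipative regularity $L^2_tH^{3+s/2}$ together with the standard weak continuity plus norm continuity argument. For uniqueness, the difference $w=u-v$ satisfies $\frac{d}{dt}\|w\|_{L^2}^2\le C(\|u_x\|_{L^\infty}+\|v_x\|_{L^\infty}+C_\varepsilon)\|w\|_{L^2}^2$, and Gronwall gives $w=0$.

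\emph{Global existence.} By the blow-up criterion implicit in the local estimate, it suffices to bound $\|u(t)\|_{H^3}$ on every finite interval. We proceed in three steps.

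\begin{enumerate}
\item \emph{$L^2$ bound.} Since $\int u^2u_x=0$, the $L^2$ energy identity together with the interpolation above gives $\frac{d}{dt}\|u\|_{L^2}^2\le C_\varepsilon\|u\|_{L^2}^2$. Hence $\|u(t)\|_{L^2}\le e^{Ct}\|u_0\|_{L^2}$ and $\int_0^T\|u\|_{\dot H^{s/2}}^2<\infty$.
\item \emph{Intermediate norm.} We bootstrap using the dissipation. At level $\dot H^{k}$, the nonlinear term is bounded by $\|u\|_{L^\infty}\|u\|_{\dot H^{k+1-s/2}}\|u\|_{\dot H^{k+s/2}}$ (commutator estimate, or directly at $k=1$ by $\int uu_xu_{xx}=-\tfrac12\int u_x^3$). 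We interpolate it against the dissipation $\varepsilon\|u\|^2_{\dot H^{k+s/2}}$ and the known lower norms.
\item \emph{Gronwall at the top.} Once $\int_0^T\|u_x\|_{L^\infty}dt<\infty$, the $H^3$ inequality above closes by Gronwall.
\end{enumerate}

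The main obstacle is the first nontrivial bootstrap step, namely controlling $\|u\|_{L^\infty}$ or $\|u\|_{\dot H^{1}}$ from the $L^2$ bound when $s$ is only slightly larger than $1$. In this near-critical regime the dissipation barely beats the Burgers nonlinearity. Our first attempt is the maximum principle for the fractional operator: at a point $x_t$ where $u$ attains its maximum, the transport term vanishes and $-\varepsilon\L^s u\le 0$. However, $\L^r u$ can be positive there, so a direct pointwise bound does not follow. To handle this, we combine the $L^2$ bound with a pointwise estimate of the form $\L^r u(x_t)\le C\|u\|_{L^2}+\tfrac{\varepsilon}{2}\L^s u(x_t)$, obtained from the kernel (Córdoba--Córdoba type) representation, split into near and far regions. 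This gives $\frac{d}{dt}\|u\|_{L^\infty}\le C_\varepsilon\|u\|_{L^2}$, which is the standard route for nonlocal Kuramoto--Sivashinsky equations. With $u\in L^\infty_{t,x}$ in hand, the $\dot H^{1}$ estimate gives $\frac{d}{dt}\|u_x\|_{L^2}^2+\varepsilon\|u\|^2_{\dot H^{1+s/2}}\le C\|u\|_{L^\infty}\|u_x\|_{L^2}\|u\|_{\dot H^{2-\cdot}}$ plus lower-order terms. The right-hand side is absorbed by Young's inequality because $s>1$. Iterating the same scheme up to $\dot H^{3}$ finishes the proof.
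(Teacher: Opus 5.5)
\paragraph{Comparison with the paper.} The paper only sketches this result by deferring to \cite{granero2015nonlocal}, where the dissipation has order $1<s\le 2$. Your outline is essentially a reconstruction of that argument: mollified local theory, the $L^2$ energy identity, an $L^\infty$ bound from a pointwise maximum principle, and then the $\dot H^1$ and $H^3$ Gronwall estimates.

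\paragraph{The gap for $s>2$.} As a proof for every $s>1$, the outline has a concrete gap. The step ``at a maximum point $x_t$ one has $-\varepsilon\Lambda^s u(x_t)\le 0$'' holds only for $0<s\le 2$. The same restriction applies to the C\'ordoba--C\'ordoba kernel representation behind your pointwise bound, and to the L\'evy-kernel form of $\Lambda^r$. For $s>2$ the claim is false. Take $3^{-s}<b<\frac19$ and the odd function $u=\sin x+b\sin 3x$. Then $u_x=\cos x\,(1-9b+12b\cos^2x)$, so the global maximum of $u$ is at $x=\pi/2$, yet $\Lambda^s u(\pi/2)=1-3^s b<0$. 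This range contains the classical Kuramoto--Sivashinsky case $r=2$, $s=4$ mentioned in the introduction.

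\paragraph{How to repair it.} Split into cases. For $s>3/2$ no $L^\infty$ bound is needed. By $\dot H^{1/6}\hookrightarrow L^3$ and interpolation between $L^2$ and $\dot H^{1+s/2}$,
\[
\Big|\int_{\TT}u_x^3\,dx\Big|\le C\|u\|_{\dot H^{7/6}}^3\le C\|u\|_{L^2}^{3-\frac{7}{2+s}}\|u\|_{\dot H^{1+s/2}}^{\frac{7}{2+s}},
\]
and $\frac{7}{2+s}<2$ exactly when $s>3/2$. Young's inequality and your exponential $L^2$ bound then close the $\dot H^1$ estimate. This gives $u\in L^2(0,T;\dot H^{1+s/2})$, hence $\int_0^T\|u_x\|_{L^\infty}\,dt<\infty$, and your Step 3 applies. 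The maximum principle is then needed only for $1<s\le 3/2$, where it is legitimate.

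\paragraph{Smaller points for $1<s<2$.}
\begin{enumerate}
\item Your parenthetical ``directly at $k=1$ by $\int uu_xu_{xx}=-\frac12\int u_x^3$'' does not close via H\"older. It produces $\|u\|_{L^\infty}\|u\|_{\dot H^1}\|u\|_{\dot H^2}$, and $\dot H^2$ lies above the dissipative level $\dot H^{1+s/2}$. What actually gives your trilinear bound is the $L^\infty$-endpoint Gagliardo--Nirenberg inequality $\|u_x\|_{L^3}^3\le C\|u\|_{L^\infty}\|u\|_{\dot H^{3/2}}^2$. Combine it with $\|u\|_{\dot H^{3/2}}^2\le\|u\|_{\dot H^{2-s/2}}\|u\|_{\dot H^{1+s/2}}$ (Cauchy--Schwarz in Fourier); Young's inequality then works precisely because $s>1$.
\item A plain near/far splitting of the kernel yields only $\Lambda^r u(x_t)\le\frac{\varepsilon}{2}\Lambda^s u(x_t)+C(\|u\|_{L^\infty}+\|u\|_{L^2})$, because the far region contributes a multiple of $u(x_t)$. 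Getting $C\|u\|_{L^2}$ alone requires a nonlinear lower bound for $\Lambda^s u$ at the maximum. The weaker form already suffices via Gronwall, so this does not break the argument.
\item For $r\le 0$ there is no L\'evy-kernel form of $\Lambda^r$, but $|\Lambda^r u|\le C\|u\|_{L^\infty}$ holds directly. For $r<0$, $\Lambda^r$ is convolution with an integrable kernel, and $\Lambda^0u=u-\bar u$.
\end{enumerate}
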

\begin{proof}
Although this specific result is not contained in the already published literature, it can be obtained from somehow minor mofications of previous results. For this reason we only sketch the proof. The global existence for $0\leq r<1$ is contained in \cite{granero2015nonlocal}. Using the very same ideas one can recover the case $-1\leq r<0$.
\end{proof}

Once the global existence of solution has been established, we observe that for $0<\varepsilon<1$, the linear contribution coming from $\Lambda^r u$ makes the homogeneous steady state $U(x)\equiv 0 $ unstable for large scale perturbations. In fact, one can prove that
$$
\frac{1}{2}\frac{d}{dt}\|u(t)\|_{L^2}^2+\varepsilon\|u(t)\|_{H^{s/2}}^2=\|u(t)\|_{H^{r/2}}^2,
$$
which for initial data with compact support in low frequencies implies
$$
\frac{d}{dt}\|u(t)\|_{L^2}^2\geq0.
$$
Thus, the question of what is the behavior of such globally defined initial data remains.

\section{Existence of non-trivial steady states using local bifurcation}\label{Se3}
Here we recall the celebrated Crandall--Rabinowitz bifurcation theorem and some of its consequences, and we use it to analyze the local structure of the steady-state bifurcation diagram near the bifurcation points shown in Figure \ref{F1}. This theorem was stated and proved in \cite{CR, CRex}. Let $(U,V)$ be a pair of real Banach spaces and consider a map $\mf{F}:\R_{>0}\times U\to V$ of class $\mc{C}^{r}$, $r\geq 2$, satisfying the following assumptions:
\begin{enumerate}
	\item[(F1)] $\mathfrak{F}(\e,0)=0$ for every $\e>0$;
	\item[(F2)] $\mf{L}(\e):=\partial_{u}\mathfrak{F}(\e,0)\in\Phi_{0}(U,V)$ for each $\e>0$, \emph{i.e.} it is a Fredholm operator of zero index;
	\item[(F3)] $N[\mf{L}(\e_{0})]=\spann\{\varphi\}$ for some $\varphi\in U\backslash\{0\}$ and $\e_{0}>0$.
\end{enumerate}
The theorem reads as follows. For the involved notation and concepts we refer the reader to Appendix \ref{A1}.
\begin{theorem}
	\label{Th3.1} 
	Let $r\geq 2$, $\mathfrak{F}\in\mc{C}^{r}(\R_{>0}\times U,V)$ be a map satisfying conditions {\rm (F1)--(F3)} and $\e_{0}\in\Sigma(\mf{L})$ be a $1$-transversal eigenvalue of $\mf{L}$, that is, 
	$$\mf{L}_{1}(N[\mf{L}_{0}])\oplus R[\mf{L}_{0}]=V.$$
	Let $Y\subset U$ be a subspace	such that
	$N[\mathfrak{L}_0] \oplus Y = U$.
	Then, there exist $\d>0$ and two maps of class $\mc{C}^{r-1}$,
	$$
	\Omega: (-\d,\d) \longrightarrow \R_{>0}, \qquad \G: (-\d,\d)\longrightarrow Y,
	$$
	such that $\Omega(0)=\e_0$, $\G(0)=0$,
	and for each $s\in(-\d,\d)$,
	\begin{equation*}
		\mathfrak{F}(\Omega(s),u(s))=0, \quad u(s):= s(\v+\G(s)).
	\end{equation*}
	Moreover,  $\r >0$ exists such that if $\mathfrak{F}(\e,u)=0$ and
	$(\e,u)\in B_\r(\e_0,0)$, then either $u = 0$ or
	$(\e,u)=(\Omega(s),u(s))$ for some $s\in(-\d,\d)$.
\end{theorem}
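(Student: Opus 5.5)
The plan is the classical Lyapunov--Schmidt reduction followed by the implicit function theorem applied to a rescaled equation. Write $\mf{L}_0:=\mf{L}(\e_0)$ and $\mf{L}_1:=\partial_\e\mf{L}(\e_0)=\partial_\e\partial_u\mf{F}(\e_0,0)$. By (F2)--(F3), $R[\mf{L}_0]$ is closed of codimension one. The transversality hypothesis says that $\mf{L}_1\v\notin R[\mf{L}_0]$ and $V=\spann\{\mf{L}_1\v\}\oplus R[\mf{L}_0]$. I will use the fixed complement $Y$ with $U=\spann\{\v\}\oplus Y$. Since $N[\mf{L}_0]\cap Y=\{0\}$, the restriction $\mf{L}_0|_Y:Y\to R[\mf{L}_0]$ is a bounded bijection onto a closed subspace, hence an isomorphism by the open mapping theorem.

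Next I remove the trivial branch by a blow-up. Using (F1), define
\[
\mf{G}(s,\e,y):=\int_0^1\partial_u\mf{F}\bigl(\e,ts(\v+y)\bigr)(\v+y)\,dt .
\]
For $s\neq 0$ this equals $s^{-1}\mf{F}(\e,s(\v+y))$, and $\mf{G}(0,\e,y)=\mf{L}(\e)(\v+y)$. Since $\mf{F}\in\mc{C}^r$, the map $\mf{G}$ is of class $\mc{C}^{r-1}$ on a neighbourhood of $(0,\e_0,0)$ in $\R\times\R_{>0}\times Y$; this is the only place where $r\geq 2$ is used. We have $\mf{G}(0,\e_0,0)=\mf{L}_0\v=0$, and
\[
\partial_{(\e,y)}\mf{G}(0,\e_0,0)[(\tau,z)]=\tau\,\mf{L}_1\v+\mf{L}_0 z .
\]
By the direct sum decomposition of $V$ and the isomorphism $\mf{L}_0|_Y$, this operator $\R\times Y\to V$ is bijective and bounded, hence in $GL(\R\times Y,V)$.

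The implicit function theorem then gives $\d>0$ and $\mc{C}^{r-1}$ maps $\Omega,\G$ on $(-\d,\d)$ with $\Omega(0)=\e_0$, $\G(0)=0$, and $\mf{G}(s,\Omega(s),\G(s))=0$. It also gives uniqueness of the zeros of $\mf{G}$ in a neighbourhood $|s|<\d$, $|\e-\e_0|<\eta$, $\|y\|<\eta$. Multiplying by $s$ yields $\mf{F}(\Omega(s),s(\v+\G(s)))=0$, which proves the existence part.

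The local uniqueness statement is the main obstacle. Let $P$ be the projection onto $\spann\{\v\}$ along $Y$. Take $(\e,u)$ with $u\neq0$ and $\mf{F}(\e,u)=0$, and write $u=s\v+w$ with $w\in Y$. The key step is to show that $\|w\|\le C\rho'|s|$ when $\|u\|+|\e-\e_0|<\rho'$ is small, so that $s\neq0$ and $y:=w/s$ is small. To get this, project the equation onto $R[\mf{L}_0]$ along $\spann\{\mf{L}_1\v\}$ with projection $Q$:
\[
0=Q\mf{F}(\e,u)=\mf{L}_0 w+Q\bigl[\mf{L}(\e)-\mf{L}_0\bigr]u+Q\,O(\|u\|^2).
\]
The middle term is $O(|\e-\e_0|\,\|u\|)$, and the remainder is bounded by $\|u\|$ times a modulus tending to $0$, since $\mf{F}\in\mc{C}^1$. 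Inverting $\mf{L}_0|_Y$ gives $\|w\|\le o(1)(|s|+\|w\|)$, hence $\|w\|\le o(1)|s|$ for $\rho'$ small. In particular $s\neq 0$, because otherwise $w=0$ and $u=0$. Then $(s,\e,w/s)$ is a zero of $\mf{G}$ inside the uniqueness neighbourhood, so $\e=\Omega(s)$ and $w/s=\G(s)$. Choosing $\r$ small enough that $|s|\le\|P\|\,\|u\|<\d$ completes the proof.
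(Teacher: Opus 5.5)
Your proposal is correct. The paper gives no proof of this theorem and simply quotes it from Crandall and Rabinowitz. Your argument is essentially their classical proof: the blow-up $\mathfrak{G}(s,\varepsilon,y)=s^{-1}\mathfrak{F}(\varepsilon,s(\varphi+y))$, the implicit function theorem at $(0,\varepsilon_0,0)$ using transversality, and the estimate $\|w\|=o(|s|)$, obtained by projecting onto $R[\mathfrak{L}_0]$, for local uniqueness. As in their setting, you implicitly take $Y$ to be a closed complement, which is the intended reading of the statement and holds for the $Y_k$ used later in the paper.
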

Furthermore, we can also compute the bifurcation direction of the curve $s\mapsto (\O(s),u(s))$ in terms of the derivatives of $\mf{F}$. Let $\varphi^{\ast}\in V^{\ast}$ such that
$$
R[\mf{L}(\e_0)]=\{v\in V \; : \; \langle v, \varphi^{\ast}\rangle =0 \},
$$
where $\langle \cdot, \cdot \rangle : V\times V^{\ast}\to\R$ denotes the duality pairing on $V$. The following result can be found, for instance, in \cite{Ki}.

\begin{theorem}
	\label{Th3.2}
	Under the assumptions of Theorem \ref{Th3.1}, the following identity holds:
	\begin{align*}
		\dot\Omega(0)=-\frac{1}{2}\frac{\langle \partial^{2}_{uu}\mf{F}(\e_{0},0)[\varphi,\varphi],\varphi^{\ast}\rangle}{\langle \partial^{2}_{\e u}\mf{F}(\e_{0},0)[\varphi],\varphi^{\ast}\rangle}.
	\end{align*}
	Moreover, if $r\geq 3$ and $\partial^{2}_{uu}\mf{F}(\e_{0},0)[\varphi,\varphi]\in R[\mf{L}_{0}]$, then
	\begin{equation*}
		\ddot{\Omega}(0)=-\frac{1}{3}\frac{\langle\partial^{3}_{uuu}\mf{F}(\e_{0},0)[\varphi,\varphi,\varphi],\varphi^{\ast}\rangle+3\langle \partial^{2}_{uu}\mf{F}(\e_0,0)[\varphi,\phi],\varphi^{\ast}\rangle}{\langle \partial^{2}_{\e u}\mf{F}(\e_{0},0)[\varphi],\varphi^{\ast}\rangle},
	\end{equation*}
	where $\phi\in U$ is any vector satisfying $\partial^{2}_{uu}\mf{F}(\e_0,0)[\varphi,\varphi]+\partial_{u}\mf{F}(\e_0,0)[\phi]=0$.
\end{theorem}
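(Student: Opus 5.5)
Starting from the Crandall--Rabinowitz curve of Theorem \ref{Th3.1}, I will differentiate the identity $\mf{F}(\Omega(s),u(s))=0$, with $u(s)=s(\varphi+\G(s))$, with respect to $s$ at $s=0$. Then I pair with $\varphi^{\ast}$, which annihilates $R[\mf{L}_0]$, to eliminate the unknown derivatives of $\G$. Since $\G(0)=0$, Taylor expansion gives $\dot u(0)=\varphi$ and $\ddot u(0)=2\dot\G(0)$. Because $\mf{F}(\e,0)=0$ for all $\e$ by (F1), every pure $\e$-derivative $\partial^k_{\e}\mf{F}(\e,0)$ vanishes.

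\emph{First order.} The first derivative of $\mf{F}(\Omega(s),u(s))$ is $\partial_\e\mf{F}\,\dot\Omega+\partial_u\mf{F}[\dot u]$. At $s=0$ this gives $\mf{L}_0\varphi=0$, which holds trivially. Differentiating once more and evaluating at $s=0$, the terms involving $\partial_\e\mf{F}$ and $\partial^2_{\e\e}\mf{F}$ drop out. What remains is
\[
2\dot\Omega(0)\,\partial^2_{\e u}\mf{F}(\e_0,0)[\varphi]+\partial^2_{uu}\mf{F}(\e_0,0)[\varphi,\varphi]+\mf{L}_0[\ddot u(0)]=0.
\]
Pairing with $\varphi^{\ast}$ removes $\mf{L}_0[\ddot u(0)]$. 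The denominator $\langle\partial^2_{\e u}\mf{F}(\e_0,0)[\varphi],\varphi^{\ast}\rangle$ is nonzero precisely by $1$-transversality: $\mf{L}_1\varphi=\partial^2_{\e u}\mf{F}(\e_0,0)[\varphi]\notin R[\mf{L}_0]$. Solving for $\dot\Omega(0)$ yields the first formula.

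\emph{Second order.} When $\partial^2_{uu}\mf{F}[\varphi,\varphi]\in R[\mf{L}_0]$, the first formula gives $\dot\Omega(0)=0$. The second-order identity then reduces to $\mf{L}_0[\ddot u(0)]=-\partial^2_{uu}\mf{F}[\varphi,\varphi]$. Hence $\ddot u(0)=\phi+c\varphi$ for any $\phi$ as in the statement, since $N[\mf{L}_0]=\spann\{\varphi\}$. Next I differentiate three times, using $r\ge3$, and evaluate at $s=0$, using $\dot\Omega(0)=0$ and the vanishing of pure $\e$-derivatives. The surviving terms are:
\[
\ddot\Omega(0)\,\partial^2_{\e u}\mf{F}[\varphi]\ (\text{with multiplicity }3/2\cdot 2=3\text{ after bookkeeping}),\quad \partial^3_{uuu}\mf{F}[\varphi,\varphi,\varphi],\quad 3\,\partial^2_{uu}\mf{F}[\varphi,\ddot u(0)],\quad \mf{L}_0[\dddot u(0)].
\]

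\emph{Main obstacle.} I expect the main difficulty to be this combinatorial bookkeeping of the third derivative via Faà di Bruno. The point is to check that the coefficient of $\ddot\Omega(0)\,\mf{L}_1\varphi$ is exactly $3$, and that the mixed terms $\partial^3_{\e\e u}\mf{F}$, $\partial^3_{\e uu}\mf{F}$ and $\partial^2_{\e u}\mf{F}[\ddot u]$ all carry a factor of $\dot\Omega(0)$, so they vanish.

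\emph{Conclusion.} Pairing with $\varphi^{\ast}$ kills $\mf{L}_0[\dddot u(0)]$. The extra piece $c\,\partial^2_{uu}\mf{F}[\varphi,\varphi]$ coming from $\ddot u(0)=\phi+c\varphi$ also pairs to zero, because $\partial^2_{uu}\mf{F}[\varphi,\varphi]\in R[\mf{L}_0]$. This shows the result is independent of the choice of $\phi$. Dividing by $3\langle\mf{L}_1\varphi,\varphi^{\ast}\rangle\neq0$ gives the stated formula for $\ddot\Omega(0)$.
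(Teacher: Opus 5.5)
The paper gives no proof of this statement; it only quotes it from \cite{Ki}. Your derivation is correct and self-contained. The bookkeeping you flag as the main obstacle is immediate. Write $g(s)=\mathfrak F(\gamma(s))$ with $\gamma=(\Omega,u)$, so that $g'''=D^3\mathfrak F[\gamma',\gamma',\gamma']+3D^2\mathfrak F[\gamma',\gamma'']+D\mathfrak F[\gamma''']$. Once $\dot\Omega(0)=0$ we have $\gamma'(0)=(0,\varphi)$, and all pure $\varepsilon$-derivatives vanish on $u=0$, hence
\[
0=g'''(0)=\partial^3_{uuu}\mathfrak F(\varepsilon_0,0)[\varphi,\varphi,\varphi]+3\ddot\Omega(0)\,\mathfrak L_1\varphi+3\,\partial^2_{uu}\mathfrak F(\varepsilon_0,0)[\varphi,\ddot u(0)]+\mathfrak L_0\dddot u(0).
\]
So the factor $3$ in front of $\ddot\Omega(0)\,\mathfrak L_1\varphi$ comes straight from the middle term; the ``$\tfrac32\cdot 2$'' in your sketch is not needed.

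Compared with the standard derivation in \cite{Ki}, your route is more direct. That derivation implicitly differentiates the scalar Lyapunov--Schmidt bifurcation function $s^{-1}\langle \mathfrak F(\varepsilon,s\varphi+\psi(s\varphi,\varepsilon)),\varphi^\ast\rangle=0$, so it must first compute derivatives of the reduction map $\psi$. Your argument uses only the conclusion of Theorem~\ref{Th3.1}. The role of $\partial^2_{ss}\psi$ is played by $\ddot u(0)=2\dot\Gamma(0)$, and your observation $\ddot u(0)\in\phi+\spann\{\varphi\}$ makes the independence from the choice of $\phi$ transparent.

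One small technical point concerns minimal smoothness: $r=2$ for the first formula and $r=3$ for the second. There Theorem~\ref{Th3.1} only gives $\Omega\in\mathcal C^{r-1}$, so $\ddot\Omega$ (resp.\ $\dddot\Omega$) need not exist. However, it would only enter through the product with $\partial_\varepsilon\mathfrak F(\Omega(s),u(s))$, which is $\mathcal C^1$ in $s$ and vanishes at $s=0$. Hence the required derivative of $g$ at $s=0$ still exists and your identities hold. For the $\mathcal C^\infty$ map used in the paper this point is moot.
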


In the next of this section we are going to check whether the previous abstract theorems can be applied to our setting. Let us consider the time independent analog of \eqref{Eq1.1}
\begin{equation}
	\label{Eq3.1}
	uu_{x}= \L^{r}u-\varepsilon \L^{s}u, \quad x\in \mathbb{T}, \;\; u\in H^{s}_{0}(\mathbb{T}),
\end{equation}
where $r<s$ and $s > 1$. This is a nonlinear and nonlocal elliptic equation. A \textit{solution} of \eqref{Eq3.1} is a function $u\in H^{s}_{0}(\mathbb{T})$ that satisfies \eqref{Eq3.1} pointwise almost everywhere. The solutions of the equation \eqref{Eq3.1} can be rewritten as the zeros of the following nonlinear operator
\begin{equation}
	\label{Eq3.2}
	\mf{F}:\R_{>0}\times H_{0}^{s}(\mathbb{T})\longrightarrow L_{0}^{2}(\mathbb{T}), \quad \mf{F}(\e,u)=\L^{r}u-\varepsilon \L^{s}u-u u_{x}.
\end{equation}
In fact, the previous nonlinear operator satisfies the following result.
\begin{lemma}\label{Le3.3}
	Let $(\e, u)\in \R_{>0}\times H_{0}^{s}(\mathbb{T})$. Then, 
	\begin{equation}
		\label{Eq3.3}
		\mf{F}(\e,u)=\L^{r}u-\varepsilon \L^{s}u-u u_{x}\in L^{2}_{0}(\mathbb{T}).
	\end{equation}
	Now we have to check that the hypotheses of Theorem \ref{Th3.1}	hold for our operator $\mf{F}$. It is clear that $\mf{F}(\varepsilon,u)$ satisfies {\rm(F1)}. We continue noticing that the operator $\mf{F}$ satisfies $\mf{F}\in\mc{C}^{\infty}(\R_{>0}\times  H_{0}^{s}(\mathbb{T}), L_{0}^{2}(\mathbb{T}))$. Moreover, its derivatives are given by
	\begin{itemize}
		\item[{\rm (i)}] For all $(\e,u)\in\R_{>0}\times  H_{0}^{s}(\mathbb{T})$,
		\begin{align}
			\label{Eq3.4}
			& \partial_{u}\mf{F}(\e,u): H_{0}^{s}(\mathbb{T}) \longrightarrow L_{0}^{2}(\mathbb{T}), \\
			& \partial_{u}\mf{F}(\e,u)[v]=\L^{r}v-\e \L^{s}v - (u_{x}v+uv_{x}). \nonumber
		\end{align}
		\item[{\rm (ii)}] For all $(\e,u)\in\R_{>0}\times  H_{0}^{s}(\mathbb{T})$, 
		\begin{align*}
			& \partial_{uu}^{2}\mf{F}(\l,u):H_{0}^{s}(\mathbb{T})\times H_{0}^{s}(\mathbb{T}) \longrightarrow L_{0}^{2}(\mathbb{T}), \\
			& \partial_{uu}^{2}\mf{F}(\e,u)[v_1,v_2] = -((v_{1})_{x}v_{2}+(v_{2})_{x}v_{1}).
		\end{align*}
		\item[{\rm (iii)}] Let $k\geq 3$. Then, for all $(\e,u)\in\R_{>0}\times  H_{0}^{s}(\mathbb{T})$, 
		\begin{align*}
			& \partial_{u}^{k}\mf{F}(\e,u):H_{0}^{s}(\mathbb{T})\times \overset{(k)}{\cdots} \times H_{0}^{s}(\mathbb{T}) \longrightarrow L_{0}^{2}(\mathbb{T}), \\
			& \partial_{u}^{k}\mf{F}(\e,u)[v_1,\dots,v_{k}] =0.
		\end{align*}
	\end{itemize}
\end{lemma}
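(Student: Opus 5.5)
The plan is to prove three things: that $\mf{F}(\e,u)$ lies in $L^2_0(\mathbb{T})$, that $\mf{F}$ is smooth, and that its derivatives are the ones listed in (i)--(iii).

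\textbf{Well-definedness.} Fix $u\in H^s_0(\mathbb{T})$ with $s>1$.
\begin{itemize}
\item[(a)] By Plancherel, $\|\L^{s}u\|_{L^2}=\|u\|_{\dot H^{s}}$. Since $r<s$, and $\widehat u(0)=0$ for odd $u$, we also get $\|\L^r u\|_{L^2}\le \|u\|_{\dot H^{s}}$. This uses $|n|^{2r}\le |n|^{2s}$ for $|n|\ge1$, and it holds even when $r<0$.
\item[(b)] The multipliers $|n|^\alpha$ are even in $n$. Hence $\L^\alpha$ maps odd functions to odd functions: oddness means $\widehat u(-n)=-\widehat u(n)$, and this property is preserved.
\item[(c)] Since $s>1>1/2$, the Sobolev embedding gives $H^s(\mathbb{T})\hookrightarrow L^\infty(\mathbb{T})$. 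We also have $u_x\in H^{s-1}\subset L^2$. Therefore $\|uu_x\|_{L^2}\le \|u\|_{L^\infty}\|u_x\|_{L^2}\le C\|u\|_{H^s}^2$.
\item[(d)] If $u$ is odd, then $u_x$ is even, so $uu_x$ is odd.
\end{itemize}
Combining (a)--(d) shows $\mf{F}(\e,u)\in L^2_0(\mathbb{T})$.

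\textbf{Smoothness.} Write $\mf{F}(\e,u)=A u-\e B u - Q(u,u)$, where $A=\L^r$ and $B=\L^s$ are in $L(H^s_0,L^2_0)$, and $Q(v_1,v_2)=\tfrac12\big((v_1)_xv_2+(v_2)_xv_1\big)$. The map $Q$ is a symmetric bilinear form, and it is bounded $H^s_0\times H^s_0\to L^2_0$ by the estimate in (c):
\[
\|(v_1)_xv_2\|_{L^2}\le C\|v_1\|_{H^s}\|v_2\|_{H^s}.
\]
The parity of $Q(v_1,v_2)$ follows as in (d). Each term is then a standard object:
\begin{itemize}
\item $u\mapsto Au$ is bounded linear;
\item $(\e,u)\mapsto \e Bu$ is a bounded bilinear map on $\R\times H^s_0$;
\item $u\mapsto Q(u,u)$ is a continuous quadratic form.
\end{itemize}
Continuous polynomial maps between Banach spaces are $\mc C^\infty$. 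Hence $\mf{F}\in\mc{C}^\infty(\R_{>0}\times H^s_0,L^2_0)$. (F1) holds because every term vanishes at $u=0$.

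\textbf{Derivatives.} For a bounded symmetric bilinear $Q$, the map $q(u)=Q(u,u)$ has
\[
Dq(u)[v]=2Q(u,v)=u_xv+uv_x,\qquad D^2q(u)[v_1,v_2]=2Q(v_1,v_2),
\]
and all higher derivatives vanish. The linear terms contribute $\L^rv-\e\L^sv$ to the first derivative and nothing to higher derivatives. This gives (i), (ii) and (iii).

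The only step needing care is the bilinear estimate for $Q$. It relies on $s>1$ through two facts: the $L^\infty$ embedding (which needs $s>1/2$) and $v_x\in L^2$ (which needs $s\ge1$).
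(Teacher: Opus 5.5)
Your proof is correct. It uses the same analytic input as the paper but organizes the argument differently.

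The paper's route is hands-on:
\begin{itemize}
\item it skips the mapping property $\mf{F}(\e,u)\in L^2_0(\mathbb{T})$ as straightforward;
\item it proves continuity of $\mf{F}$ by a direct sequential estimate;
\item it proves differentiability in $u$ by computing the remainder $\mf{F}(\e,u+v)-\mf{F}(\e,u)-\partial_u\mf{F}(\e,u)[v]=-vv_x$ and bounding it by $\|v\|_{L^\infty}\|v_x\|_{L^2}\le C\|v\|_{H^s}^2$;
\item it then asserts that continuity of $\partial_u\mf{F}$ and all higher derivatives follow ``using the same ideas.''
\end{itemize}

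You isolate that single estimate as boundedness of the symmetric bilinear map $Q(v_1,v_2)=\tfrac12\big((v_1)_xv_2+(v_2)_xv_1\big)$ from $H^s_0\times H^s_0$ to $L^2_0$. You then observe that $\mf{F}$ is a continuous polynomial map: a bounded linear term, the bounded bilinear map $(\e,u)\mapsto \e\L^s u$, and the quadratic form $u\mapsto Q(u,u)$. The general fact that such maps are $\mc{C}^\infty$, with derivatives given by polarization, finishes the argument.

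What your route buys is that several things come out at once and rigorously, rather than being sketched:
\begin{itemize}
\item joint smoothness in $(\e,u)$;
\item the formulas in (i) and (ii);
\item the vanishing of all derivatives of order $k\ge 3$ in (iii).
\end{itemize}
You also actually verify the mapping property into $L^2_0$, including preservation of oddness by the even multipliers $|n|^\alpha$, which the paper omits. The paper's argument is more elementary in that it avoids the calculus of multilinear maps, but its higher-order claims are only indicated.
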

\begin{proof}
	The proof of the first claim is straightforward and, for the sake of brevity, we omit it. 	Let us now prove the continuity of the operator $\mf{F}$. Let $\{(\e_{n},u_{n})\}_{n\in\N}\subset \R_{>0}\times H_{0}^{s}(\mathbb{T})$ be a sequence satisfying 
	$$\lim_{n\to+\infty}\e_n = \e_0>0 \quad \text{ and } \quad
	\lim_{n\to+\infty}u_{n}=u_{0} \quad \text{ in } \; \; H_{0}^{s}(\mathbb{T}).$$
	Choose $N\in\N$ such that $|\e_{n}-\e_{0}|<1$ and $\|u_{n}-u_{0}\|_{H^{s}}<1$ for each $n\geq N$. In particular, this implies that
	\begin{equation}
		\label{Eq3.5}
		|\e_{n}|<1+|\e_0| \;\; \text{and} \;\; \|u_{n}\|_{H^{s}}<1+\|u_{0}\|_{H^{s}}, \quad n\geq N.
	\end{equation}
	We rewrite the difference of the corresponding operator as
	\begin{align*}
		& \mf{F}(\e_{n},u_{n})-\mf{F}(\e_{0},u_{0}) \\ 
		&=\L^{r}(u_{n}-u_{0})-(\e_{n}-\e_{0})\L^{s}u_{0} -\e_{n}\L^{s}(u_{n}-u_{0})- (u_{n}-u_{0})(u_{n})_{x}-((u_{n})_{x}-(u_0)_{x})u_0. \nonumber
	\end{align*}
	Fixing $n\geq N$ and taking the $L^{2}$-norm, we obtain
	\begin{align*}
		\|\mf{F}(\e_{n},u_{n})-\mf{F}(\e_{0},u_{0})\|_{L^{2}} 
		\leq C \left( \|u_{n}-u_{0}\|_{H^{s}}+|\e_{n}-\e_{0}| \right),
	\end{align*}
	where we have used inequality \eqref{Eq3.5}. Therefore,
	$$\lim_{n\to+\infty}\|\mf{F}(\e_{n},u_{n})-\mf{F}(\e_{0},u_{0})\|_{L^{2}}=0.$$
	Hence, this implies that $\mf{F}$ is continuous.
	\par Let us prove that $\mf{F}\in\mc{C}^{\infty}$. It is enough to prove $\mf{F}\in\mc{C}^{1}$ as the proof of the higher differentiability relies in the same techniques. We start by proving \eqref{Eq3.4}. Let $(\e,u)\in\R_{>0}\times H^{s}_{0}(\mathbb{T})$ and $v\in H^{s}_{0}(\mathbb{T})$. A simple computation yields
	\begin{align*}
		\mf{F}(\e,u+v)-\mf{F}(\e,u)- \L^{r}v+\e \L^{s}v+(u_{x}v+uv_{x}) = -vv_{x}.
	\end{align*}
	Consequently, a direct bound on the $L^{2}$-norm yields
	\begin{align*}
		\|\mf{F}(\e,u+v)-\mf{F}(\e,u)- \L^{r}v+\e \L^{s}v+(u_{x}v+uv_{x}) \|_{L^{2}} = \|vv_{x}\|_{L^{2}}\leq \|v\|_{L^{\infty}}\|v_{x}\|_{L^{2}}\leq C \|v\|_{H^{s}}^{2}.
	\end{align*}
	Then, by the definition of differentiability,
	\begin{align*}
		\lim_{v\to 0}\frac{\|\mf{F}(\e,u+v)-\mf{F}(\e,u)- \L^{r}v+\e \L^{s}v+(u_{x}v+uv_{x}) \|_{L^{2}}}{\|v\|_{H^{s}}} 
		\leq C \lim_{v\to 0}\|v\|_{H^{s}}=0.
	\end{align*}
	This proves \eqref{Eq3.4}. The continuity of $\partial_{u}\mf{F} : \R_{>0}\times H^{s}_{0}(\mathbb{T})\to L(H^{s}_{0}(\mathbb{T}), L^{2}_{0}(\mathbb{T}))$ is proven in the same way we proved the continuity of $\mf{F}$. We conclude the result using the same ideas as before.
\end{proof}

The next result is of capital importance in order to apply bifurcation theory for Fredholm operators. It states that the linearization of $\mf{F}$ with respect to the variable $u$ at every point is a Fredholm operator of index zero.

\begin{lemma}
	For each $(\varepsilon,u)\in\R_{>0}\times H_{0}^{s}(\mathbb{T})$, the linear operator
	\begin{equation*}
		\partial_{u}\mathfrak{F}(\varepsilon,u): H_{0}^{s}(\mathbb{T}) \longrightarrow L_{0}^{2}(\mathbb{T}), 
		\qquad 
		\partial_{u}\mathfrak{F}(\varepsilon,u)[v]
		=
		\L^{r}v-\varepsilon \L^{s}v -(u v_{x}+v u_{x}),
	\end{equation*}
	is Fredholm of index zero.
\end{lemma}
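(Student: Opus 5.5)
Split the operator as $\partial_u\mathfrak{F}(\varepsilon,u)=A_\varepsilon+K_u$, with
\[
A_\varepsilon v:=-\varepsilon\Lambda^{s}v,\qquad K_u v:=\Lambda^{r}v-(uv_x+vu_x).
\]
The plan is to show that $A_\varepsilon:H^s_0(\mathbb{T})\to L^2_0(\mathbb{T})$ is an isomorphism and that $K_u$ is compact. Since compact perturbations preserve the Fredholm property and the index, this gives the conclusion.

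\textbf{Step 1: $A_\varepsilon$ is an isomorphism.} $A_\varepsilon$ is the Fourier multiplier $-\varepsilon|n|^{s}$ for $n\neq0$. An odd function has $\widehat v(0)=0$, and $\Lambda^s$ preserves oddness because its symbol is even. On odd functions $\|v\|_{H^s}$ is equivalent to $\|\Lambda^s v\|_{L^2}$, since $|n|\ge1$. Hence $A_\varepsilon$ is bounded with bounded inverse $-\varepsilon^{-1}\Lambda^{-s}$ from $L^2_0$ onto $H^s_0$. It is therefore Fredholm of index zero.

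\textbf{Step 2: $K_u$ is compact.} It suffices to show $K_u$ maps $H^s_0$ boundedly into $H^\delta_0$ for some $\delta>0$, because $H^\delta(\mathbb{T})\hookrightarrow L^2(\mathbb{T})$ is compact (Rellich on the torus, or directly by truncating Fourier modes). We treat the three terms separately.
\begin{itemize}
\item The term $\Lambda^r v$: it maps $H^s$ into $H^{s-r}$, and $s-r>0$. For $r\le0$ this term is even bounded into $L^2$ with compact diagonal symbol $|n|^{r}\to0$ when $r<0$. When $r=0$, note $\Lambda^0$ is the identity on odd functions, but it is still compact as a map $H^s_0\to L^2_0$ because of the compact embedding.
\item The term $v u_x$: since $s>1$, we have $u_x\in H^{s-1}\subset L^2$ and $v\in H^s\subset L^\infty$. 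The map $v\mapsto vu_x$ is the composition of the compact embedding $H^s_0\hookrightarrow \mathcal{C}(\mathbb{T})$ (valid because $s>1/2$, by Arzelà–Ascoli or the decay of Fourier coefficients) with the bounded multiplication $\mathcal{C}\to L^2$, $w\mapsto wu_x$. So it is compact.
\item The term $u v_x$: since $u\in L^\infty$ and $v\mapsto v_x$ maps $H^s\to H^{s-1}$ compactly into $L^2$ (because $s-1>0$), it is compact after multiplying by the bounded function $u$.
\end{itemize}
Each term also preserves parity: products of odd with even functions are odd, and $u_x,v_x$ are even. So $K_u$ indeed maps into $L^2_0$.

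\textbf{Step 3: conclusion.} $\partial_u\mathfrak{F}(\varepsilon,u)=A_\varepsilon+K_u$ is an isomorphism plus a compact operator, hence Fredholm of index $0$.

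The only delicate point is the term $uv_x$, since it involves a derivative of $v$. Its compactness relies crucially on $s>1$, so that $v\mapsto v_x$ loses strictly less than $s$ derivatives. Everything else is routine.
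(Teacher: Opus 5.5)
Your proof is correct and is essentially the paper's argument. The paper writes $\partial_u\mathfrak{F}(\varepsilon,u)=\Lambda^{s}(-\varepsilon I+K_{\varepsilon,u})$ with $K_{\varepsilon,u}$ compact on $H^s_0(\mathbb{T})$, which is exactly your splitting $A_\varepsilon+K_u$ composed with the isomorphism $\Lambda^{-s}$; the only difference is how compactness of the first-order part is obtained. The paper uses the product estimate \eqref{Eq3.6} to gain derivatives, landing in $H^{2s-1}$ and then using $H^{2s-1}\hookrightarrow H^s$. You instead get compactness of $v\mapsto uv_x+vu_x$ from $H^s_0$ to $L^2_0$ more elementarily, using only the compact embeddings $H^s\hookrightarrow\mathcal{C}$ and $H^{s-1}\hookrightarrow L^2$ together with $L^\infty$ and $L^2$ multiplication bounds.
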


\begin{proof}
	Noting that $\L^{s}: H^{s}_{0}(\TT)\to L^{2}_{0}(\TT)$ is a topological isomorphism with inverse $\L^{-s}:L^{2}_{0}(\TT)\to H^{s}_{0}(\TT)$, we can define the conjugated operator
	\[
	T_{\varepsilon,u}:=\L^{-s} \partial_{u}\mathfrak{F}(\varepsilon,u) :H^s_0(\mathbb{T})\longrightarrow H^s_0(\mathbb{T}).
	\]
	A direct computation gives, for every $v\in H^s_0(\mathbb{T})$,
	\begin{equation*}
		T_{\varepsilon,u}v
		=
		\L^{r-s}v-\varepsilon v-\L^{-s}(u v_x+v u_x),
	\end{equation*}
	hence
	\begin{equation*}
		T_{\varepsilon,u}
		=
		-\varepsilon I + K_{\varepsilon,u},
		\qquad
		K_{\varepsilon,u}
		:=
		\L^{r-s}-\L^{-s}(u\partial_x+u_x).
	\end{equation*}
	We will prove that $K_{\varepsilon,u}:H^s_0(\mathbb{T})\to H^s_0(\mathbb{T})$ is compact. Since $-\varepsilon I$ is an isomorphism on $H^s_0(\mathbb{T})$ for every $\varepsilon>0$, it will follow that $T_{\varepsilon,u}$ is Fredholm of index $0$, and then the same will hold for $\partial_{u}\mathfrak{F}(\varepsilon,u)$ because $\partial_{u}\mathfrak{F}(\varepsilon,u)=\L^s T_{\varepsilon,u}$ is the composition of $T_{\varepsilon,u}$ with an isomorphism.
	
	Let us show that $\L^{r-s}:H^s_0(\mathbb{T})\to H^s_0(\TT)$ is compact.
	Since $r<s$, we have $r-s<0$. Thus $\L^{r-s}=\L^{-(s-r)}$ gains $\delta:=s-r>0$ derivatives:
	\[
	\L^{r-s}:H^s_0(\mathbb{T})\longrightarrow H^{s+\delta}_0(\mathbb{T})
	=
	H^{2s-r}_0(\mathbb{T}).
	\]
	Moreover, because $\delta>0$, the Sobolev embedding $H^{s+\delta}(\mathbb{T})\hookrightarrow H^s(\mathbb{T})$
	is compact. Therefore the composition
	\[
	H^s_0(\mathbb{T})
	\xrightarrow{\L^{r-s}}
	H^{s+\delta}_0(\mathbb{T})
	\hookrightarrow
	H^s_0(\mathbb{T})
	\]
	is compact, i.e. $\L^{r-s}:H^s_0(\mathbb{T})\to H^s_0(\mathbb{T})$ is compact.
	
	Finally, we prove that $\L^{-s}(u\partial_x+u_x): H^s_0(\mathbb{T}) \to H^s_0(\mathbb{T})$ is compact.
	We use the standard Sobolev product bound on $\mathbb{T}$ in Fourier variables: since $s>1$,
	there exists $C>0$ such that for all $f\in H^s(\mathbb{T})$ and $g\in H^{s-1}(\mathbb{T})$,
	\begin{equation}\label{Eq3.6}
		\|fg\|_{H^{s-1}}
		\le C\,\|f\|_{H^s}\,\|g\|_{H^{s-1}}.
	\end{equation}
	Applying \eqref{Eq3.6} with $f=u$ and $g=v_x$, we obtain
	\[
	\|u v_x\|_{H^{s-1}}
	\le C\,\|u\|_{H^s}\,\|v\|_{H^s}.
	\]
	Similarly, since $u_x\in H^{s-1}$ and $v\in H^s$, another application of \eqref{Eq3.6} yields
	\[
	\|v u_x\|_{H^{s-1}}
	\le C\,\|v\|_{H^s}\,\|u_x\|_{H^{s-1}}
	\le C\,\|u\|_{H^s}\,\|v\|_{H^s}.
	\]
	Therefore,
	\begin{equation*}
		\|u v_x+v u_x\|_{H^{s-1}}
		\le C\,\|u\|_{H^s}\,\|v\|_{H^s},
		\qquad \forall\,u,v\in H^s(\mathbb{T}),
	\end{equation*}
	which shows that the first--order operator $B_u(v):=u v_x+v u_x$ is bounded
	\[
	B_u:H^s_0(\mathbb{T})\longrightarrow H^{s-1}_0(\mathbb{T}).
	\]
	Now we apply the smoothing operator $\L^{-s}$, which gains $s$ derivatives:
	\[
	\L^{-s}:H^{s-1}_0(\mathbb{T})\longrightarrow H^{2s-1}_0(\mathbb{T}).
	\]
	Hence the composition
	\[
	H^s_0(\mathbb{T})
	\xrightarrow{\,B_u\,}
	H^{s-1}_0(\mathbb{T})
	\xrightarrow{\L^{-s}}
	H^{2s-1}_0(\mathbb{T})
	\]
	is bounded. Since $s>1$ we have $2s-1>s$, and thus the embedding $H^{2s-1}_0(\mathbb{T})\hookrightarrow H^s_0(\mathbb{T})$ is compact. Therefore the operator
	\[
	\L^{-s}(u\partial_x+u_x):H^s_0(\mathbb{T})\longrightarrow H^s_0(\mathbb{T})
	\]
	is compact. This completes the proof.
\end{proof}

Thus, we have proved that our operator $\mathfrak{F}(\varepsilon,u)$ satisfies (F2). 

Now we turn our attention to the hypothesis (F3). We proceed with the spectral theoretic study of the linearization of the operator $\mf{F}$ on the \textit{trivial branch} $\mc{T}$ defined by 
$$
\mc{T}:=\{(\e,u)\in \R_{>0}\times H^{s}_{0}(\mathbb{T}) \; : \; u=0 \}.
$$
The linearization of $\mf{F}$ on $\mc{T}$ is given by the family of operators $\mf{L}(\e):= \partial_{u}\mf{F}(\e,0)$, $\e>0$, given explicitly by
$$
\mf{L}:\R_{>0}\longrightarrow\Phi_{0}(H^{s}_{0}(\mathbb{T}),L^{2}_{0}(\mathbb{T})), 
\quad \mf{L}(\e)[v] = \L^{r}v - \e \L^{s}v.
$$

Let us recall that the \textit{generalized spectrum} of $\mf{L}$ is given by
$$\Sigma(\mf{L}):=\{\e\in \R_{>0} : \, \mf{L}(\e)\notin GL\left(H^{s}_{0}(\mathbb{T}),L^{2}_{0}(\mathbb{T})\right)\}.$$
The next lemma describes the set $\Sigma(\mf{L})$ together with some other properties of the linearized operator.
\begin{lemma}
	For each $\e>0$, $\mf{L}(\e): H^{s}_{0}(\mathbb{T})\subset L^{2}_{0}(\mathbb{T}) \to L^{2}_{0}(\mathbb{T})$ is a self-adjoint operator.
	The generalized spectrum of the family $\mf{L}(\e)$ is given by
	\begin{equation*}
		\Sigma(\mf{L})=\left\{\s_k=k^{r-s} \, : \, k\in\N\right\}.
	\end{equation*}
	Moreover, they are ordered as
	\begin{equation*}
		0<\cdots<k^{r-s}<\cdots < 2^{r-s}< 1,
	\end{equation*}
	and it holds that
	\begin{align*}
		N[\mf{L}(k^{r-s})]=\spann\left\{\sin(kx)\right\}, \quad k\in \N.
	\end{align*}
	Furthermore, for each $k\in\N$, the generalized eigenvalue $\s_k$ is $1$-transversal and its generalized algebraic multiplicity is
	$$\chi[\mf{L},\s_{k}]=1.$$
\end{lemma}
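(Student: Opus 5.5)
The whole argument rests on the fact that $\mf{L}(\e)$ is diagonal in the sine basis. Every $v\in H^{s}_{0}(\TT)$ is odd and real, so it expands as $v=\sum_{k\ge1} b_k\sin(kx)$. On this basis
$$\mf{L}(\e)[\sin(kx)]=(k^{r}-\e k^{s})\sin(kx).$$
All claims will be read off from the Fourier symbol $m_\e(k):=k^{r}-\e k^{s}$.

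\emph{Self-adjointness.} For $u,v\in H^{s}_{0}$, Parseval gives
$$(\mf{L}(\e)u,v)_{L^2}=\sum_{n\neq 0} m_\e(|n|)\,\widehat u(n)\overline{\widehat v(n)}=(u,\mf{L}(\e)v)_{L^2},$$
because the symbol is real and even. The domain is exactly $H^{s}_{0}$, since $|m_\e(k)|\sim \e k^{s}$ as $k\to\infty$. So $\mf{L}(\e)$ is a real Fourier multiplier with the correct domain, hence self-adjoint.

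\emph{Spectrum and kernels.} $\mf{L}(\e)$ fails to be an isomorphism exactly when $m_\e(k)=0$ for some $k\ge1$. Indeed, if $m_\e(k)\neq0$ for all $k$, then $|m_\e(k)|\ge c\,k^{s}$ for all $k$: this holds for large $k$ by the asymptotics and for the finitely many small $k$ trivially. The inverse multiplier $1/m_\e$ then maps $L^{2}_{0}$ to $H^{s}_{0}$ boundedly. The condition $m_\e(k)=0$ is equivalent to $\e=k^{r-s}$. Since $r-s<0$, the map $k\mapsto k^{r-s}$ is strictly decreasing, which gives the stated ordering and shows that distinct $k$ give distinct $\s_k$. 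At $\e=\s_k$ only the mode $k$ vanishes, so $N[\mf{L}(\s_k)]=\spann\{\sin(kx)\}$.

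\emph{Transversality.} Here $\mf{L}_0=\mf{L}(\s_k)$ and $\mf{L}_1=\mf{L}'(\s_k)=-\L^{s}$. By self-adjointness and the diagonal structure,
$$R[\mf{L}_0]=\{w\in L^{2}_{0}: (w,\sin(kx))_{L^2}=0\},$$
which is closed and of codimension one. We have $\mf{L}_1\sin(kx)=-k^{s}\sin(kx)\notin R[\mf{L}_0]$, so $\mf{L}_1(N[\mf{L}_0])\oplus R[\mf{L}_0]=L^{2}_{0}$. Thus $\s_k$ is $1$-transversal.

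\emph{Multiplicity.} I would then use the standard result recalled in Appendix A1 (Esquinas–López-Gómez): for a $1$-transversal eigenvalue, the algebraic multiplicity equals $\chi[\mf{L},\s_k]=\dim N[\mf{L}_0]=1$.

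If one prefers to avoid that appeal, $\chi$ can be checked directly. The determinant-type computation on the one-dimensional kernel reduces to the order of vanishing of the scalar function $\e\mapsto m_\e(k)=k^{r}-\e k^{s}$ at $\s_k$. This function is affine with slope $-k^{s}\neq0$, so it vanishes to order exactly one.

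The only delicate point is matching the precise definition of $\chi$ from the appendix, i.e. which characterization to invoke. The remaining steps are routine Fourier-multiplier computations.
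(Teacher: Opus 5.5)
Your proposal is correct and follows the paper's proof: Fourier diagonalization gives the kernels and the spectrum, transversality follows from $\mf{L}_1=-\L^{s}$ together with $R[\mf{L}(\s_k)]=N[\mf{L}(\s_k)]^{\perp}$, and the multiplicity comes from the identity $\chi=\dim N[\mf{L}_0]$ recalled in the appendix. The only minor difference is that you prove invertibility for $\e\notin\{\s_k\}$ directly, via the lower bound $|m_\e(k)|\ge c\,k^{s}$, and get the range from the diagonal structure; the paper checks only injectivity and uses the Fredholm index-zero property, both implicitly for invertibility and explicitly for closedness of the range.
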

\begin{proof}
	To show that $\mf{L}(\varepsilon)$ is selfadjoint is straightforward.
	
	Let $\e >0$ and take $v\in N[\mf{L}(\e)]$. Then, $\mf{L}(\e)[v]=0$ or equivalently $\L^{r}v-\e \L^{s}v=0$. In the Fourier side this becomes
	\begin{equation}
		\label{Eq3.7}
		\sum_{n\neq 0}\left(|n|^{r}-\e|n|^{s}\right)\widehat{u}(n) e^{i n x}=0.
	\end{equation}
	Hence, if $\e\neq k^{r-s}$ for each $k\in\N$, necessarily $\widehat{u}(n)=0$ for each $n\in\Z$ and this implies that $u\equiv0$. Hence $N[\mf{L}(\e)]=\{0\}$.
	On the one hand, if $\e= k^{r-s}$ for some $k\in \N$, then equation \eqref{Eq3.7} implies that 
	$$\widehat{u}(n)=0, \quad n\neq \pm k.$$
	Hence $u(x) =\widehat{u}(-k)e^{-ikx}+\widehat{u}(k)e^{i k x}=2i\widehat{u}(k)\sin(kx)$ and therefore 
	$$
	N[\mf{L}(k^{r-s})]=\spann\{\sin(kx)\}.
	$$ 
	Then $\s_k=k^{r-s}\in \Sigma(\mf{L})$ for each $k\in\N$. 
	
	Finally, let us show the transversality condition and the algebraic multiplicity. An elementary computation gives
	\begin{equation}
		\label{Eq3.8}
		\mf{L}_{1}(\e):=\frac{{\rm{d}}\mf{L}}{{\rm{d}}\e}(\e)= -\L^{s}, \quad \e>0.
	\end{equation}
	As $\mf{L}(\e)$ is a Fredholm operator, $R[\mf{L}(\e)]$ is closed in $L^{2}_{0}(\mathbb{T})$. Therefore, we deduce that in $L^{2}_{0}(\mathbb{T})$,
	$$
	R[\mf{L}(\e)]=\overline{R[\mf{L}(\e)]}=N[\mf{L}^{\ast}(\e)]^{\perp}=N[\mf{L}(\e)]^{\perp},
	$$
	where we have used that $\mf{L}(\e)$ is a self-adjoint operator on $L^{2}_{0}(\mathbb{T})$. Consequently, for each $k\in\N$,
	\begin{equation}
		\label{Eq3.9}
		R[\mf{L}(\s_{k})]=\left\{u\in L^{2}_{0}(\mathbb{T}) \, : \, \int_{\mathbb{T}}\sin(kx) u=0 \right\}.
	\end{equation}
	Hence, by \eqref{Eq3.8}, we obtain that
	\begin{align*}
		\mf{L}_{1}(\s_{k})[\sin(kx)] = -\L^{s}[\sin(kx)]=-k^{s}\sin(kx)\notin R[\mf{L}(\s_{k})], \quad k\in\N.
	\end{align*}
	Therefore, the transversality condition 
	$$
	\mf{L}_{ 1}(\s_{k})\left(N[\mf{L}(\s_{k})]\right)\oplus R[\mf{L}(\s_{k})]= L^{2}_{0}(\mathbb{T}),
	$$
	holds for each $k\in\N$. This implies that the eigenvalues $\s_{k}$ are $1$-transversal and using
	\begin{equation*}
		\chi[\mf{L},\l_0]=\dim N[\mf{L}_0],
	\end{equation*} we deduce that
	$$
	\chi[\mf{L},\s_{k}]=\dim N[\mf{L}(\s_{k})]= 1.
	$$
	The proof is concluded.
\end{proof}

Once we have checked that we can apply the Crandall--Rabinowitz Theorem, we study the local bifurcation of non-trivial solutions of equation \eqref{Eq3.1} from the points $(\s_k,0)\in \R_{>0}\times H^{s}_{0}(\mathbb{T})$. 

The \textit{set of non-trivial solutions} of $\mf{F}$ is defined by
\begin{equation*}
	\mf{S}:=\left[ \mf{F}^{-1}(0)\backslash \mc{T}\right]\cup \{(\e,0):\;\e\in \Sigma(\mf{L})\}\subset \R_{>0}\times H^{s}_{0}(\mathbb{T}).
\end{equation*}
Due to the continuity of the operator $\mf{F}$, see Lemma \ref{Le3.3}, the subset $\mf{S}$ is closed in $\R_{>0}\times H^{s}_{0}(\mathbb{T})$.
The main local result is the following.

\begin{theorem}
	\label{Th3.6}
	Let $k\in\N$. Then, the point $(\s_{k},0)\in \R_{>0} \times H^{s}_{0}(\mathbb{T})$ is a bifurcation point of the non-linearity
	$$\mf{F}:\R_{>0}\times H^{s}_{0}(\mathbb{T})\longrightarrow L^{2}_{0}(\mathbb{T}), \quad \mf{F}(\e,u):= \L^{r}u-\e \L^{s}u - uu_{x},$$
	from the trivial branch $\mc{T}$ to a connected component $\mathscr{C}_{k}$ of the set of non-trivial solutions $\mf{S}$. Let 
	$$Y_{k}:=\{u\in H^{s}_{0}(\mathbb{T}) \, : \, (u,\sin(kx))_{H^{2s}}=0 \}.$$
	Then, the following statements hold:
	\vspace{2pt}
	\begin{enumerate}
		\item[{\rm{(a)}}] {\rm\textbf{Existence:}} There exist $\d>0$ and two $\mc{C}^{\infty}$-maps
		\begin{equation*}
			\Omega_{k}: (-\d,\d) \longrightarrow \R, \quad \L_{k}(0)=\s_{k}, \qquad \Gamma_{k}: (-\d,\d)\longrightarrow Y_{k}, \quad  \Gamma_{k}(0)=0,
		\end{equation*}
		such that for each $t\in(-\d,\d)$,
		\begin{equation*}
			\mathfrak{F}(\Omega_{k}(t),u_{k}(t))=0, \quad u_{k}(s):= t(\sin(kx)+\Gamma_{k}(s)).
		\end{equation*}
		In other words, for some $\r>0$,
		$$\mathscr{C}_{k}\cap B_{\r}(\s_{k},0)=\{(\Omega_{k}(t), t(\sin(kx)+\Gamma_{k}(s))) \, : \, t\in(-\d,\d)\}.$$
		\vspace{1pt}
		\item[{\rm{(b)}}] {\rm\textbf{Uniqueness:}} There exists $\r >0$ such that if $\mathfrak{F}(\e,u)=0$ and
		$(\e,u)\in B_\r(\s_{k},0)\subset \R_{>0}\times H^{s}_{0}(\mathbb{T})$, then either $u = 0$ or
		$(\e,u)=(\Omega_{k}(t),u_{k}(t))$ for some $t\in(-\d,\d)$. Therefore, from $(\s_{k},0)$, there emanate precisely two branches of non-trivial solutions of equation \eqref{Eq3.1}. In other words,
		$$\mf{S}\cap B_{\rho}(\s_{k},0)=\mathscr{C}_{k}\cap B_{\rho}(\s_{k},0).$$
		Moreover, locally the solutions emanating from $(\s_{k},0)$ have exactly $2k$ zeros.
		\vspace{5pt}
		\item[{\rm{(c)}}] {\rm\textbf{Bifurcation direction:}} The bifurcation direction is given by
		\begin{equation*}
			\dot{\Omega}(0)=0, \quad \ddot{\Omega}(0)=\frac{k^{2-s-r}}{2^{r+1}(1-2^{s-r})}.
		\end{equation*}
		Then $\ddot{\Omega}(0)<0$ and hence we are dealing with a subcritical bifurcation. This implies that for a sufficiently small $\r>0$, if $(\e,u)\in B_\r(\s_{k},0)$ and $u\neq 0$, then necessarily $\e<\s_k$.
	\end{enumerate}
	
\end{theorem}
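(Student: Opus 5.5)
The plan is to obtain (a) and (b) directly from Theorem \ref{Th3.1}, and (c) from Theorem \ref{Th3.2}, using the lemmas above to verify the hypotheses.

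\textbf{Hypotheses and complement.} By Lemma \ref{Le3.3}, $\mf{F}\in\mc{C}^\infty$, (F1) holds trivially, and the Fredholm lemma gives (F2). The spectral lemma gives (F3) with $\varphi=\sin(kx)$ and $1$-transversality of $\s_k$.

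For the complement $Y_k$, note that for $u\in H^s_0(\TT)$ one has $(u,\sin(kx))_{H^{2s}}=c_k\,\widehat{u}(k)$ for an explicit constant $c_k\neq0$ (up to a harmless factor $i$). This linear functional is continuous on $H^s_0(\TT)$, so $Y_k$ is a closed subspace. It does not contain $\sin(kx)$, so $N[\mf{L}(\s_k)]\oplus Y_k=H^s_0(\TT)$. Theorem \ref{Th3.1} then yields $\mc{C}^\infty$ maps $\Omega_k,\Gamma_k$ (any $r$ is allowed since $\mf{F}$ is $\mc{C}^\infty$) together with the local uniqueness in (b).

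The connected component $\mathscr{C}_k$ is defined as the component of $\mf{S}$ containing $(\s_k,0)$. The local curve is connected, and by uniqueness it exhausts $\mf{S}\cap B_\rho(\s_k,0)$, which gives the stated identities.

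\textbf{Zeros.} Since $s>1$, $H^s\hookrightarrow\mc{C}^1$, so $\sin(kx)+\Gamma_k(t)\to\sin(kx)$ in $\mc{C}^1$ as $t\to0$. The function $\sin(kx)$ has exactly $2k$ zeros on $\TT$, all of them simple. A $\mc{C}^1$-small perturbation therefore keeps exactly $2k$ zeros: one near each simple zero by the implicit function theorem, and none elsewhere because $|\sin(kx)|$ is bounded below away from small neighbourhoods of its zeros. Multiplying by $t\neq0$ does not change the zero set.

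\textbf{Bifurcation direction.} Here $R[\mf{L}(\s_k)]$ is the $L^2$-orthogonal complement of $\sin(kx)$ by \eqref{Eq3.9}, so we take $\langle v,\varphi^*\rangle=\int_\TT v\sin(kx)$. We have:
\begin{itemize}
\item $\partial^2_{\e u}\mf{F}(\s_k,0)[\varphi]=-\L^s\varphi=-k^s\sin(kx)$, so the denominator is $-\pi k^s$.
\item $\partial^2_{uu}\mf{F}[\varphi,\varphi]=-2\varphi\varphi_x=-k\sin(2kx)$, which is orthogonal to $\sin(kx)$. Hence $\dot\Omega_k(0)=0$, and the second formula of Theorem \ref{Th3.2} applies.
\item To find $\phi$, note $\mf{L}(\s_k)\sin(2kx)=((2k)^r-k^{r-s}(2k)^s)\sin(2kx)=k^r(2^r-2^s)\sin(2kx)$. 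So we take $\phi=c\sin(2kx)$ with $c=k^{1-r}/(2^r-2^s)$.
\item Next, $\partial^2_{uu}\mf{F}[\varphi,\phi]=-c\,\bigl(k\cos(kx)\sin(2kx)+2k\sin(kx)\cos(2kx)\bigr)$. The product-to-sum formulas give $\sin(kx)$-coefficient $-c(k/2-k)=ck/2$.
\item The third derivative vanishes by Lemma \ref{Le3.3}(iii).
\end{itemize}
Therefore
$$\ddot\Omega_k(0)=-\tfrac13\cdot\frac{3\pi ck/2}{-\pi k^s}=\frac{ck^{1-s}}{2}=\frac{k^{2-r-s}}{2^{r+1}(1-2^{s-r})}.$$
This is negative because $s>r$.

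Finally, with $\dot\Omega_k(0)=0$ and $\ddot\Omega_k(0)<0$, we get $\Omega_k(t)<\s_k$ for all small $t\neq0$. Combined with local uniqueness, every nontrivial solution in a small ball satisfies $\e<\s_k$.

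The only delicate step is this bookkeeping in (c): keeping the signs and Fourier projections consistent, and checking that the $L^2$ pairing is a valid choice of $\varphi^*$ for the range in $L^2_0$. Everything else is a direct application of the earlier results.
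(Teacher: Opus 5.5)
Your treatment of (a)--(c) follows the paper's argument. You use Theorem~\ref{Th3.1} for existence and local uniqueness, then Theorem~\ref{Th3.2} with $\varphi^*=\sin(kx)$, the same particular solution $\phi_k=\frac{k^{1-r}}{2^r(1-2^{s-r})}\sin(2kx)$, and you obtain the same value of $\ddot\Omega_k(0)$. Your extra checks are correct and make explicit what the paper leaves implicit: that $Y_k$ is a closed complement of $\mathrm{span}\{\sin(kx)\}$, that the local curve lies in $\mathscr{C}_k$, and that $\dot\Omega_k(0)=0>\ddot\Omega_k(0)$ together with local uniqueness forces $\varepsilon<\sigma_k$ near $(\sigma_k,0)$.

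The step that fails as written is the zero count, which the paper's proof does not address at all. You claim that $s>1$ gives $H^s(\mathbb{T})\hookrightarrow\mathcal{C}^1(\mathbb{T})$, but in one dimension this embedding holds only for $s>3/2$. For $1<s\le 3/2$, which includes the paper's model case $s=3/2$, the convergence $\Gamma_k(t)\to0$ in $H^s$ only gives smallness in a H\"older space $\mathcal{C}^{0,\alpha}$ with $\alpha<1$. Such perturbations of $\sin(kx)$ can oscillate near its zeros, so you get at least $2k$ sign changes but not exactly $2k$ zeros.

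The repair is a uniform bootstrap. Put $w_t:=\sin(kx)+\Gamma_k(t)$. Dividing $\mathfrak{F}(\Omega_k(t),tw_t)=0$ by $t$ and using $\Lambda^r\sin(kx)=\sigma_k\Lambda^s\sin(kx)$ gives
\[
\Omega_k(t)\,\Lambda^s\left(w_t-\sin(kx)\right)=\Lambda^r\left(w_t-\sin(kx)\right)+\left(\sigma_k-\Omega_k(t)\right)k^s\sin(kx)-t\,w_t(w_t)_x .
\]
We have $\Omega_k(t)\to\sigma_k>0$ and $\|w_t(w_t)_x\|_{H^{m-1}}\le C\|w_t\|_{H^m}^2$. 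Hence convergence $w_t\to\sin(kx)$ in $H^m$ (for $m\ge s$) upgrades to convergence in $H^{m+\min\{s-r,\,s-1\}}$, uniformly for small $t$. Finitely many iterations give convergence in $H^2\hookrightarrow\mathcal{C}^1$. From there your perturbation argument goes through and also shows that the $2k$ zeros are simple.
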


\begin{proof}
	After showing that $\mf{F}$ satisfies the hypothesis of Theorem \ref{Th3.1} satisfies the hypotheses of the Crandall--Rabinowitz Theorem in the previous Lemmas, the first part of the result, items (a) and (b), are a direct application of the Crandall--Rabinowitz Theorem \ref{Th3.1} applied to the nonlinearity $\mf{F}$. We proceed to prove item (c). By Theorem \ref{Th3.2}, we have that
	\begin{equation}
		\label{Eq3.10}
		\dot\Omega_{k}(0)=-\frac{1}{2}\frac{( \partial^{2}_{uu}\mf{F}(\s_{k},0)[\varphi_{k},\varphi_{k}],\varphi_{k}^{\ast})_{L^2}}{( \partial^{2}_{\e u}\mf{F}(\s_{k},0)[\varphi_{k}],\varphi_{k}^{\ast})_{L^2}},
	\end{equation}
	where $\varphi_{k}(x)=\sin(kx)$, $x\in\mathbb{T}$, and $\varphi^{\ast}_{k}\in L^{2}_{0}(\mathbb{T})$ is a function such that
	$$
	R[\mf{L}(\s_k)]=\{f\in L^{2}_{0}(\mathbb{T}) \; : \; (f,\varphi^{\ast}_{k})_{L^{2}}=0\}.
	$$
	By identity \eqref{Eq3.9}, we can choose $\varphi^{\ast}_{k} = \sin(kx)$. An standard computation using Lemma \ref{Le3.3} gives
	\begin{align*}
		\partial^{2}_{uu}\mf{F}(\s_{k},0)[\varphi_{k},\varphi_{k}]=-k\sin(2kx), \quad \partial^{2}_{\e u}\mathfrak{F}(\s_{k},0)[\varphi_{k}]=- k^{s}\sin(kx).
	\end{align*}
	Therefore, a direct substitution on identity \eqref{Eq3.10} yields
	\begin{equation*}
		\dot{\Omega}_{k}(0)=-\frac{1}{2}\frac{\left(- k\sin(2kx),\sin(kx)\right)_{L^{2}}}{\left( -k^{s}\sin(kx),\sin(kx)\right)_{L^{2}}}= -\frac{k^{1-s}}{2\pi}\int_{\mathbb{T}}\sin(2kx)\sin(kx)\; dx = 0.
	\end{equation*}
	This proves that $\dot{\Omega}_{k}(0)=0$. On the other hand, by Lemma \ref{Le3.3}, we get
	$$
	\partial^{3}_{uuu}\mf{F}(\s_{k},0)[\varphi_{k},\varphi_{k},\varphi_{k}]=0,
	$$
	therefore, by Theorem \ref{Th3.2}, 
	\begin{align*}
		\ddot\Omega_{k}(0)=-\frac{(\partial^{2}_{uu}\mf{F}(\s_{k},0)[\varphi_{k},\phi_{k}],\varphi_{k}^{\ast})_{L^2}}{( \partial^{2}_{\e u}\mf{F}(\s_{k},0)[\varphi_{k}],\varphi_{k}^{\ast})_{L^2}},
	\end{align*}
	where $\phi_{k}\in H^{s}_{0}(\mathbb{T})$ is any function satisfying
	$$
	\partial^{2}_{uu}\mf{F}(\s_{k},0)[\varphi_{k},\varphi_{k}]+\partial_{u}\mf{F}(\s_{k},0)[\phi_{k}]=0.
	$$
	This is equivalent to the non-homogeneous pseudo-differential equation
	\begin{equation}
		\label{Eq3.11}
		\L^{r}\phi_{k}-\s_{k}\L^{s}\phi_{k}=k\sin(2kx), \quad x\in\mathbb{T}.
	\end{equation}
	As we can rewrite
	$$k\sin(2kx)=  -\frac{k}{2i}e^{-2kxi} +\frac{k}{2i}e^{2kxi},$$
	expanding equation \eqref{Eq3.11} in Fourier series, we obtain
	$$
	\sum_{n\neq 0}\left(|n|^{r}-k^{r-s}|n|^{s}\right)\widehat{\phi_{k}}(n)\frac{e^{inx}}{\sqrt{2\pi}} = -\frac{k}{2i}e^{-2kxi} +\frac{k}{2i}e^{2kxi}.
	$$
	Comparing each summand, we deduce that
	\begin{align*}
		&	\widehat{\phi_{k}}(2k)=-\widehat{\phi_{k}}(-2k)=\frac{\sqrt{2\pi}}{2i}\frac{k^{1-r}}{2^{r}(1-2^{s-r})}, \\
		& \widehat{\phi_{k}}(n)=0, \quad n\neq -2k, -k, k, 2k.
	\end{align*}
	Therefore, the function $\phi_k$ is given by
	\begin{equation*}
		\phi_{k}(x) = \frac{2i}{\sqrt{2\pi}}\widehat{\phi_{k}}(k)\sin(kx) + \frac{k^{1-r}}{2^{r}(1-2^{s-r})}\sin(2kx).
	\end{equation*}
	We take $\widehat{\phi_{k}}(k)=0$. This explicit representation of $\phi_{k}$ gives
	\begin{align*}
		\partial^{2}_{uu}\mf{F}(\s_{k},0)[\varphi_{k},\phi_{k}] & = -(\varphi_{k})_{x}\phi_{k}-\varphi_{k}(\phi_{k})_{x} \\
		& =-\frac{k^{2-r}}{2^{r}(1-2^{s-r})}\left(\cos(kx)\sin(2kx)+2\cos(2kx)\sin(kx)\right),
	\end{align*}
	therefore,
	\begin{align*}
		&\left(\partial^{2}_{uu}\mf{F}(\s_{k},0)[\varphi_{k},\phi_{k}], \varphi_{k}\right)_{L^{2}}  \\
		& = -\frac{k^{2-r}}{2^{r}(1-2^{s-r})}\left(\frac{1}{2}\int_{\TT}\sin^{2}(2kx) \; dx+2\int_{\TT}\cos(2kx)\sin^{2}(kx) \; dx\right)\\ 
		& = \frac{\pi}{2}\frac{k^{2-r}}{2^{r}(1-2^{s-r})}=\frac{k^{2-r}\pi}{2^{r+1}(1-2^{s-r})}.
	\end{align*}
	Finally, from the identity
	$$
	\left(\partial^{2}_{\e u}\mf{F}(\s_{k},0)[\varphi_{k}],\varphi_{k}^{\ast}\right)_{L^{2}}=\left(-k^{s}\sin(kx), \sin(kx)\right)_{L^{2}}=-k^{s}\pi,
	$$
	we deduce that
	$$
	\ddot{\Omega}_{k}(0) =-\frac{(\partial^{2}_{uu}\mf{F}(\s_{k},0)[\varphi_{k},\phi_{k}],\varphi_{k}^{\ast})_{L^2}}{( \partial^{2}_{\e u}\mf{F}(\s_{k},0)[\varphi_{k}],\varphi_{k}^{\ast})_{L^2}}=\frac{k^{2-s-r}}{2^{r+1}(1-2^{s-r})}.
	$$
	This concludes the proof.
\end{proof}

Thus, roughly speaking we have proved the local structure (near the bifurcation points $(\s_{k},0)$, $k\in\N$) of the bifurcation diagram of Figure \ref{F1}.

Moreover, we prove that, locally near each bifurcation point $(\s_k,0)$, the connected component $\mathscr{C}_k$ consists of solutions with exactly $2k$ simple zeros. Our numerical continuation (see Section~\ref{B}) illustrates this structure and computes representative profiles for the case $r=\tfrac12$ and $s=\tfrac32$. Figure~\ref{F2} displays some families of steady states along the components $\mathscr{C}_k$ for $k=1,2,3,4$ as shown in Figure \ref{F1}. From left to right (blue, red, yellow and black), we show numerical profiles on $\mathscr{C}_1$ for $\varepsilon\in[0.1581,1]$, on $\mathscr{C}_2$ for $\varepsilon\in[0.1508,0.5]$, on $\mathscr{C}_3$ for $\varepsilon\in[0.1477,0.3333]$, and on $\mathscr{C}_4$ for $\varepsilon\in[0.1492,0.25]$.

\begin{center}
	\begin{figure}[h!]
		\includegraphics[scale=0.4]{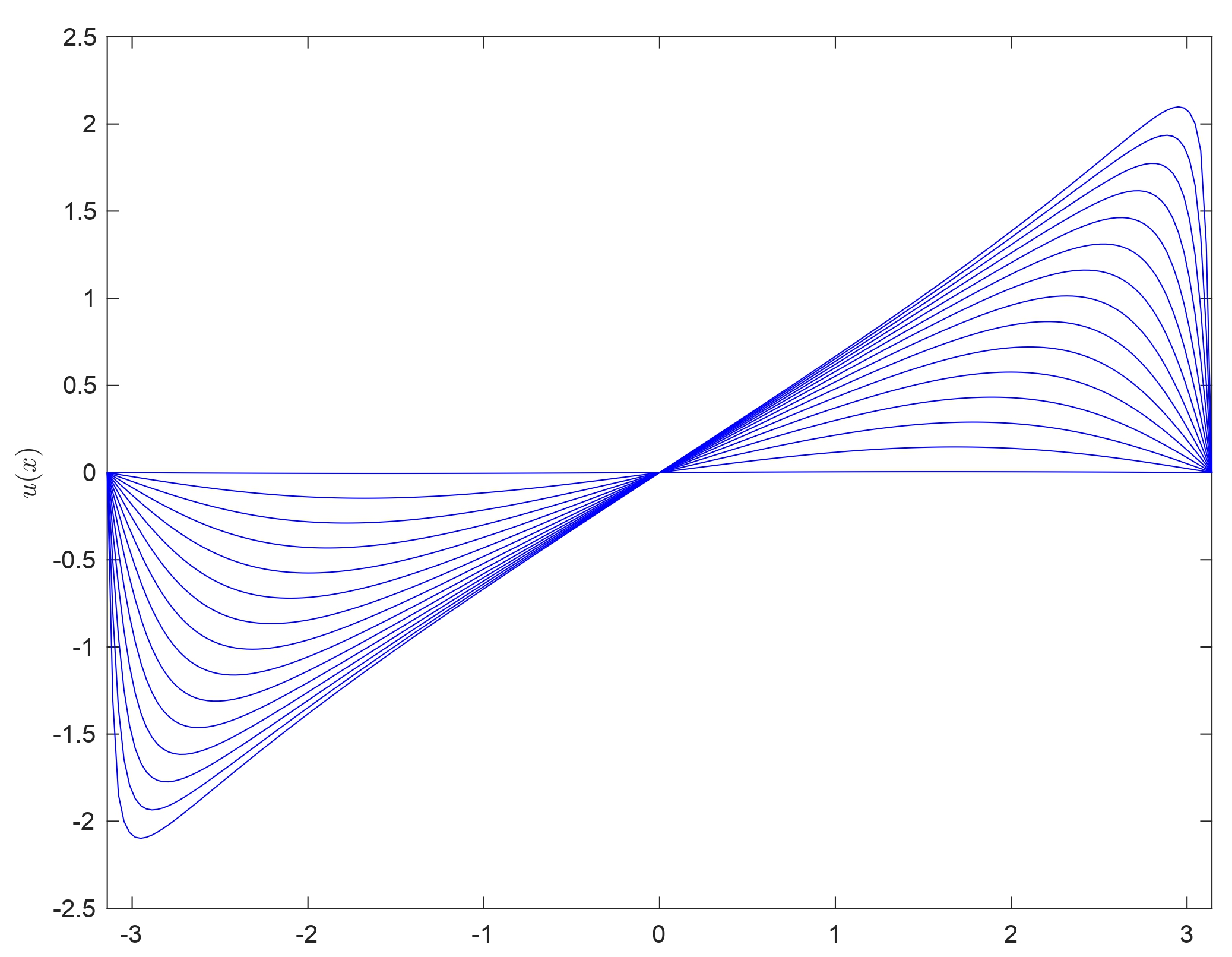}
		\includegraphics[scale=0.4]{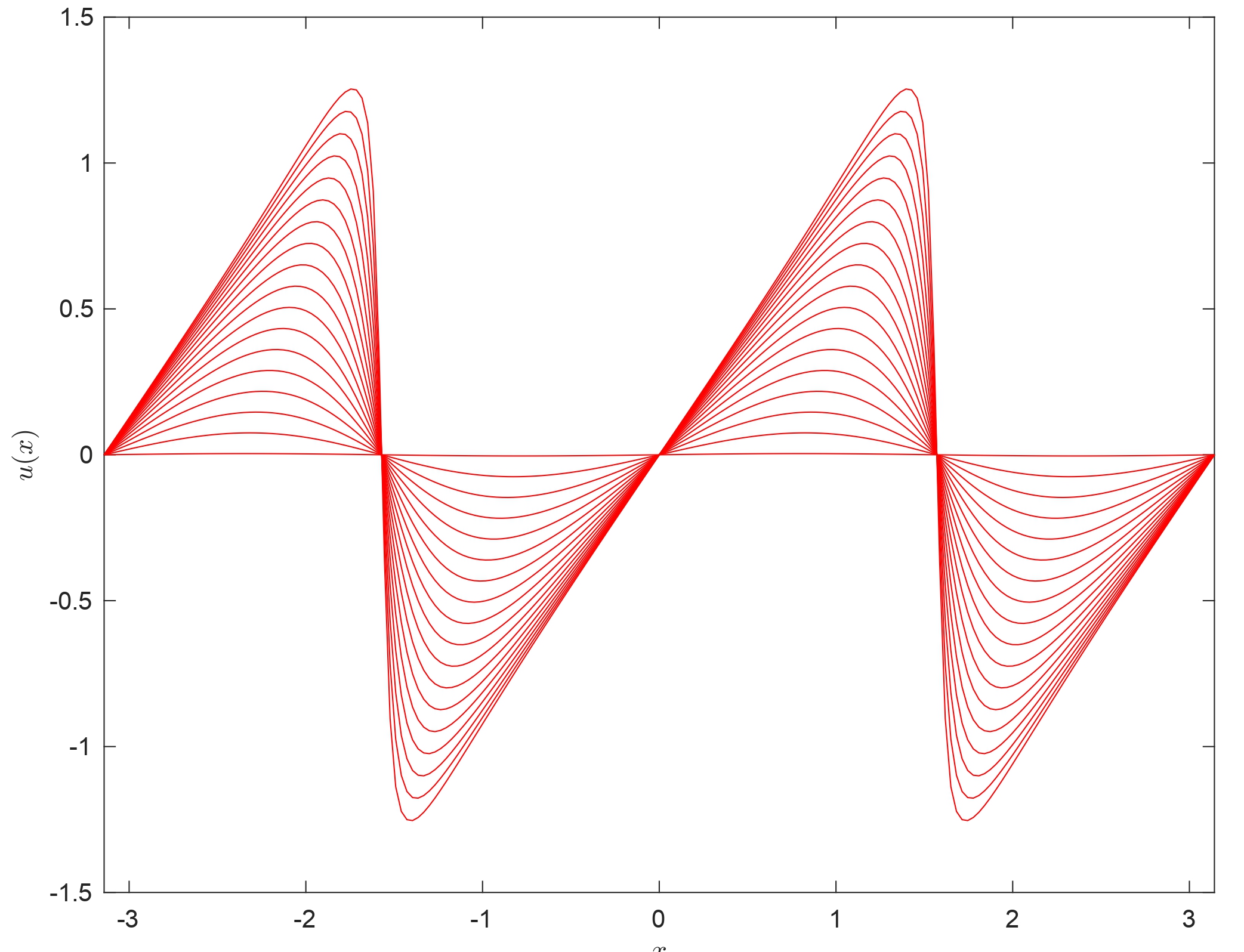}
		\includegraphics[scale=0.4]{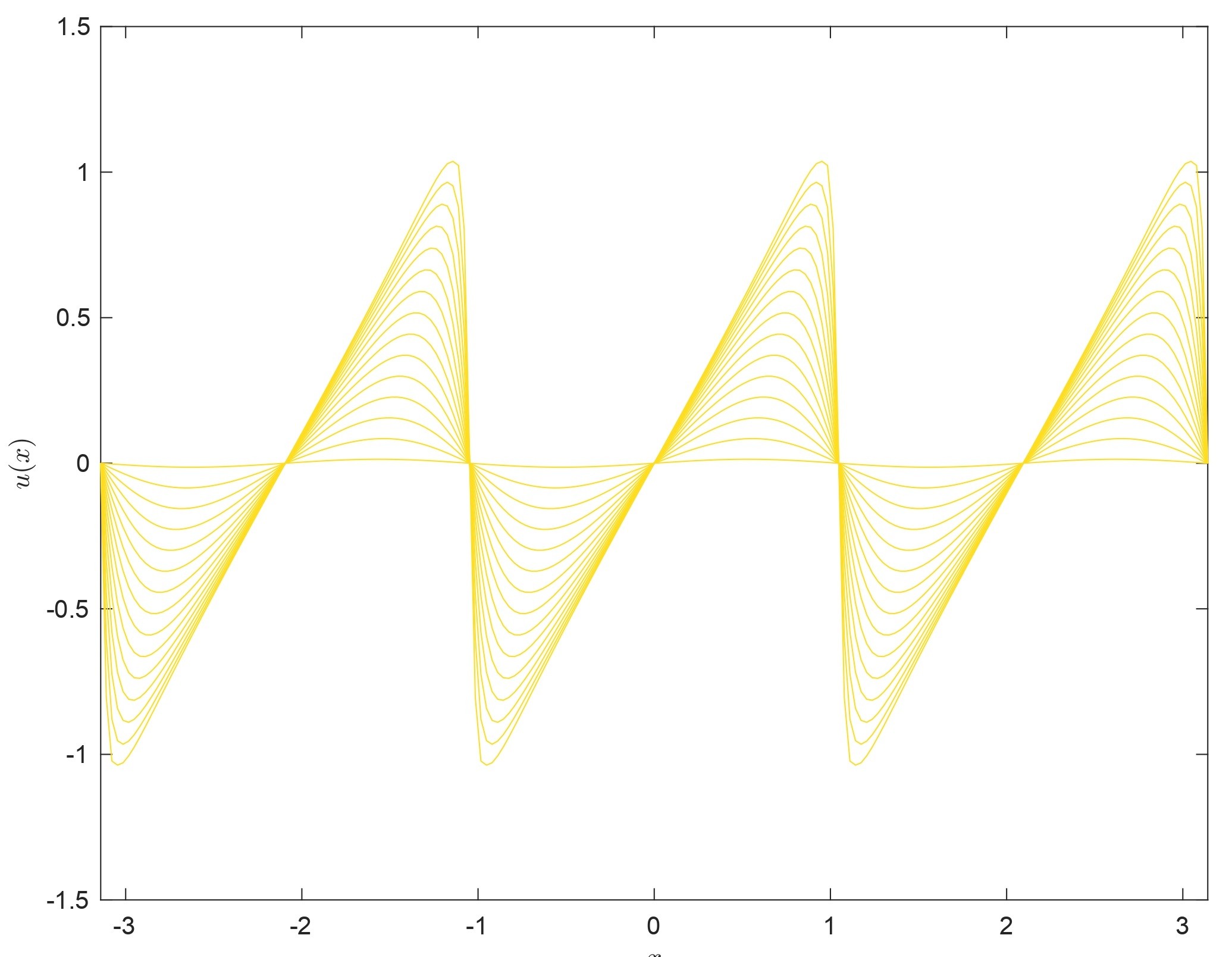}
		\includegraphics[scale=0.4]{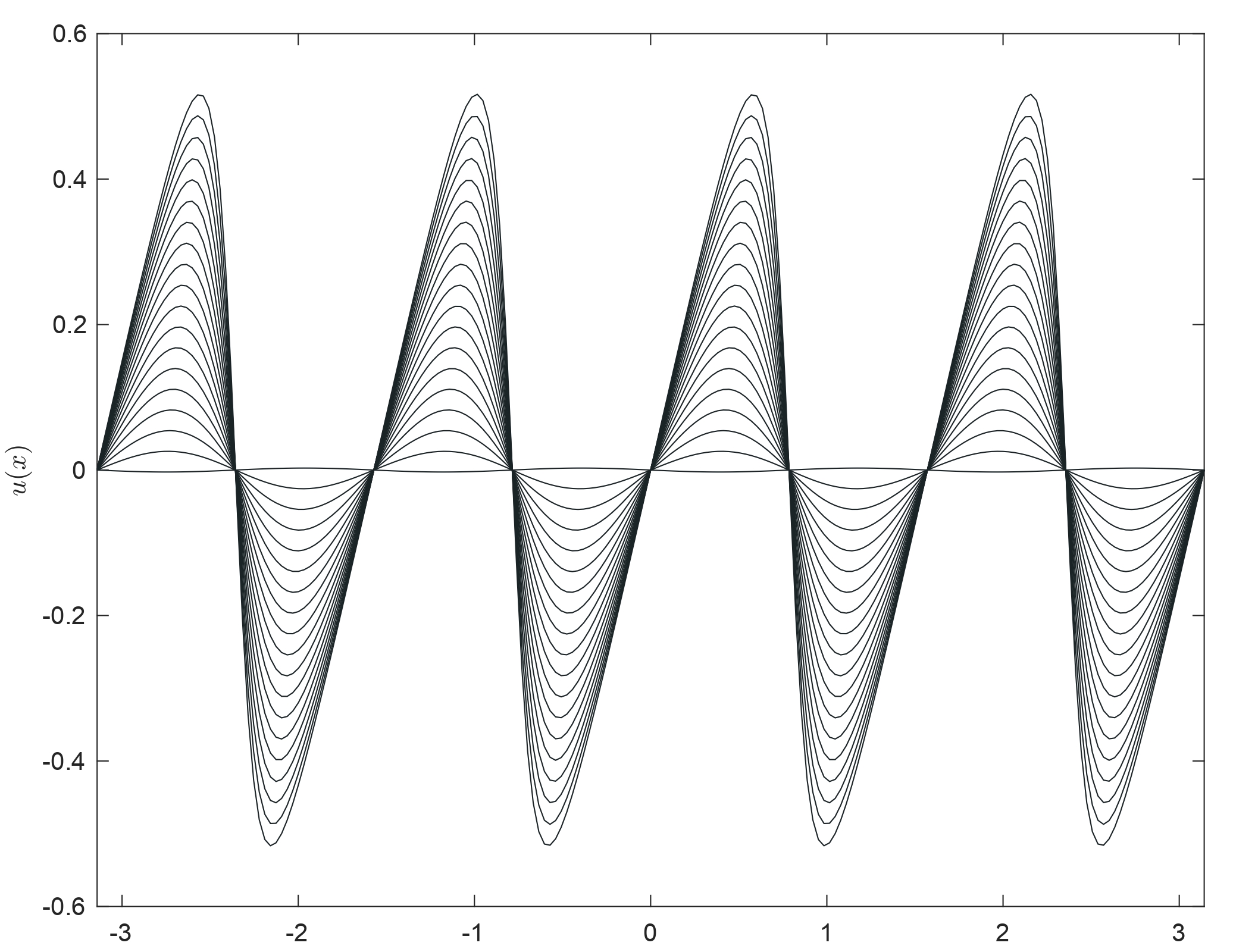}
		\caption{Profiles of some of the solutions (for $r=\tfrac{1}{2}$ and $s=\tfrac{3}{2}$) of the connected components $\mathscr{C}_{k}$ for $k=1,2,3,4$, respectively.}
		\label{F2}
	\end{figure}
\end{center}

\section{A priori estimates}\label{Se4}
In this section we collect some estimates for the solutions of \eqref{Eq1.2}.

Let us start with the linear version of \eqref{Eq1.1}. The next is a regularity result for the weak solutions of the corresponding linear pseudo--differential equation involving the Fourier multiplier operators $\L^{s}$ and $\L^{r}$. We omit the proof as it is straightforward.

\begin{lemma}
	\label{Le4.1}
	Consider $c\in L^{\infty}_{0}(\mathbb{T})$, $d\in L^{\infty}(\mathbb{T})$ an even function and $f\in L^{2}_{0}(\mathbb{T})$. Let $u\in H_{0}^{s/2}(\mathbb{T})$ be a weak solution of the linear problem
	\begin{equation*}
		\L^{r}u-\e\L^{s}u+c(x)u_{x}+d(x)u=f, \quad x\in\mathbb{T}.
	\end{equation*}
	Then, $u\in H^{s}_{0}(\TT)$ and the following estimate holds
	\begin{equation}
		\label{Eq4.1}
		\varepsilon\|u\|_{\dot{H}^{s}}\leq \|f\|_{L^{2}}+\|d\|_{L^\infty}\|u\|_{L^{2}}+\|u\|_{\dot{H}^{r}}+\|c\|_{L^\infty}\|u\|_{\dot{H}^{1}}.
	\end{equation}
\end{lemma}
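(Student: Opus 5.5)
The plan is to rewrite the equation so that the top-order term is isolated and then read off both the regularity and the estimate from the $L^2$ norm. Since $u$ is a weak solution, for every test function $\psi\in H^{s/2}_0(\TT)$ we have
\[
(\L^{r/2}u,\L^{r/2}\psi)_{L^2}-\e(\L^{s/2}u,\L^{s/2}\psi)_{L^2}+(c u_x+du,\psi)_{L^2}=(f,\psi)_{L^2}.
\]
If $r<0$, the first term is read as $(\L^r u,\psi)_{L^2}$, which is harmless because $\L^r$ is bounded on $L^2$.

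The first step is to set
\[
g:=\L^{r}u+c u_x+d u-f,
\]
so that the weak formulation says exactly that $\e\L^s u=g$ in the sense of distributions, or equivalently on the Fourier side $\e|n|^s\widehat u(n)=\widehat g(n)$ for all $n\neq0$ (take $\psi=\sin(nx)$). Parity is preserved: $c$ is odd and $u_x$ is even, so $cu_x$ is odd, and $d$ is even and $u$ is odd, so $du$ is odd. Hence $g$ is odd.

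The main obstacle is to know a priori that $g\in L^2$, which requires $u_x\in L^2$, that is, $u\in H^1$. Since $u\in H^{s/2}$ and we only know $s>1$, this is not automatic. I would bootstrap on Fourier coefficients.
\begin{itemize}
\item If $s\ge 2$, then $u\in H^1$ already. In that case $g\in L^2$ directly, because $\|\L^r u\|_{L^2}=\|u\|_{\dot H^r}$ and $r<s$ gives $r\le s/2$ or at least $\L^r u$ is controlled, while $cu_x,du\in L^2$.
\item If $1<s<2$, I would first use that $c u_x\in H^{-1}$-type spaces only in a duality sense. If this is too delicate, I would instead note that the lemma is applied to solutions already in $H^s$, and treat the regularity statement as established by a standard iteration: each step gains $s-1>0$ derivatives in the equation $\e\L^s u=g$, until $u\in H^1$.
\end{itemize}
Concretely, for the iteration: if $u\in H^\sigma$ with $\sigma<1$, then $u_x\in H^{\sigma-1}$, and multiplication by $c\in L^\infty$ must be handled. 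This is the genuinely delicate point, and I would try a mollification/difference-quotient argument if the multiplication step fails.

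Once $g\in L^2$, Plancherel gives $u\in H^s_0(\TT)$ and
\[
\e\|u\|_{\dot H^s}=\e\|\L^s u\|_{L^2}=\|g\|_{L^2}.
\]
The triangle inequality then gives
\[
\|g\|_{L^2}\le \|f\|_{L^2}+\|d\|_{L^\infty}\|u\|_{L^2}+\|\L^r u\|_{L^2}+\|c\|_{L^\infty}\|u_x\|_{L^2}.
\]
Finally, $\|\L^ru\|_{L^2}=\|u\|_{\dot H^r}$ and $\|u_x\|_{L^2}=\|u\|_{\dot H^1}$, which is exactly \eqref{Eq4.1}.
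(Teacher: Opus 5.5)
The paper omits this proof as straightforward. Your final step is surely the intended argument: rewrite the equation as $\e\L^s u=g$, apply Plancherel, then the triangle inequality. It is correct as soon as $u\in H^s_0(\TT)$.

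The genuine gap is the regularity claim when $1<s<2$, which you leave open. The iteration you sketch cannot be run in the $H^\sigma$ scale. For $\sigma<1$, $u_x\in H^{\sigma-1}$ is a distribution of negative order, and multiplication by a merely bounded $c$ is neither bounded nor even defined on $H^{\sigma-1}$. Mollification or difference quotients do not repair this, because the obstruction is that the product itself is undefined. Indeed, with only $u\in H^{s/2}_0(\TT)$ and $c\in L^\infty$, the term $(cu_x,\psi)_{L^2}$ in your weak formulation has no meaning, so the statement implicitly requires more.

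Two extra hypotheses would close the argument:
\begin{enumerate}
\item[(a)] $u\in H^{\max\{1,r\}}(\TT)$, which gives $cu_x,\,du,\,\L^r u\in L^2$ at once.
\item[(b)] $c\in\mc{C}^{\alpha}(\TT)$ with $\alpha>1-s/2$. Then multiplication by $c$ is bounded on $H^{\sigma-1}$ for all $\sigma\ge s/2$, and your iteration gains $s-1$ derivatives per step until $u\in H^1$.
\end{enumerate}
Your fallback that the lemma ``is applied to solutions already in $H^s$'' is not accurate. In the proof that $\mf{F}$ is proper on closed bounded sets, the lemma is used precisely to upgrade the limit $u_0$ to $H^s_0(\TT)$. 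However, there $u_0\in H^q_0(\TT)$ with $q=\max\{1,r\}$, so hypothesis (a) is available and the argument closes.

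There is also a smaller slip. Even for $s\ge 2$, you cannot get $\L^r u\in L^2$ from $u\in H^{s/2}$, since $r<s$ does not imply $r\le s/2$. Keep $\L^r$ on the left instead. On the Fourier side, $(\e|n|^s-|n|^r)\widehat{u}(n)=\widehat{h}(n)$ for $n\neq0$, where $h:=cu_x+du-f$. Moreover, $\e|n|^s-|n|^r\ge\frac{\e}{2}|n|^s$ for all large $|n|$, and the finitely many remaining modes are harmless (even when $\e=k^{r-s}$). Hence $h\in L^2$ alone yields $u\in H^s_0(\TT)$. Then $\L^r u\in L^2$, and your Plancherel and triangle-inequality step gives the estimate verbatim.
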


The rest of this section is devoted to obtain global a priori bounds of solutions of \eqref{Eq1.2}. The following result establishes the range of $\e>0$ in which we may have non trivial solutions.
\begin{lemma}
	\label{Le4.2}
	Let $(\varepsilon,u)\in \R_{>0}\times H^{s}_{0}(\TT)$ be a solution of \eqref{Eq3.1} with $u\not\equiv 0$. Then:
	\begin{enumerate}
		\item[{\rm(a)}] $\varepsilon\in (0,1)$.
		\item[{\rm(b)}] The following identity holds:
		\begin{equation}
			\label{Eq4.2}
			 \frac{\|u\|_{\dot{H}^{r/2}}}{\|u\|_{\dot{H}^{s/2}}}=\sqrt{\varepsilon}.
		\end{equation}
	\item[{\rm(c)}] The following estimate holds:
	$$
	\varepsilon \|u\|_{\dot{H}^s}\leq \|u\|_{\dot{H}^r}+\|u\|_{\dot{H}^1}\|u\|_{L^\infty}.
	$$
	\end{enumerate}
\end{lemma}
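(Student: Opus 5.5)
Everything follows from an energy identity obtained by testing \eqref{Eq3.1} against $u$ in $L^2$. Since $u\in H^{s}_{0}(\TT)$ with $s>1$, $u$ is Lipschitz and periodic, so
\[
\int_{\TT}u^{2}u_{x}\,dx=\frac13\int_{\TT}(u^{3})_{x}\,dx=0 .
\]
By Plancherel, $(\L^{\alpha}u,u)_{L^2}=\sum_{n\neq0}|n|^{\alpha}|\widehat u(n)|^{2}=\|u\|_{\dot H^{\alpha/2}}^{2}$. This gives
\[
0=\|u\|_{\dot H^{r/2}}^{2}-\varepsilon\|u\|_{\dot H^{s/2}}^{2}.
\]
The main point to check is that each term is finite for $r\geq -1$. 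This holds because $u$ is odd, so $\widehat u(0)=0$, and $\L^{r}$ with $r<s$ maps $H^{s}$ into $L^{2}$.

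For (b), since $u\not\equiv0$ is odd, some $\widehat u(n)$ with $n\neq0$ is nonzero, so $\|u\|_{\dot H^{s/2}}>0$. Dividing the identity by $\|u\|_{\dot H^{s/2}}^{2}$ and taking square roots gives \eqref{Eq4.2}.

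For (a), I compare the two seminorms frequency by frequency. Since $r<s$ and $|n|\geq1$, we have $|n|^{r}\leq|n|^{s}$, with equality only when $|n|=1$. Hence $\|u\|_{\dot H^{r/2}}^{2}\leq\|u\|_{\dot H^{s/2}}^{2}$, so $\varepsilon\leq1$. Equality $\varepsilon=1$ would force $\widehat u(n)=0$ for all $|n|\geq2$, so $u=a\sin x$ for some $a\neq0$. Substituting into \eqref{Eq3.1} with $\varepsilon=1$, the right-hand side vanishes because $\L^{r}\sin x=\L^{s}\sin x=\sin x$. The left-hand side is $uu_{x}=\tfrac{a^{2}}{2}\sin 2x\neq0$, a contradiction. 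Thus $\varepsilon\in(0,1)$. Positivity is part of the setting.

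For (c), I rewrite the equation as $\varepsilon\L^{s}u=\L^{r}u-uu_{x}$ and take $L^{2}$ norms. This gives
\[
\varepsilon\|u\|_{\dot H^{s}}\leq\|\L^{r}u\|_{L^{2}}+\|u\|_{L^\infty}\|u_{x}\|_{L^{2}}=\|u\|_{\dot H^{r}}+\|u\|_{L^\infty}\|u\|_{\dot H^{1}},
\]
using the paper's convention $\|u\|_{\dot H^{\alpha}}=\|\L^{\alpha}u\|_{L^2}$. Equivalently, this is Lemma \ref{Le4.1} applied with $c=-u$, $d=0$ and $f=0$.

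The only step requiring care is the exclusion of the endpoint $\varepsilon=1$ in (a), which the explicit nonlinear computation above handles.
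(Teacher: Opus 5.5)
Your proof is correct and follows the paper's argument: the energy identity from testing against $u$, then the frequency-wise comparison $|n|^{r}\le|n|^{s}$ with the single-mode case $a\sin x$ excluded (you make the paper's brief claim explicit via $uu_x=\tfrac{a^2}{2}\sin 2x\neq 0$), and for (c) the $L^2$ bound, which is exactly what the paper's testing against $\Lambda^s u$ plus Cauchy--Schwarz gives. One small slip: for $1<s\le 3/2$, $H^s(\mathbb{T})$ does not embed into Lipschitz functions, but $u\in H^1(\mathbb{T})\cap L^\infty(\mathbb{T})$ already makes $u^3$ absolutely continuous and periodic, so $\int_{\mathbb{T}}u^2u_x\,dx=0$ still holds.
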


\begin{proof}
	Let $(\varepsilon,u)\in \R_{>0}\times H^{s}_{0}(\TT)$ be a solution of \eqref{Eq3.1}. Multiplying the equation by $u$ and integrating over $\TT$, we obtain
	\begin{equation}
		\label{Eq4.3}
		\|u\|_{\dot{H}^{r/2}}^{2}-\varepsilon\|u\|_{\dot{H}^{s/2}}^{2}=0.
	\end{equation}
	Therefore, if $u\not\equiv 0$ is a nontrivial solution of \eqref{Eq3.3}, necessarily $\e>0$. For $u\not\equiv 0$, as $r<s$, due to the symmetry and Poincaré inequality, we always have
	\begin{equation}
	\label{Eq4.4}
	\|u\|_{\dot{H}^{r/2}} \leq \|u\|_{\dot{H}^{s/2}}.
	\end{equation}
	Therefore, combining this with the previous inequality \eqref{Eq4.3} we obtain $\varepsilon\leq 1$. Note that equality in \eqref{Eq4.4} can only occur for functions supported on a single frequency (that is $u(x)=k\sin x$), which are not solutions of \eqref{Eq3.1} for $k\neq 0$. Hence $\varepsilon < 1$. This proves item~(a). Item (b) follows directly by item (a) and \eqref{Eq4.3}. In order to prove item (c) we just multiply \eqref{Eq3.1} by $\Lambda^s u$ and integrate by parts.
\end{proof}
Now, we turn our attention to $H^{s}_{0}(\TT)$ a priori bounds of \eqref{Eq1.2}. We start by performing a blow-up type arguments required to obtain these estimates.
\begin{lemma}
	\label{Le4.3}
	Let $(\e,u_{n})\in (0,1)\times H^{s}_{0}(\TT)$ be a sequence of nontrivial solutions of equation \eqref{Eq3.3} such that 
	$$
	\lim_{n\to+\infty}\e_{n}=\e_{\ast} \;\; \text{and} \;\;  \lim_{n\to +\infty}\|u_{n}\|_{\dot{H}^{s/2}}=+\infty.
	$$
	Then, it holds that
	\begin{equation}
		\label{Eq4.5}
		\lim_{n\to+\infty}\frac{\|u_{n}\|_{L^{2}}}{\|u_{n}\|_{\dot{H}^{s/2}}}=0.
	\end{equation}
	Moreover, we also have
	\begin{equation}
		\label{Eq4.6}
		\lim_{n\to+\infty}\frac{\|u_{n}\|_{L^{\infty}}}{\|u_{n}\|_{\dot{H}^{s/2}}}=0.
	\end{equation}
\end{lemma}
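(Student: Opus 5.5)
The plan is a blow-up (rescaling) argument combined with the compactness of Sobolev embeddings on $\TT$. Set $M_n:=\|u_n\|_{\dot H^{s/2}}\to+\infty$ and define the normalized functions $v_n:=u_n/M_n\in H^{s}_0(\TT)$, so that $\|v_n\|_{\dot H^{s/2}}=1$. Since $v_n$ is odd, it has zero mean, and Poincaré's inequality gives $\|v_n\|_{L^2}\le \|v_n\|_{\dot H^{s/2}}=1$. Hence $\{v_n\}$ is bounded in $H^{s/2}_0(\TT)$.

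Because $s>1$, we have $s/2>1/2$, so the embeddings $H^{s/2}(\TT)\hookrightarrow \mc C(\TT)$ and $H^{s/2}(\TT)\hookrightarrow L^2(\TT)$ are compact. Extracting a subsequence, we get $v\in H^{s/2}_0(\TT)$ with $v_n\to v$ uniformly on $\TT$ and in $L^2(\TT)$. Oddness passes to the limit, so $v$ is a continuous odd function and $v(0)=0$. The two claims \eqref{Eq4.5} and \eqref{Eq4.6} are exactly $\|v_n\|_{L^2}\to0$ and $\|v_n\|_{L^\infty}\to0$. It therefore suffices to show that every such subsequential limit is $v\equiv0$. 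The usual subsequence principle then gives convergence of the whole sequence: every subsequence has a further subsequence along which both ratios tend to $0$.

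To identify the limit, divide \eqref{Eq3.1} for $u_n$ by $M_n^2$:
\[
v_n (v_n)_x=\tfrac12\big(v_n^2\big)_x=\frac{1}{M_n}\big(\L^{r}v_n-\e_n\L^{s}v_n\big).
\]
Test against an arbitrary smooth $\varphi\in\mc C^\infty(\TT)$, integrate by parts on the left, and use that the Fourier multipliers are symmetric on the right:
\[
-\frac12\int_{\TT} v_n^2\,\varphi_x\,dx=\frac{1}{M_n}\int_{\TT} v_n\big(\L^{r}\varphi-\e_n\L^{s}\varphi\big)\,dx .
\]
The right-hand side is bounded by $M_n^{-1}\|v_n\|_{L^2}\big(\|\L^r\varphi\|_{L^2}+\sup_n\e_n\|\L^s\varphi\|_{L^2}\big)$. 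This tends to $0$, since $\e_n\to\e_\ast$ is bounded, $\|v_n\|_{L^2}\le 1$, and $M_n\to\infty$. On the left, $v_n^2\to v^2$ uniformly. Hence $\int_{\TT} v^2\varphi_x\,dx=0$ for all smooth $\varphi$, so $(v^2)_x=0$ in the sense of distributions. Thus $v^2$ is constant on $\TT$. Since $v$ is continuous with $v(0)=0$, this constant is $0$, and $v\equiv0$.

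The only potentially delicate point is justifying the passage to the limit in the quadratic term. This is where $s>1$ enters: it gives the compact embedding into $\mc C(\TT)$, so $v_n^2\to v^2$ strongly. A weak $H^{s/2}$ limit alone would not control the nonlinearity. Note also that the assumption $\e_n\to\e_\ast$ is used only for the boundedness of $\e_n$. As a by-product, the strong convergence $v_n\to0$ in $\dot H^{r/2}$ (compact, as $r<s$) together with identity \eqref{Eq4.2}, $\|v_n\|_{\dot H^{r/2}}=\sqrt{\e_n}$, forces $\e_\ast=0$. This remark is not needed for the statement itself.
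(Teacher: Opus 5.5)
Your proof is correct and follows the same blow-up strategy as the paper: normalize by $\|u_n\|_{\dot H^{s/2}}$, extract a compact limit, show $(v^2)_x=0$ in $\mathcal{D}'(\mathbb{T})$, conclude $v\equiv 0$, and upgrade to $L^\infty$ via the compact embedding into $\mathcal{C}(\mathbb{T})$.

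The genuine difference is how you kill the linear term. The paper claims the strong bound $\|v_n(v_n)_x\|_{L^2}\le C_0/\|u_n\|_{\dot H^{s/2}}$. That bound needs uniform control of $\|\Lambda^r v_n\|_{L^2}$ and $\|\Lambda^s v_n\|_{L^2}$, and the normalization $\|v_n\|_{\dot H^{s/2}}=1$ gives neither. The second quantity equals $\|u_n\|_{\dot H^{s}}/\|u_n\|_{\dot H^{s/2}}$, which has no a priori upper bound, and the first fails once $r>s/2$. Your weak formulation moves the self-adjoint multipliers onto the test function $\varphi$. It therefore only uses $\|v_n\|_{L^2}\le 1$, and it is the version of this step that actually holds.

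You also extract the limit in $\mathcal{C}(\mathbb{T})$ rather than in $L^4$, which makes the implication ``$v^2$ constant $\Rightarrow v\equiv 0$'' rigorous through $v(0)=0$. For a merely $L^4$ odd limit, constancy of $v^2$ does not exclude $v=c\,\mathrm{sgn}(x)$. The paper's ``cannot be a nonzero constant'' tacitly relies on the continuity you invoke explicitly. You also state the subsequence principle, which the paper uses implicitly.

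Your closing remark that the hypotheses force $\varepsilon_\ast=0$ is correct. It is exactly the contradiction argument the paper uses in the subsequent $H^{s/2}$ a priori bound.
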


\begin{proof}
	For each $n\in\mathbb N$ define
	\[
	v_n:=\frac{u_n}{\|u_n\|_{\dot H^{s/2}}}.
	\]
	Then $v_n\in H^{s}_{0}(\mathbb T)$, $\|v_n\|_{\dot H^{s/2}}=1$ for all $n$, and $v_n$ satisfies
	\begin{equation}\label{Eq4.7}
		v_n (v_n)_x
		=
		\frac{\L^{r}v_n-\varepsilon_n \L^{s}v_n}{\|u_n\|_{\dot H^{s/2}}}
		\qquad\text{in } \mathcal D'(\mathbb T).
	\end{equation}
	Since $r<s$ and $\|v_n\|_{\dot H^{s/2}}=1$, we have that
	\[
	\|\L^{r}v_n\|_{L^{2}}^{2}\leq 1
	\]
	Since $\varepsilon_n\to\varepsilon_\ast$, there exists $M>0$ such that $0<\varepsilon_n\le M$ for all $n$.
	Therefore,
	\begin{equation}\label{Eq4.8}
		\|\L^{r}v_n-\varepsilon_n\L^{s}v_n\|_{L^{2}}
		\le \|\L^{r}v_n\|_{L^{2}}+\varepsilon_n\|\L^{s}v_n\|_{L^{2}}
		\le C_0,
	\end{equation}
	with $C_0$ independent of $n$.
	Dividing \eqref{Eq4.8} by $\|u_n\|_{\dot H^{s/2}}\to\infty$ and using \eqref{Eq4.7}, we obtain
	\begin{equation}\label{Eq4.9}
		\|v_n (v_n)_x\|_{L^{2}}
		\le
		\frac{\|\L^{r}v_n-\varepsilon_n\L^{s}v_n\|_{L^{2}}}{\|u_n\|_{\dot H^{s/2}}}
		\le \frac{C_0}{\|u_n\|_{\dot H^{s/2}}}
		\longrightarrow 0.
	\end{equation}
	Since $(v_n)$ is bounded in $H^{s/2}(\mathbb T)$ and $s > 1$, the embedding
	$H^{s/2}(\mathbb T)\hookrightarrow L^{4}(\mathbb T)$ is compact (indeed $s/2>1/4$ in dimension one).
	Hence, up to a subsequence,
	\[
	v_n \rightharpoonup v_\ast \ \text{in } H^{s/2}(\mathbb T),
	\qquad
	v_n \to v_\ast \ \text{in } L^{4}(\mathbb T).
	\]
	In particular, $v_n^2\to v_\ast^2$ in $L^{2}(\mathbb T)$.
	
	Let $\varphi\in \mc{C}^\infty(\mathbb T)$. Using periodicity and integration by parts,
	\[
	\int_{\mathbb T} v_n^2\,\varphi_x\,dx
	=
	-2\int_{\mathbb T} v_n (v_n)_x\,\varphi\,dx.
	\]
	By \eqref{Eq4.9}, the right-hand side tends to $0$ as $n\to\infty$ (since $\varphi\in L^\infty$).
	Passing to the limit on the left-hand side using $v_n^2\to v_\ast^2$ in $L^2$, we obtain
	\[
	\int_{\mathbb T} v_\ast^2\,\varphi_x\,dx=0
	\qquad\text{for all }\varphi\in \mc{C}^\infty(\mathbb T).
	\]
	Therefore $(v_\ast^2)_x=0$ in $\mathcal D'(\mathbb T)$, hence $v_\ast^2$ is constant on $\mathbb T$.
	Since each $v_n$ is odd with mean zero, the same holds for $v_\ast$; in particular $v_\ast$ cannot be a nonzero constant.
	Thus $v_\ast\equiv 0$.
	
	Consequently, the whole sequence satisfies $v_n\to0$ in $L^{2}(\mathbb T)$, and hence
	\[
	\frac{\|u_n\|_{L^{2}}}{\|u_n\|_{\dot H^{s/2}}}
	=
	\|v_n\|_{L^{2}}
	\longrightarrow 0,
	\]
	which proves \eqref{Eq4.5}.
	Since $s>1$, then $s/2>1/2$ and the embedding $H^{s/2}(\mathbb T)\hookrightarrow \mc{C}(\mathbb T)$ is compact.
	Hence, up to a subsequence, $v_n\to v_\ast$ in $\mc{C}(\mathbb T)$, and since $v_\ast\equiv 0$ we get
	$\|v_n\|_{L^\infty}\to0$.
	Therefore
	\[
	\frac{\|u_n\|_{L^\infty}}{\|u_n\|_{\dot H^{s/2}}}
	=
	\|v_n\|_{L^\infty}
	\longrightarrow 0,
	\]
	which proves \eqref{Eq4.6}.
\end{proof}

Thanks to Lemma \ref{Le4.3}, next we obtain $H^{s/2}$-estimates for the solutions of \eqref{Eq1.2}.

\begin{lemma}\label{Le4.4}
	Let $\delta\in(0,1)$. Then there exists a constant $C_1(\delta)>0$ such that every solution
	$(\varepsilon,u)\in(\delta,+\infty)\times H^{s}_{0}(\mathbb T)$ of \eqref{Eq3.3} satisfies
	\begin{equation}\label{Eq4.10}
		\|u\|_{H^{s/2}}\le C_{1}(\delta).
	\end{equation}
	Moreover, there exists $C_2(\delta)>0$ such that every such solution also satisfies
	\begin{equation}\label{Eq4.11}
		\|u\|_{L^{\infty}}\le C_{2}(\delta).
	\end{equation}
	In particular, the estimates are uniform for $\varepsilon\in[\delta,1)$, whereas for $\varepsilon\ge 1$ any solution is trivial.
\end{lemma}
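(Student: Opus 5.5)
The plan is to argue by contradiction, combining the blow-up Lemma \ref{Le4.3} with the identity \eqref{Eq4.2} of Lemma \ref{Le4.2}. By Lemma \ref{Le4.2}(a), any nontrivial solution has $\varepsilon\in(0,1)$, so for $\varepsilon\geq 1$ only $u\equiv 0$ occurs and both estimates hold trivially. It therefore suffices to treat nontrivial solutions with $\varepsilon\in(\delta,1)$.

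Since $u$ is odd, it has zero mean. Hence $\widehat u(0)=0$, and $\|u\|_{L^2}\le\|u\|_{\dot H^{s/2}}$, so $\|u\|_{H^{s/2}}\le\sqrt2\,\|u\|_{\dot H^{s/2}}$. It is thus enough to bound the homogeneous norm. Suppose no such $C_1(\delta)$ exists. Then there is a sequence of nontrivial solutions $(\varepsilon_n,u_n)$ with $\varepsilon_n\in(\delta,1)$ and $\|u_n\|_{\dot H^{s/2}}\to+\infty$. Passing to a subsequence, we may assume $\varepsilon_n\to\varepsilon_\ast\in[\delta,1]$. Lemma \ref{Le4.3} then applies and yields \eqref{Eq4.5}:
\[
\|u_n\|_{L^2}/\|u_n\|_{\dot H^{s/2}}\to0 .
\]

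The key step is to contradict \eqref{Eq4.2}, which gives
\[
\|u_n\|_{\dot H^{r/2}}=\sqrt{\varepsilon_n}\,\|u_n\|_{\dot H^{s/2}}\ge\sqrt\delta\,\|u_n\|_{\dot H^{s/2}} .
\]
We estimate $\|u_n\|_{\dot H^{r/2}}$ from above in two cases.
\begin{itemize}
\item If $r\le 0$, then $|n|^{r}\le1$ for $n\neq0$, so $\|u_n\|_{\dot H^{r/2}}\le\|u_n\|_{L^2}$.
\item If $0<r<s$, Hölder's inequality in Fourier variables gives the interpolation bound
\[
\|u_n\|_{\dot H^{r/2}}\le\|u_n\|_{L^2}^{1-r/s}\,\|u_n\|_{\dot H^{s/2}}^{r/s}.
\]
\end{itemize}
In either case, dividing by $\|u_n\|_{\dot H^{s/2}}$ gives
\[
\sqrt\delta\le\Big(\|u_n\|_{L^2}/\|u_n\|_{\dot H^{s/2}}\Big)^{\theta},\qquad \theta=\min\{1,\,1-r/s\}>0 .
\]
The right-hand side tends to $0$ by \eqref{Eq4.5}, which is a contradiction. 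This proves \eqref{Eq4.10}.

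Finally, \eqref{Eq4.11} follows from the Sobolev embedding $H^{s/2}(\mathbb T)\hookrightarrow \mathcal C(\mathbb T)$, valid because $s/2>1/2$. Concretely, $\|u\|_{L^\infty}\le C\sum_{n\neq0}|\widehat u(n)|\le C\,\|u\|_{\dot H^{s/2}}\big(\sum_{n\ne0}|n|^{-s}\big)^{1/2}$, so we may take $C_2(\delta)=C\,C_1(\delta)$.

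The only delicate point is the interplay between Lemma \ref{Le4.3} and \eqref{Eq4.2}. The blow-up lemma only says that mass concentrates in high frequencies, while the energy identity forces a fixed proportion $\sqrt{\varepsilon}\ge\sqrt\delta$ of $\dot H^{s/2}$-energy to be visible at the lower regularity $r/2<s/2$. The interpolation step is what converts this into a contradiction, and it is exactly where the lower bound $\varepsilon>\delta$ is needed.
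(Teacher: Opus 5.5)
Your proof is correct and follows the paper's argument. It argues by contradiction and uses Lemma~\ref{Le4.3} to get $\|u_n\|_{L^2}/\|u_n\|_{\dot H^{s/2}}\to0$, then contradicts the energy identity $\|u_n\|_{\dot H^{r/2}}^2=\varepsilon_n\|u_n\|_{\dot H^{s/2}}^2\ge\delta\|u_n\|_{\dot H^{s/2}}^2$, and finishes with the Poincaré and Sobolev bounds. The only difference is that you close the contradiction with an explicit Fourier-side interpolation, splitting $r\le0$ from $0<r<s$, instead of the paper's compact embedding $H^{s/2}\hookrightarrow H^{r/2}$; this is a quantitative version of the same step and handles the case $r\le0$ slightly more cleanly.
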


\begin{proof}
	Fix $\delta\in(0,1)$ and let $(\varepsilon,u)\in(\delta,+\infty)\times H_0^s(\mathbb T)$ be a solution of \eqref{Eq3.3}.
	If $u\equiv 0$, the conclusion is trivial. Assume $u\not\equiv 0$.
	By Lemma~\ref{Le4.2}(a), any nontrivial solution satisfies $\varepsilon\in(0,1)$; hence, necessarily $\varepsilon\in(\delta,1)$.
	We claim that there exists $C(\delta)>0$ such that every solution $(\varepsilon,u)$ of \eqref{Eq3.3} with $\varepsilon\in[\delta,1)$ satisfies
	\begin{equation}\label{Eq4.12}
		\|u\|_{\dot H^{s/2}}\le C(\delta).
	\end{equation}
	Suppose by contradiction that \eqref{Eq4.12} fails. Then there exist solutions
	\[
	(\varepsilon_n,u_n)\in[\delta,1)\times H_0^s(\mathbb T),\qquad u_n\not\equiv0,
	\]
	such that
	\[
	\lim_{n\to+\infty}\|u_n\|_{\dot H^{s/2}}= +\infty.
	\]
	Since $(\varepsilon_n)$ is bounded in $[\delta,1)$, up to a subsequence we may assume
	\[
	\lim_{n\to+\infty}\varepsilon_n=\varepsilon_\ast\in[\delta,1).
	\]
	Therefore the hypotheses of Lemma~\ref{Le4.3} apply, and defining
	\[
	v_n:=\frac{u_n}{\|u_n\|_{\dot H^{s/2}}},
	\]
	we have $\|v_n\|_{\dot H^{s/2}}=1$ for all $n$ and
	\begin{equation}\label{Eq4.13}
		\lim_{n\to+\infty}v_n=0\quad\text{in }L^2(\mathbb T).
	\end{equation}
	On the other hand, each $u_n$ satisfies the energy identity \eqref{Eq4.3},
	\[
	\|u_n\|_{\dot H^{r/2}}^2=\varepsilon_n\|u_n\|_{\dot H^{s/2}}^2.
	\]
	Dividing by $\|u_n\|_{\dot H^{s/2}}^2$ yields
	\begin{equation}\label{Eq4.14}
		\|v_n\|_{\dot H^{r/2}}^2=\varepsilon_n\ge \delta.
	\end{equation}
	Now $(v_n)$ is bounded in $H^{s/2}(\mathbb T)$ (indeed $\|v_n\|_{H^{s/2}}^2=\|v_n\|_{L^2}^2+1$), and since $r<s$
	the embedding
	\[
	H^{s/2}(\mathbb T)\hookrightarrow H^{r/2}(\mathbb T)
	\]
	is compact. Hence, up to a subsequence,
	\[
	v_n\to v_\ast\quad\text{strongly in }H^{r/2}(\mathbb T).
	\]
	In particular, $v_n\to v_\ast$ in $L^2(\mathbb T)$, and by \eqref{Eq4.13} we get $v_\ast\equiv 0$.
	Therefore $\|v_n\|_{\dot H^{r/2}}\to 0$, contradicting \eqref{Eq4.14}. This proves \eqref{Eq4.12}.
	
	Since $u\in H_0^s(\mathbb T)$ has mean zero, $\widehat u(0)=0$ and thus $|k|^{s}\ge 1$ for all $k\neq0$ implies
	\[
	\|u\|_{L^2}^2  \leq \|u\|_{\dot H^{s/2}}^2.
	\]
	Hence,
	\[
	\|u\|_{H^{s/2}}^2=\|u\|_{L^2}^2+\|u\|_{\dot H^{s/2}}^2
	\le 2\|u\|_{\dot H^{s/2}}^2
	\le 2C(\delta)^2.
	\]
	Therefore \eqref{Eq4.10} holds with
	\[
	C_1(\delta):=\sqrt{2}\,C(\delta).
	\]
	Since $s>1$, then $s/2>1/2$ and the Sobolev embedding $H^{s/2}(\mathbb T)\hookrightarrow L^\infty(\mathbb T)$ yields
	\[
	\|u\|_{L^\infty}\le C_{\mathrm{emb}}\|u\|_{H^{s/2}}
	\le C_{\mathrm{emb}}\,C_1(\delta).
	\]
	Thus \eqref{Eq4.11} holds with $C_2(\delta):=C_{\mathrm{emb}}\,C_1(\delta)$.
\end{proof}

Finally, we obtain the required $H^s$-estimates.

\begin{lemma}\label{Le4.5}
	Let $\delta\in(0,1)$. Then there exists a constant $C(\delta)>0$ such that every solution
	$(\varepsilon,u)\in(\delta,+\infty)\times H^s_0(\mathbb T)$ of \eqref{Eq3.3} satisfies
	\[
	\|u\|_{H^s}\le C(\delta).
	\]
	In particular, the bound is uniform for $\varepsilon\in[\delta,1)$, whereas for $\varepsilon\ge1$ any solution is trivial.
\end{lemma}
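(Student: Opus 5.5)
We bootstrap from the $H^{s/2}$ and $L^\infty$ bounds of Lemma~\ref{Le4.4} to an $H^s$ bound, using the energy-type estimate of Lemma~\ref{Le4.2}(c). First note that if $\varepsilon\ge 1$ then $u\equiv 0$ by Lemma~\ref{Le4.2}(a), so we may assume $u\not\equiv0$ and $\varepsilon\in(\delta,1)$. Lemma~\ref{Le4.2}(c) gives
\[
\delta\|u\|_{\dot H^{s}}\le \varepsilon\|u\|_{\dot H^{s}}\le \|u\|_{\dot H^{r}}+\|u\|_{L^\infty}\|u\|_{\dot H^{1}},
\]
and by \eqref{Eq4.11} the factor $\|u\|_{L^\infty}$ is at most $C_2(\delta)$. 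Thus it suffices to absorb the two lower-order norms $\|u\|_{\dot H^r}$ and $\|u\|_{\dot H^1}$ into the left-hand side.

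For this we will use the interpolation inequality on the Fourier side between $\dot H^{s/2}$ and $\dot H^{s}$. Since $r<s$ and $1<s$, each exponent $\alpha\in\{r,1\}$ satisfies $\alpha<s$. If $\alpha\le s/2$, then (mean zero, $|n|\ge1$) $\|u\|_{\dot H^{\alpha}}\le\|u\|_{\dot H^{s/2}}\le C_1(\delta)$ directly; note that for $r\in[-1,0)$ this is still fine since $|n|^{r}\le |n|^{s/2}$. If $s/2<\alpha<s$, Hölder in Fourier variables gives $\|u\|_{\dot H^{\alpha}}\le \|u\|_{\dot H^{s/2}}^{\theta}\|u\|_{\dot H^{s}}^{1-\theta}$ with $\theta=2(s-\alpha)/s\in(0,1)$. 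Young's inequality then yields, for any $\eta>0$, $\|u\|_{\dot H^\alpha}\le \eta\|u\|_{\dot H^s}+C_\eta\|u\|_{\dot H^{s/2}}$.

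Choosing $\eta$ so that $\eta(1+C_2(\delta))\le\delta/2$, we absorb these terms and obtain $\|u\|_{\dot H^s}\le C(\delta)$, where $C(\delta)$ depends only on $\delta$, $C_1(\delta)$ and $C_2(\delta)$. Finally, $\|u\|_{L^2}\le\|u\|_{\dot H^{s/2}}\le C_1(\delta)$ gives the full $H^s$ bound. There is no real obstacle here. The one point to check is that the estimate in Lemma~\ref{Le4.2}(c) is legitimate for $u\in H^s_0$: the term $(uu_x,\Lambda^s u)_{L^2}$ is bounded by $\|u\|_{L^\infty}\|u_x\|_{L^2}\|u\|_{\dot H^s}$ after dividing by $\|u\|_{\dot H^s}$, which is finite since $u_x\in L^2$ when $s>1$. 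Everything else is routine interpolation.
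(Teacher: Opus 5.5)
Your proof is correct and follows essentially the same route as the paper. Both arguments start from $\varepsilon\|u\|_{\dot H^s}\le\|u\|_{\dot H^r}+\|u\|_{L^\infty}\|u\|_{\dot H^1}$ (the paper takes $L^2$ norms in the equation, you cite Lemma~\ref{Le4.2}(c)), bound $\|u\|_{\dot H^{s/2}}$ and $\|u\|_{L^\infty}$ via Lemma~\ref{Le4.4}, and interpolate $\dot H^r$ and $\dot H^1$ between $\dot H^{s/2}$ and $\dot H^s$; the only cosmetic difference is that you absorb with Young's inequality, whereas the paper reaches $\delta X\le A(\delta)+B(\delta)X^{\beta}$ with $\beta<1$ and divides by $X^{\beta}$.
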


\begin{proof}
	Fix $\delta\in(0,1)$ and let $(\varepsilon,u)\in(\delta,+\infty)\times H^s_0(\mathbb T)$ solve \eqref{Eq3.3}.
	If $u\equiv0$ there is nothing to prove. Otherwise, by Lemma~\ref{Le4.2}(a) we have $\varepsilon\in(0,1)$, hence
	$\varepsilon\in[\delta,1)$. By Lemma~\ref{Le4.4}, there exists $M(\delta)>0$ such that every solution with
	$\varepsilon\in[\delta,1)$ satisfies
	\begin{equation*}
		\|u\|_{H^{s/2}}\le M(\delta).
	\end{equation*}
	Since $s>1$, we have the Sobolev embedding $H^{s/2}(\mathbb T)\hookrightarrow L^\infty(\mathbb T)$, hence
	\begin{equation}\label{Eq4.15}
		\|u\|_{L^\infty}\le C\,M(\delta).
	\end{equation}
	Moreover, since $u\in H^s_0(\mathbb T)$ has mean zero, $\|u\|_{L^2}\le \|u\|_{\dot H^{s/2}}\le M(\delta)$.
	Taking $L^2$--norms in \eqref{Eq3.3}, we obtain that
	\begin{equation}\label{Eq4.16}
		\varepsilon\,\|u\|_{\dot H^{s}}\leq
		\|u\|_{\dot H^{r}}+\|u u_x\|_{L^2}.
	\end{equation}
	Set $X:=\|u\|_{\dot H^s}$.
	If $r\le s/2$, then $\|u\|_{\dot H^{r}}\le \|u\|_{\dot H^{s/2}}\le M(\delta)$.
	If $s/2<r<s$, interpolate between $\dot H^{s/2}$ and $\dot H^s$:
	\[
	\|u\|_{\dot H^{r}}
	\le \|u\|_{\dot H^{s/2}}^{\theta_r}\,\|u\|_{\dot H^{s}}^{1-\theta_r},
	\qquad
	\theta_r=\frac{s-r}{s-s/2}=\frac{2(s-r)}{s},
	\]
	so
	\begin{equation}\label{Eq4.17}
		\|u\|_{\dot H^{r}}
		\le M(\delta)^{\theta_r}\,X^{\beta_r},
		\qquad
		\beta_r:=1-\theta_r=\frac{2r-s}{s}\in(0,1).
	\end{equation}
	Using \eqref{Eq4.15} we have
	\[
	\|u u_x\|_{L^2}\le \|u\|_{L^\infty}\|u_x\|_{L^2}\le C\,M(\delta)\,\|u\|_{\dot H^1}.
	\]
	If $s\ge2$, then $1\le s/2$ and hence $\|u\|_{\dot H^1}\le \|u\|_{\dot H^{s/2}}\le M(\delta)$, so
	\begin{equation}\label{Eq4.18}
		\|u u_x\|_{L^2}\le C\,M(\delta)^2.
	\end{equation}
	If $1<s<2$, then $s/2<1<s$ and we interpolate between $\dot H^{s/2}$ and $\dot H^s$:
	\[
	\|u\|_{\dot H^1}\le \|u\|_{\dot H^{s/2}}^{\theta_1}\,\|u\|_{\dot H^s}^{1-\theta_1},
	\qquad
	\theta_1=\frac{s-1}{s-s/2}=\frac{2(s-1)}{s},
	\]
	hence
	\begin{equation}\label{Eq4.19}
		\|u u_x\|_{L^2}
		\le C\,M(\delta)^{1+\theta_1}\,X^{\beta_1},
		\qquad
		\beta_1:=1-\theta_1=\frac{2-s}{s}\in(0,1).
	\end{equation}
	Combining \eqref{Eq4.16} with \eqref{Eq4.17} and \eqref{Eq4.18}--\eqref{Eq4.19}, and using $\varepsilon\ge\delta$,
	we obtain an inequality of the form
	\begin{equation}\label{Eq4.20}
		\delta X \le A(\delta) + B(\delta) X^{\beta},
	\end{equation}
	where $\beta:=\max\{\beta_r,\beta_1,0\}\in[0,1)$ and $A(\delta),B(\delta)>0$ depend only on $\delta,r,s$
	(via the bound $M(\delta)$), but not on the particular solution.
	
	If $X\le1$ we are done. If $X\ge1$, dividing \eqref{Eq4.20} by $X^\beta$ yields
	\[
	\delta\,X^{1-\beta}\le A(\delta) X^{-\beta}+B(\delta)\le A(\delta)+B(\delta),
	\]
	hence
	\begin{equation*}
		X\le \left(\frac{A(\delta)+B(\delta)}{\delta}\right)^{\frac{1}{1-\beta}}
		=:C_*(\delta).
	\end{equation*}
	Finally,
	\[
	\|u\|_{H^s}\le \|u\|_{L^2}+\|u\|_{\dot H^s}\le M(\delta)+C_*(\delta)=:C(\delta).
	\]
	This completes the proof.
\end{proof}

\section{Global bifurcation theory}\label{Se5}

This section is devoted to study the global structure of the connected component
$$\mathscr{C}_{1}\subset \R_{>0}\times H^{s}_{0}(\mathbb{T}),$$
of nontrivial solutions emanating from the bifurcation point $(\s_{1},0)\in \R_{>0}\times H^{s}_{0}(\mathbb{T})$. See Theorem \ref{Th3.6}. 

We start this section with the required abstract tools to perform a global bifurcation analysis. We state the global alternative theorem for the bifurcation of nonlinear Fredholm operators of index zero given by López-Gómez and Mora-Corral in \cite{LGMC} and sharpened in \cite{LGSM} in the light of the Fitzpatrick, Pejsachowicz and Rabier topological degree \cite{FPRa, FPRb, PR}, a generalization of the Leray--Schauder degree to Fredholm operators of index zero. Throughout this section, we consider a function of class $\mc{C}^{1}$, $\mathfrak{F}:\mathbb{R}_{>0}\times U\to V$, such that
\begin{enumerate}
	\item[(F1)] $\mf{F}$ is orientable in the sense of Fitzpatrick, Pejsachowicz and Rabier, see \cite{FPRa}.
	\item[(F2)] $\mathfrak{F}(\e,0)=0$ for all $\e>0$.
	\item[(F3)] $\partial_{u}\mathfrak{F}(\e,u)\in\Phi_{0}(U,V)$ for every $\e>0$ and $u\in U$.
	\item[(F4)] $\mathfrak{F}$ is proper on bounded and closed subsets of $\mathbb{R}_{>0}\times U$.
	\item[(F5)] $\Sigma(\mf{L})$ is a discrete subset of $\R_{>0}$, where recall that $\mf{L}(\e):=\partial_{u}\mf{F}(\e,0)$, $\e>0$.
\end{enumerate}
The \textit{trivial branch} is the subset 
$$\mc{T}:=\{(\e,0): \e>0\}\subset \R_{>0}\times U,$$
and the \textit{set of non-trivial solutions} is defined by
\begin{equation*}
	\mf{S}=\left[ \mf{F}^{-1}(0)\backslash \mc{T}\right]\cup \{(\e,0):\;\e\in \Sigma(\mf{L})\}.
\end{equation*}
The global alternative theorem reads as follows:
\begin{theorem}[\textbf{Global alternative}]
	\label{Th5.1} Let $\mf{F}\in\mc{C}^{1}(\R_{>0}\times U, V)$ be a map satisfying {\rm(F1)--(F5)}  and $\mathscr{C}$ be a connected component of the set of non-trivial solutions $\mf{S}$ such that $(\e_{0},0)\in\mf{C}$. If the linearization $\mf{L}(\e):=\partial_{u}\mf{F}(\e,0)$, $\e\in \R_{>0}$, is analytic and the oddity condition
	$$\chi[\mf{L},\varepsilon_0]\in 2\N-1,$$
	holds for some $\varepsilon_0\in\Sigma(\mf{L})$, then one of the following non-excluding alternatives occur:
	\begin{enumerate}
		\item[{\rm(1)}] $\mathscr{C}$ is unbounded.
		\item[{\rm(2)}] There exists $\varepsilon_{1}\in \Sigma(\mf{L})$, $\varepsilon_{1}\neq\varepsilon_{0}$, such that $(\varepsilon_{1},0)\in \mathscr{C}$.
		\item[{\rm(3)}] There exists a sequence $\{(\e_{n},u_{n})\}_{n\in\N}\subset \mathscr{C}$ such that $\e_{n}\to 0$ as $n\to+\infty$.
	\end{enumerate}
\end{theorem}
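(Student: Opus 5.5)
The proof runs the classical Rabinowitz continuation argument, with the Leray--Schauder degree replaced by the Fitzpatrick--Pejsachowicz--Rabier degree $\deg$ for orientable $\mc{C}^1$ Fredholm maps of index zero that are proper on closed bounded sets. We argue by contradiction. Suppose $\mathscr{C}$ is bounded, contains no $(\e_1,0)$ with $\e_1\in\Sigma(\mf{L})\setminus\{\e_0\}$, and stays away from $\e=0$. Then $\mathscr{C}$ lies in some $[a,b]\times \overline{B_R}$ with $a>0$. By (F4), the closed bounded set $\mf{S}\cap([a/2,2b]\times\overline{B_{2R}})$ is compact, so $\mathscr{C}$ is compact.

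\textbf{Isolating neighbourhood.} By a Whyburn-type separation lemma applied to the compact metric space $\mf{S}\cap([a/2,2b]\times\overline{B_{2R}})$, we obtain a bounded open set $\mc{O}\subset\R_{>0}\times U$ with the following properties:
\begin{itemize}
\item $\mathscr{C}\subset\mc{O}$ and $\partial\mc{O}\cap\mf{S}=\emptyset$;
\item $\overline{\mc{O}}\subset[a/2,2b]\times B_{2R}$;
\item the only points of $\mc{O}$ on $\mc{T}$ lie in $(\e_0-\eta,\e_0+\eta)\times\{0\}$, with $\eta$ small enough that this interval meets $\Sigma(\mf{L})$ only at $\e_0$ (possible by (F5)).
\end{itemize}
Write $\mc{O}_\e$ for the $\e$-slice of $\mc{O}$. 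Then $\mf{F}(\e,\cdot)$ has no zeros on $\partial\mc{O}_\e$, except possibly at $u=0$ when $\e$ is near $\e_0$.

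\textbf{Homotopy invariance and contradiction.} For $\e\in(\e_0-\eta,\e_0)$ and $\e\in(\e_0,\e_0+\eta)$, remove a small ball $B_\rho$ around $0$ to get sets $\mc{O}_\e\setminus\overline{B_\rho}$. Since $\mathscr{C}$ is the component of $\mf{S}$ through $(\e_0,0)$ and $\mc{O}$ isolates it, the set $\mc{O}\setminus([\e_0-\eta,\e_0+\eta]\times\overline{B_\rho})$ is admissible for a homotopy in $\e$. Homotopy invariance of the FPR degree (which requires (F1) to fix a consistent orientation along the path) then carries $\deg(\mf{F}(\e,\cdot),\mc{O}_\e\setminus\overline{B_\rho})$ from values of $\e$ outside $\mathrm{proj}_\e\mc{O}$, where the degree is $0$ because the slice is empty, to $\e$ slightly below and slightly above $\e_0$. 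This gives
$$\deg(\mf{F}(\e_0-\eta',\cdot),\mc{O}_{\e_0-\eta'})=\deg(\mf{F}(\e_0+\eta',\cdot),\mc{O}_{\e_0+\eta'}).$$
By excision these values equal the local indices at the isolated zero $u=0$, namely $\mathrm{ind}(\mf{L}(\e_0\mp\eta'),0)$, which are the degrees of the linear isomorphisms $\mf{L}(\e_0\mp\eta')$.

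\textbf{Main step: the index jump.} It remains to show that these two indices differ. This is the parity/generalized-multiplicity result of Fitzpatrick--Pejsachowicz--Rabier together with López-Gómez's algebraic multiplicity: for analytic $\mf{L}$ and an isolated $\e_0\in\Sigma(\mf{L})$,
$$\mathrm{ind}(\mf{L}(\e_0+\eta'),0)=(-1)^{\chi[\mf{L},\e_0]}\,\mathrm{ind}(\mf{L}(\e_0-\eta'),0).$$
To prove this, we would use analyticity to find a Smith-type local factorization of $\mf{L}$ near $\e_0$ (transversalization), and reduce the parity of the path $\mf{L}$ on $[\e_0-\eta',\e_0+\eta']$ to $(-1)^{\chi}$. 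With the oddity condition $\chi\in 2\N-1$ the indices have opposite signs, which contradicts the equality above.

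The expected obstacle is making the degree bookkeeping rigorous in the FPR setting. Its sign depends on orientation along paths, so one must verify that the orientation induced by (F1) on the slices $\mc{O}_\e$ is compatible with the parity formula. I would handle this by computing everything relative to a fixed base point at which $\mf{F}$ is oriented, so that sign changes are governed solely by the parity of $\mf{L}$ across $\e_0$.
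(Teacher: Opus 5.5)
The paper gives no proof of this theorem; it imports it from L\'opez-G\'omez--Mora-Corral and L\'opez-G\'omez--Sampedro. There it is proved by the same Rabinowitz-type scheme you follow: an isolating neighbourhood, the FPR degree, and the parity formula $(-1)^{\chi[\mf{L},\e_0]}$. Your set-up is fine, namely compactness of $\mathscr{C}$ via (F4), Whyburn separation, and treating the index-jump formula as a known result. The degree bookkeeping in your second step, however, has a genuine gap.

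\textbf{The removed tube is not admissible.} The set $\mc{O}\setminus([\e_0-\eta,\e_0+\eta]\times\overline{B_\rho})$ has zeros of $\mf{F}$ on its lateral boundary $\{|\e-\e_0|\le\eta,\ \|u\|=\rho\}$ whenever $\mathscr{C}\neq\{(\e_0,0)\}$. Indeed, $\mathscr{C}$ is connected, so $\|u\|$ takes every small value $\rho$ on it. By compactness and $\mathscr{C}\cap\mc{T}=\{(\e_0,0)\}$, those points accumulate only at $(\e_0,0)$ as $\rho\downarrow 0$, so they have $|\e-\e_0|<\eta$. The punctured homotopy can therefore only be run on each side separately, on $\{\e\ge\e_0+\eta'\}$ and $\{\e\le\e_0-\eta'\}$. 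You must choose $\rho=\rho(\eta')$ after $\eta'$, so that $\overline{\mc{O}}$ contains no nontrivial zero with $\|u\|\le\rho$ and $|\e-\e_0|\ge\eta'$. What this yields is two separate identities, $\deg(\mf{F}(\e_0\pm\eta',\cdot),\mc{O}_{\e_0\pm\eta'}\setminus\overline{B_\rho})=0$. It relates nothing at $\e_0-\eta'$ to anything at $\e_0+\eta'$, so your displayed equality of full-slice degrees does not follow from what precedes it.

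\textbf{The missing homotopy and the wrong reduction.} You need a second homotopy, on the \emph{unpunctured} set $\mc{O}\cap([\e_0-\eta',\e_0+\eta']\times U)$, passing through $\e_0$. It is admissible because:
\begin{itemize}
\item $\mc{O}$ is open and contains $(\e_0,0)$, so for small $\eta'$ it contains a neighbourhood of $[\e_0-\eta',\e_0+\eta']\times\{0\}$, and the trivial zeros are interior;
\item $\partial\mc{O}\cap\mf{S}=\emptyset$ rules out nontrivial zeros on the boundary.
\end{itemize}
This is what gives equality of the full-slice degrees at $\e_0\pm\eta'$.

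You also cannot then pass to local indices by excision. The slices $\mc{O}_{\e_0\pm\eta'}$ generally contain nontrivial zeros, such as points of the bifurcating branch or of other parts of $\mathscr{C}$ over those parameter values. Instead use additivity at $\e=\e_0\pm\eta'$:
$\deg(\mf{F}(\e,\cdot),\mc{O}_\e)=\deg(\mf{F}(\e,\cdot),\mc{O}_\e\setminus\overline{B_\rho})+\mathrm{ind}(\mf{L}(\e),0)$,
combined with the vanishing of the punctured degrees from the one-sided homotopies. Only this combination gives $\mathrm{ind}(\mf{L}(\e_0-\eta'),0)=\mathrm{ind}(\mf{L}(\e_0+\eta'),0)$. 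That equality is what the parity formula contradicts under the oddity condition, with both indices computed in the orientation induced by (F1).
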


We recall that we have already proved that (F2), (F3) and (F5) hold for our operator \eqref{Eq3.2}. The next result establishes that $\mf{F}$ is proper on closed and bounded subsets of $\R_{>0}\times H^{s}_{0}(\TT)$. In other words, that (F4) holds. This property is important in order to recover some compactness needed for the application of the global bifurcation Theorem \ref{Th5.1}. Recall that a map $f:X\to Y$ between two topological spaces $X, Y$ is \textit{proper} if the preimage of every compact set in $Y$ is compact in $X$.

\begin{lemma}
	The operator $\mf{F}$ is proper on closed and bounded subsets of $\R_{>0}\times H^{s}_{0}(\mathbb{T})$.
\end{lemma}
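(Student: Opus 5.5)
We will argue sequentially. Let $K\subset L^{2}_{0}(\TT)$ be compact and $D\subset \R_{>0}\times H^{s}_{0}(\TT)$ closed and bounded. Since $\R_{>0}\times H^{s}_{0}(\TT)$ is a metric space, it suffices to show that every sequence $\{(\e_n,u_n)\}\subset \mf{F}^{-1}(K)\cap D$ has a subsequence converging in $\R_{>0}\times H^{s}_{0}(\TT)$. The limit will then automatically lie in $\mf{F}^{-1}(K)\cap D$, because $D$ is closed, $\mf{F}$ is continuous (Lemma \ref{Le3.3}) and $K$ is closed.

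\emph{Step 1 (extraction).} Set $f_n:=\mf{F}(\e_n,u_n)\in K$. Boundedness of $D$ gives $\|u_n\|_{H^s}\le M$ and $\e_n\le M$. Passing to a subsequence, we may assume $f_n\to f$ in $L^2_0(\TT)$ by compactness of $K$, and $\e_n\to\e_\ast\ge 0$. We also get $u_n\rightharpoonup u$ weakly in $H^s_0(\TT)$. Since the embedding $H^s\hookrightarrow H^{s'}$ is compact for $s'<s$, we may take $u_n\to u$ strongly in $H^{s'}_0(\TT)$ for every $s'<s$; in particular in $\mc{C}(\TT)$ and in $H^1$, using $s>1$ and choosing $s'\in(1,s)$.

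\emph{Step 2 (main obstacle: $\e_\ast>0$).} The domain is $\R_{>0}$, so a closed and bounded subset $D$ need not stay away from $\e=0$. We will read ``closed and bounded'' as closed and bounded in $\R\times H^s_0$ with $D\subset\R_{>0}\times H^{s}_{0}$, which forces $\e_n\ge\d>0$; equivalently, we restrict to sets of the form $[\d,M]\times \overline{B}_M$. This is the setting in which the global alternative is applied; otherwise alternative (3) of Theorem \ref{Th5.1} accounts for the loss of properness near $\e=0$. I expect this point to require explicit care in the write-up, and I would state the convention before giving the argument.

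\emph{Step 3 (strong convergence in $H^s$).} Solving the equation for the top-order term, we write
$$
\e_n\L^s u_n=\L^r u_n-u_n(u_n)_x-f_n .
$$
Since $r<s$, $\L^r u_n=\L^{r-s}\L^s u_n$ is compact in $L^2$, so $\L^r u_n\to \L^r u$ in $L^2$; this uses strong convergence of $u_n$ in $H^{\max(r,s')}$ with $s'<s$. Next, $\|u_n(u_n)_x-uu_x\|_{L^2}\le \|u_n-u\|_{L^\infty}\|u_n\|_{H^1}+\|u\|_{L^\infty}\|u_n-u\|_{H^1}\to0$. Together with $f_n\to f$, the right-hand side converges in $L^2$. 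Dividing by $\e_n\to\e_\ast>0$ shows that $\L^s u_n$ converges in $L^2_0(\TT)$. Since $\L^s:H^s_0\to L^2_0$ is an isomorphism, $u_n\to u$ in $H^s_0(\TT)$. Equivalently, one can invoke the structure $T_{\e,u}=-\e I+K_{\e,u}$ from the Fredholm lemma, namely identity plus compact. This completes the subsequence extraction, and hence properness.
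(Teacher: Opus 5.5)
Your proof is correct. It uses the same analytic mechanism as the paper, but organizes it differently and more economically.

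\textbf{How the routes differ.} The paper restricts to $[\varepsilon_-,\varepsilon_+]\times\bar B_R$ with $\varepsilon_->0$ and uses Berger's characterization: a continuous map is proper iff it is closed and has compact fibres. It then runs the argument twice, once for closedness of $\mathfrak{F}(K)$ and once for compactness of $\mathfrak{F}^{-1}(\phi)\cap K$. Each run does the following:
\begin{enumerate}
\item[(i)] extract a subsequence converging in $\mathcal{C}(\mathbb{T})$ and $H^{q}$, $q=\max\{1,r\}$;
\item[(ii)] pass to the limit in the weak formulation;
\item[(iii)] use the regularity Lemma 4.1 to put the limit back into $H^s_0(\mathbb{T})$;
\item[(iv)] apply estimate (4.1) to the difference equation to upgrade to $H^s$ convergence.
\end{enumerate}
You instead verify sequential compactness of $\mathfrak{F}^{-1}(\text{compact})\cap D$ in a single pass. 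Since the $u_n$ are bounded in $H^s$, the weak limit already lies in $H^s_0(\mathbb{T})$. So the weak formulation and Lemma 4.1 are superfluous, and isolating $\varepsilon_n\Lambda^s u_n$ replaces the difference estimate. Both arguments rest on the same two facts. First, $\Lambda^r u$ and $uu_x$ are compact from bounded sets of $H^s$ into $L^2$. Second, $\varepsilon\Lambda^s$ is invertible with $\varepsilon$ bounded away from $0$. In other words, $\Lambda^{-s}\mathfrak{F}(\varepsilon,\cdot)=-\varepsilon I+\Lambda^{r-s}-\Lambda^{-s}(u u_x)$ is a compact perturbation of $-\varepsilon I$ on bounded sets. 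Note that this concerns the nonlinear map, not the linearization $T_{\varepsilon,u}$ you cite. Your version is shorter and self-contained; the paper's template only pays off when one has a priori bounds in a weaker norm than $H^s$, which is not the case here.

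\textbf{Correction on Step 2.} You are right that keeping $\varepsilon$ away from $0$ is essential. The paper imposes exactly this restriction silently when it says it suffices to treat $[\varepsilon_-,\varepsilon_+]\times\bar B_R$. However, your claim that closed and bounded in $\mathbb{R}\times H^s_0(\mathbb{T})$ with $D\subset\mathbb{R}_{>0}\times H^s_0(\mathbb{T})$ forces $\varepsilon_n\ge\delta>0$ is false in infinite dimensions. The two readings are therefore not equivalent, and the lemma fails under the first.

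For a counterexample, take $u_n=n^{-s}\sin(nx)$. Then $\|u_n\|_{\dot H^s}^2=\pi$ for all $n$, while $u_n\to0$ in $H^{s'}$ for every $s'<s$. The set $D=\{(1/n,u_n):n\in\mathbb{N}\}$ is bounded. It is closed in $\mathbb{R}\times H^s_0(\mathbb{T})$, because no subsequence converges in $H^s$. One checks that $\mathfrak{F}(1/n,u_n)\to0$ in $L^2$, since each of the three terms has $L^2$ norm of order $n^{r-s}$, $n^{-1}$ or $n^{1-2s}$. Hence $K=\{0\}\cup\{\mathfrak{F}(1/n,u_n)\}$ is compact, yet $\mathfrak{F}^{-1}(K)\cap D=D$ is not compact.

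State the convention directly instead: closed bounded sets whose $\varepsilon$-projection lies in a compact subinterval of $(0,\infty)$, for instance closed subsets of $[\delta,M]\times\bar B_M$. Under that convention your Step 3 goes through as written.
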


\begin{proof}
	It suffices to prove that the restriction of $\mf{F}$ to the closed subset
	${K}:=[\e_{-},\e_{+}]\times \bar {B}_{R}$ is proper, where $0<\e_{-}<\e_{+}$ and $B_{R}$ stands for the open ball of $H^{s}_{0}(\mathbb{T})$ of radius $R>0$ centered at $0$. According to
	\cite[Th. 2.7.1]{B}, we must check that $\mf{F}(K)$ is closed in $L^{2}_{0}(\mathbb{T})$, and that, for every $\phi\in L^{2}_{0}(\mathbb{T})$, the set $\mf{F}^{-1}(\phi)\cap {K}$ is compact in $\mathbb{R}_{>0}\times H^{s}_{0}(\mathbb{T})$.
	\par
	To show that $\mf{F}(K)$ is closed in $L^{2}_{0}(\mathbb{T})$, let $\{\phi_{n}\}_{n\in\mathbb{N}}$ be a sequence in $\mf{F}(K)\subset L^{2}_{0}(\mathbb{T})$ such that
	\begin{equation*}
		\lim_{n\to+\infty} \phi_{n}=\phi \quad \text{in } L^{2}_{0}(\mathbb{T}).
	\end{equation*}
	Then, there exists a sequence $\{(\e_{n},u_{n})\}_{n\in\mathbb{N}}$ in $K$ such that \begin{equation}
		\label{Eq5.1}
		\phi_{n}=\mf{F}(\e_{n},u_{n}) \quad \hbox{for all} \;\; n\in\mathbb{N}.
	\end{equation}
	Let $q:=\max\{1,r\}<s$. By the compactness of the embeddings $H^{s}_{0}(\mathbb{T})\hookrightarrow H^{q}_{0}(\mathbb{T})$ and $H^{s}_{0}(\mathbb{T})\hookrightarrow\mc{C}(\mathbb{T})$, we can extract a subsequence $\{(\e_{n_{k}},u_{n_{k}})\}_{k\in\mathbb{N}}$ such that, for some $(\e_{0},u_{0})\in [\e_-,\e_+]\times H^{q}_{0}(\mathbb{T})$,  it holds $\lim_{k\to +\infty} \e_{n_{k}} = \e_{0}$ and
	\begin{equation}
		\label{Eq5.2}
		\lim_{k\to+\infty} u_{n_{k}} =u_{0} \quad \text{in } \mc{C}(\mathbb{T}) \text{ and in }  H^{q}_{0}(\mathbb{T}).
	\end{equation}
	By \eqref{Eq5.1}, we have that $u_{n_k}$ is a weak solution of $\L^{r}u_{n_k}-\e_{n_k}\L^{s}u_{n_k}-u_{n_k}(u_{n_k})_{x}=\phi_{n_k}$. That is, 
	$$
	\int_{\TT}(\L^{r}u_{n_k}-\e_{n_k}\L^{s}u_{n_k})\varphi -\frac{1}{2}\int_{\TT}u^{2}_{n_k}\varphi_{x}=\int_{\mathbb{T}} \phi_{n_k} \varphi, \quad \text{for all} \;\; \varphi\in\mc{C}^{\infty}_{0}(\mathbb{T}). 
	$$
	Then, by \eqref{Eq5.2}, taking $k\to+\infty$ we obtain
	$$
	\int_{\TT}(\L^{r}u_{0}-\e_{0}\L^{s}u_{0})\varphi -\frac{1}{2}\int_{\TT}u^{2}_{0}\varphi_{x}=\int_{\mathbb{T}} \phi \varphi, \quad \text{for all} \;\; \varphi\in\mc{C}^{\infty}_{0}(\mathbb{T}). 
	$$
	Therefore $u_{0}$ must be a weak solution of 
	\begin{equation}
		\label{Eq5.3}
		\L^{r}u_{0}-\e_{0}\L^{s}u_{0}-u_{0}(u_{0})_{x}=\phi.
	\end{equation}
	By the regularity result stated in Lemma \ref{Le4.1}, we deduce that $u_{0}\in H^{s}_{0}(\mathbb{T})$ and $\phi=\mf{F}(\e_{0},u_{0})$.	 Therefore,
	\begin{align*}
		&\L^{r}(u_{n_{k}}-u_{0})-(\e_{n_{k}}-\e_{0})\L^{s}u_{n_{k}}-\e_{0}\L^{s}(u_{n_k}-u_0)\\ 
		&-u_{n_k}(u_{n_k}-u_{0})_{x} - (u_{0})_{x}(u_{n_k}-u_{0})=\phi_{n_{k}}-\phi, \quad x\in \mathbb{T}.
	\end{align*}
Furthermore, by \eqref{Eq4.1} and the boundedness of $K$, we obtain the bound
\begin{align*}
\varepsilon_0 \|u_{n_{k}}-u_{0}\|_{\dot{H}^{s}}\leq \; & |\e_{n_{k}}-\e_{0}|\|u_{n_{k}}\|_{\dot{H}^{s}}+\|u_{n_{k}}-u_{0}\|_{L^\infty}\|u_0\|_{\dot{H}^1}\\
&+\|u_{n_{k}}-u_{0}\|_{\dot{H}^1}\|u_0\|_{L^\infty}+\|u_{n_{k}}-u_{0}\|_{\dot{H}^r}+\|\phi_{n_{k}}-\phi\|_{L^{2}}\\
\leq& \; C(|\e_{n_{k}}-\e_{0}|+\|u_{n_{k}}-u_{0}\|_{\dot{H}^q}+\|\phi_{n_{k}}-\phi\|_{L^{2}}),
\end{align*}
for some positive constant $C>0$.
	Therefore, as $K$ is closed, we infer $(\e_{0},u_{0})\in K$. This proves that $\phi\in \mf{F}(K)$.
	\par
	Now, pick $\phi\in L^{2}_{0}(\mathbb{T})$. To show that $\mf{F}^{-1}(\phi)\cap K$ is compact in $[\e_-,\e_+]\times H^{s}_{0}(\mathbb{T})$, let $\{(\e_{n},u_{n})\}_{n\in\mathbb{N}}$ be a sequence in $\mf{F}^{-1}(\phi)\cap K$. Then,
	\begin{equation*}
		\mf{F}(\e_{n},u_{n})=\phi \quad \hbox{for all}\;\; n\in\mathbb{N}.
	\end{equation*}
	Based again on the compactness of the imbedding $H^{s}_{0}(\mathbb{T}) \hookrightarrow \mc{C}(\mathbb{T})$, we can extract a subsequence $\{(\e_{n_{k}},u_{n_{k}})\}_{k\in\mathbb{N}}$ such that, for some $(\e_{0},u_{0})\in [\e_-,\e_+]\times \mc{C}(\mathbb{T})$,   $\lim_{k\to +\infty} \e_{n_{k}} = \e_{0}$ and \eqref{Eq5.2} holds.
	Similarly,  $u_{0}\in \mc{C}(\mathbb{T})$ is a weak solution of \eqref{Eq5.3} and, by regularity,
	$u_{0}\in H^{s}_{0}(\mathbb{T})$ and $\mf{F}(\e_{0},u_{0})=\phi$. In particular, for every $k\in\mathbb{N}$,
	\begin{align*}
		&\L^{r}(u_{n_{k}}-u_{0})-(\e_{n_{k}}-\e_{0})\L^{s}u_{n_{k}}-\e_{0}\L^{s}(u_{n_k}-u_0)\\ 
		&-u_{n_k}(u_{n_k}-u_{0})_{x} - (u_{0})_{x}(u_{n_k}-u_{0})=0, \quad x\in \mathbb{T}.
	\end{align*}
	By the regularity result of Proposition \ref{Le4.1}, we have the estimate
	\begin{equation*}
		\|u_{n_{k}}-u_{0}\|_{\dot{H}^{s}}\leq C(|\e_{n_{k}}-\e_{0}|+\|u_{n_{k}}-u_{0}\|_{\dot{H}^q}), \quad k\in\N,
	\end{equation*}
	for some positive constant $C>0$.  Therefore, letting $k\to +\infty$ we finally get
	that
	$$
	\lim_{k\to +\infty} (\e_{n_{k}},u_{n_{k}}) = (\e_{0},u_{0})\quad \hbox{in}\;\;
	[\e_-,\e_+] \times H^{s}_{0}(\mathbb{T}).
	$$
	This concludes the proof.
\end{proof}

The next result shows that the connected components $\mathscr{C}_{k}$, $k\geq 1$, live in $(0,1]\times H^{s}_{0}(\mathbb{T})$.
\begin{lemma}
	\label{Le5.3}
	For each $k\in\N$, the following set inclusion holds:
	\begin{equation}
		\label{Eq5.4}
		\mathscr{C}_{k}\subset (0,1]\times H^{s}_{0}(\mathbb{T}).
	\end{equation}
\end{lemma}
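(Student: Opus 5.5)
The inclusion \eqref{Eq5.4} follows from Lemma~\ref{Le4.2}(a) together with the definition of $\mf{S}$ and the fact that $\mathscr{C}_k\subset\mf{S}$. We check that every point $(\e,u)\in\mathscr{C}_k$ satisfies $\e\le 1$, splitting into the two kinds of points that make up $\mf{S}$.

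First, let $(\e,u)\in\mathscr{C}_k$ with $u\not\equiv 0$. Then $(\e,u)\in\mf{F}^{-1}(0)\setminus\mc{T}$, so $u\in H^{s}_{0}(\TT)$ is a nontrivial solution of \eqref{Eq3.1}. Lemma~\ref{Le4.2}(a) then gives $\e\in(0,1)$.

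Second, let $(\e,u)\in\mathscr{C}_k$ with $u\equiv0$. By the definition of $\mf{S}$, necessarily $\e\in\Sigma(\mf{L})=\{j^{r-s}:j\in\N\}$. Since $r<s$, every such value satisfies $j^{r-s}\le 1$, with equality only at $j=1$. Hence $\e\in(0,1]$.

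Combining the two cases gives $\mathscr{C}_k\subset(0,1]\times H^{s}_{0}(\TT)$. I expect no real obstacle here. The only subtle point is the closed endpoint: the value $\e=1$ is attained only at the trivial point $(\s_1,0)=(1,0)$, which lies in $\mathscr{C}_1$. That point is exactly why the inclusion has $(0,1]$ rather than $(0,1)$. No connectedness argument is needed, because the bound holds pointwise on all of $\mf{S}$.
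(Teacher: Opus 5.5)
Your proof is correct and follows essentially the same route as the paper. Both split $\mf{S}$ into the trivial points $(\s_j,0)$, which satisfy $\s_j=j^{r-s}\le 1$, and the nontrivial solutions, which Lemma~\ref{Le4.2}(a) places in $(0,1)$; this gives $\mf{S}\subset(0,1]\times H^{s}_{0}(\mathbb{T})$, and the inclusion for $\mathscr{C}_k$ follows without any connectedness argument.
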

\begin{proof}
Fix $k\in\N$. Recall that $\mathscr{C}_{k}\subset \mf{S}$ and 
$$
\mf{S}\cap \mc{T} = \left\{(\s_{k},0) : k\in\N\right\}\subset (0,1]\times \{0\}.
$$	
Hence, by Lemma \ref{Le4.2}, item (a), necessarily $\mf{S}\subset (0,1]\times H^{s}_{0}(\TT)$ and consequently the inclusion \eqref{Eq5.4} holds.
\end{proof}

Let us state and prove the main result of this paper:
\begin{theorem}
	\label{Th5.4}
	It holds that 
	\begin{equation}
		\label{Eq5.5}
		(2^{r-s},1]\subset \mc{P}_{\e}(\mathscr{C}_{1}),
	\end{equation}
	where $\mc{P}_{\e}:\R_{>0}\times H^{s}_{0}(\mathbb{T})\to \R$, $(\e,u)\mapsto \e$, is the $\e$-projection operator.
\end{theorem}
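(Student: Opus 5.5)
The plan is to apply the global alternative, Theorem~\ref{Th5.1}, to the component $\mathscr{C}_1$, which contains $(\s_1,0)=(1,0)$, and to rule out all three alternatives under the assumption that $\mathscr{C}_1$ never reaches the region $\e\le 2^{r-s}$.

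\textbf{Step 1: hypotheses of Theorem~\ref{Th5.1}.} We have already shown (F2), (F3) and (F5), and the properness lemma gives (F4). The linearization $\mf{L}(\e)=\L^{r}-\e\L^{s}$ is affine in $\e$, hence analytic. By the spectral lemma, $\chi[\mf{L},\s_1]=1\in 2\N-1$, so the oddity condition holds.

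It remains to check orientability (F1), which I expect to be the main technical point. The Fredholm lemma shows that
\[
\partial_u\mf{F}(\e,u)=\L^{s}\bigl(-\e I+K_{\e,u}\bigr),
\]
where $K_{\e,u}$ is compact on $H^s_0(\TT)$. After composing with the fixed isomorphism $\L^{-s}$, the family $\partial_u\mf{F}$ therefore takes values in the set of compact perturbations of a multiple of the identity. On this set there is a natural orientation: the class of operators that are compact perturbations of isomorphisms of the form $cI$, exactly as in the Leray--Schauder setting. By the results of Fitzpatrick--Pejsachowicz--Rabier, such maps are orientable, and their degree reduces to the Leray--Schauder degree up to sign. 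I would cite \cite{FPRa} for this fact rather than reprove it.

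\textbf{Step 2: argument by contradiction.} Suppose \eqref{Eq5.5} fails. By Lemma~\ref{Le5.3}, $\mc{P}_\e(\mathscr{C}_1)\subset(0,1]$. Since $\mathscr{C}_1$ is connected and $\mc{P}_\e$ is continuous, $\mc{P}_\e(\mathscr{C}_1)$ is an interval containing $1$. So if it does not contain $(2^{r-s},1]$, then
\[
\mathscr{C}_1\subset(2^{r-s},1]\times H^s_0(\TT).
\]
We now exclude each alternative of Theorem~\ref{Th5.1} in turn.
\begin{itemize}
\item Alternative (3) is impossible, since $\e$ stays above $2^{r-s}>0$ along $\mathscr{C}_1$.
\item Alternative (2) is impossible, since every other bifurcation value satisfies $\s_k=k^{r-s}\le 2^{r-s}$ for $k\ge2$.
\item For alternative (1), fix any $\delta\in(0,2^{r-s})$. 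Every point of $\mathscr{C}_1$ is a solution with $\e\in(\delta,+\infty)$, so Lemma~\ref{Le4.5} gives $\|u\|_{H^s}\le C(\delta)$ uniformly on $\mathscr{C}_1$. Hence $\mathscr{C}_1$ is bounded. Choosing $\delta$ strictly below $2^{r-s}$ avoids any issue if $\inf\mc{P}_\e(\mathscr{C}_1)=2^{r-s}$.
\end{itemize}
All three alternatives fail, which contradicts Theorem~\ref{Th5.1}.

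\textbf{Step 3: conclusion.} Consequently $\mathscr{C}_1$ contains some point $(\e^*,u^*)$ with $\e^*\le 2^{r-s}$. Since it also contains $(1,0)$, the connected set $\mc{P}_\e(\mathscr{C}_1)\subset\R$ contains $[\e^*,1]\supset(2^{r-s},1]$, which is \eqref{Eq5.5}. For $\e\in(2^{r-s},1)$, the corresponding points of $\mathscr{C}_1$ have $u\neq0$, because $\mathscr{C}_1\cap\mc{T}\subset\{(\s_k,0)\}$. This gives the nontrivial steady states announced in the introduction.
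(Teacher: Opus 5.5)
Your proposal is correct and follows the paper's proof: you apply the global alternative (Theorem~\ref{Th5.1}) to $\mathscr{C}_1$, use Lemma~\ref{Le5.3} and connectedness of $\mc{P}_\e(\mathscr{C}_1)$, dispose of alternatives (2) and (3) via $\s_k\le 2^{r-s}$ for $k\ge 2$, and rule out unboundedness with Lemma~\ref{Le4.5}. Your single contradiction argument is just a repackaging of the paper's case split. The one real difference is (F1): the paper only notes that the convex domain $\R_{>0}\times H^{s}_{0}(\TT)$ is simply connected, so orientability follows from Proposition 1.9 of Fitzpatrick--Pejsachowicz--Rabier, whereas you use the Leray--Schauder structure $\L^{-s}\partial_u\mf{F}(\e,u)=-\e I+K_{\e,u}$. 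Your argument is also valid but heavier than needed.
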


\begin{proof}
	We apply Theorem \ref{Th5.1} to the nonlinearity 
	\begin{equation*}
		\mf{F}:\R_{>0}\times H_{0}^{s}(\mathbb{T})\longrightarrow L_{0}^{2}(\mathbb{T}), \quad \mf{F}(\e,u)=\L^{r}u-\e \L^{s}u - uu_{x}.
	\end{equation*}
As we have already stated, (F2)--(F5) hold for our operator. We have to verify (F1). Indeed, $\mf{F}$ is orientable in the sense of Fitzpatrick, Pejsachowicz and Rabier since the domain $\R_{>0}\times H^{s}_{0}(\mathbb{T})$ is simply connected (see for instance \cite[Pr. 1.9]{FPRa}). Then, the application of Theorem \ref{Th5.1} to the connected component $\mathscr{C}_{1}$ implies that or $\mathscr{C}_{1}$ is unbounded or there exists $\s_{m}\in \Sigma(\mf{L})$, $m\neq 1$, such that $(\s_{m},0)\in\mathscr{C}_{1}$ or there exists a sequence $\{(\e_{n},u_n)\}_{n\in\N}\subset \mathscr{C}_{1}$ such that $\e_n\to 0$.

If the second statement holds then \eqref{Eq5.5} holds trivially. Indeed, in this case, $(\s_1,0), (\s_m,0)\in \mathscr{C}_{1}$ for some $m>1$. Since $\mathscr{C}_{1}$ is connected and $\mc{P}_{\e}$ is continuous, we infer that $\mc{P}_{\e}(\mathscr{C}_{1})$ is a connected subset of $\R$. Therefore, $\mc{P}_{\e}(\mathscr{C}_{1})=I_{\a,\b}$, where $I_{\a,\b}$ is an interval of $\R$ with boundary $\{\a,\b\}$ for some $\alpha\leq \beta$. Moreover, $\alpha \leq \s_{2} < \s_{1}\leq \beta$. Therefore, $(\s_{2},\s_{1}]\subset I_{\a,\b}= \mc{P}_{\e}(\mathscr{C}_{1})$ and \eqref{Eq5.5} is proven.

Suppose that the third alternative of Theorem \ref{Th5.1} holds. Then, there exists a sequence $\{(\e_{n},u_n)\}_{n\in\N}\subset \mathscr{C}_{1}$ with $\e_n\to 0$. Choose $n_0\in\N$ such that $\e_{n_0} < \s_2$. Again, since $\mc{P}_{\e}(\mathscr{C}_{1})$ is connected, necessarily $\mc{P}_{\e}(\mathscr{C}_{1})=I_{\a,\b}$. Moreover, since $(\e_{n_0},u_{n_0})\in \mathscr{C}_{1}$ and $\e_{n_0}<\s_{2}$, we have $\alpha \leq \e_{n_0} < \s_{2} < \s_{1}\leq \beta$. Therefore, $(\s_{2},\s_{1}]\subset I_{\a,\b}= \mc{P}_{\e}(\mathscr{C}_{1})$ and \eqref{Eq5.5} is proven.

Finally, suppose the first alternative of Theorem \ref{Th5.1} holds, that is, suppose that $\mathscr{C}_{1}$ is unbounded. In order to reach a contradiction, suppose also that \eqref{Eq5.5} does not hold. Then, there exist $\a,\b$ with $\s_{2}<\a<\b \leq \s_{1}$ such that 
	$$
	\mc{P}_{\e}(\mathscr{C}_{1}) = I_{\a,\b},
	$$
	where $I_{\a,\b}$ is an interval of $\R$ with boundary $\{\a,\b\}$. The existence and the estimate of $\b$ is justified by Lemma \ref{Le5.3}. In any case, by the unboundedness of $\mathscr{C}_{1}$, there exists a sequence $\{(\e_n,u_n)\}_{n\in\N}\subset \mathscr{C}_{1}$ such that $\e_n\to\e_{0}\in \overline{I_{\a,\b}}$ as $n\to+\infty$ and
	$$
	\lim_{n\to+\infty}\|u_n\|_{H^{s}}=+\infty.
	$$
	But this contradicts Lemma \ref{Le4.5} and concludes the proof.
\end{proof}

We conclude the global analysis by establishing an alternative of the behavior of the solutions when $\e\to 0^+$. 

\begin{theorem}
	\label{Th5.5}
	Let $(\varepsilon_n,u_n)\in\R_{>0}\times H^s_0(\TT)$ be a sequence of nontrivial solutions of \eqref{Eq3.3} such that
	\[
	\lim_{n\to+\infty}\varepsilon_n = 0.
	\]
	Then the following two (non–excluding) alternatives hold:
	\begin{enumerate}
		\item[\textup{(i)}] $\displaystyle \limsup_{n\to+\infty}\|u_n\|_{\dot H^{s/2}}=+\infty$;
		\item[\textup{(ii)}] $\displaystyle \lim_{n\to+\infty}\|u_n\|_{L^\infty}=0$.
	\end{enumerate}
\end{theorem}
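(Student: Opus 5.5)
The plan is to show that if alternative (i) fails, then alternative (ii) holds. Suppose that
$$\limsup_{n\to+\infty}\|u_n\|_{\dot H^{s/2}}<+\infty.$$
Since each $u_n$ is a finite element of $H^{s}_{0}(\TT)$, the whole sequence is then bounded: $\|u_n\|_{\dot H^{s/2}}\le M$ for all $n$. Because $u_n$ has zero mean, the Poincaré-type bound used in the proof of Lemma \ref{Le4.4} gives $\|u_n\|_{H^{s/2}}\le \sqrt2 M$.

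The key input is the energy identity \eqref{Eq4.3} from the proof of Lemma \ref{Le4.2}, obtained by testing \eqref{Eq3.1} against $u_n$ (the nonlinear term integrates to zero):
$$\|u_n\|_{\dot H^{r/2}}^{2}=\varepsilon_n\|u_n\|_{\dot H^{s/2}}^{2}\le \varepsilon_n M^{2}\longrightarrow 0 .$$
Thus $u_n\to 0$ in $\dot H^{r/2}(\TT)$. For $r\ge 0$ this in particular gives convergence to $0$ in $L^2$ after pairing with low frequencies. For $-1\le r<0$ it still yields $\widehat{u_n}(k)\to 0$ for every fixed frequency $k$, since $|\widehat{u_n}(k)|\le |k|^{-r/2}\|u_n\|_{\dot H^{r/2}}$. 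In both cases every Fourier coefficient of $u_n$ tends to zero.

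Next I use compactness. Since $s>1$, the embedding $H^{s/2}(\TT)\hookrightarrow\mc C(\TT)$ is compact, exactly as in Lemma \ref{Le4.3}. Take any subsequence of $(u_n)$. It has a further subsequence converging in $\mc C(\TT)$, hence in $L^2$, to some $u_\ast$. Convergence in $L^2$ implies convergence of each Fourier coefficient, so $\widehat{u_\ast}(k)=\lim\widehat{u_n}(k)=0$ for all $k$. Hence $u_\ast\equiv 0$. Since every subsequence has a sub-subsequence with $\|u_n\|_{L^\infty}\to 0$, the whole sequence satisfies $\|u_n\|_{L^\infty}\to0$, which is (ii).

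The only delicate point is identifying the limit when $r<0$, where $\dot H^{r/2}$-smallness is weaker than $L^2$-smallness. The frequency-by-frequency argument above handles it, so I expect no real obstacle. The argument shows that (i) and (ii) are exhaustive, and they are not exclusive, as the statement indicates.
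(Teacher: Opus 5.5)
Your proof is correct and follows the paper's argument. You bound $\|u_n\|_{\dot H^{s/2}}$, use the energy identity \eqref{Eq4.3} to force $\|u_n\|_{\dot H^{r/2}}\to0$, and then combine compactness of $H^{s/2}(\TT)\hookrightarrow\mc C(\TT)$ with a subsequence argument to get $\|u_n\|_{L^\infty}\to0$. The one difference is that you identify the limit as zero frequency by frequency via $|\widehat{u_n}(k)|\le|k|^{-r/2}\|u_n\|_{\dot H^{r/2}}$, which covers all $r$ at once; the paper instead splits into $r>0$ (Poincaré) and $r\le0$, where it argues by contradiction using that $L^2$ convergence implies $\dot H^{r/2}$ convergence.
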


\begin{proof}
	Assume that (i) fails, i.e.\ there exists $M>0$ such that
	\begin{equation}\label{Eq5.6}
		\|u_n\|_{\dot H^{s/2}}\le M \qquad\text{for all }n.
	\end{equation}
	Let us show that $\|u_n\|_{L^{2}}\to 0$ as $n\to +\infty$. Using \eqref{Eq4.2} and $\varepsilon_n\to0$, we deduce
	\begin{equation*}
		\lim_{n\to +\infty}\frac{\|u_{n}\|_{\dot{H}^{r/2}}}{\|u_{n}\|_{\dot{H}^{s/2}}}=\lim_{n\to +\infty}\sqrt{\varepsilon_{n}}=0.
	\end{equation*}
	Therefore,
	\begin{equation}
		\label{Eq5.7}
		\lim_{n\to +\infty}\|u_{n}\|_{\dot{H}^{r/2}}=0.
	\end{equation}
	Suppose first that $r>0$. Since $u_n$ has zero mean,
	\[
	\|u_n\|_{L^2}\leq \|u_n\|_{\dot H^{r/2}}.
	\]
	Together with \eqref{Eq5.7}, this gives $\|u_n\|_{L^2}\to0$.
	
	If $r\le0$, we argue by contradiction. Suppose $\|u_n\|_{L^2}\not\to0$. Then there exist $c>0$ and a subsequence
	(still denoted $u_n$) such that
	\begin{equation}\label{Eq5.8}
		\|u_n\|_{L^2}\ge c \qquad\text{for all }n.
	\end{equation}
	Moreover, since $u_n\in H^s_0(\TT)$ has zero mean, again
	\[
	\|u_n\|_{L^2} \leq \|u_n\|_{\dot H^{s/2}}.
	\]
	Hence \eqref{Eq5.6} implies $\|u_n\|_{L^2}\le M$.
	By this bound and the compactness of the embedding
	$H^{s/2}(\TT)\hookrightarrow L^2(\TT)$, we may extract a further subsequence such that
	\[
	\lim_{n\to+\infty}u_n =  u_\ast \quad\text{ in }L^2(\TT).
	\]
	In particular, $\|u_\ast\|_{L^2}\ge c$ by \eqref{Eq5.8}. On the other hand, when $r\le0$ we have $|k|^{r}\le1$
	for $k\neq0$, and therefore strong $L^2$ convergence implies strong $\dot H^{r/2}$ convergence.
	Combining with \eqref{Eq5.7} yields $\|u_\ast\|_{\dot H^{r/2}}=0$, hence
	\[
	0=\|u_\ast\|_{\dot H^{r/2}}^2
	=\sum_{k\neq0}|k|^{r}|\widehat u_\ast(k)|^2.
	\]
	Since $|k|^{r}>0$ for every $k\neq0$, we obtain $\widehat u_\ast(k)=0$ for all $k\neq0$, and since $u_\ast$
	has zero mean, also $\widehat u_\ast(0)=0$. Thus $u_\ast\equiv0$, contradicting $\|u_\ast\|_{L^2}\ge c$.
	Therefore $\|u_n\|_{L^2}\to0$.
	
	Finally, since $s>1$, from the compactness of the embedding $H^{s/2}(\TT)\hookrightarrow \mc{C}(\TT)$, we may extract, from any subsequence of $(u_n)$, a further subsequence converging
	in $\mc{C}(\TT)$ to some limit $v$. But we already proved $u_n\to0$ in $L^2(\TT)$, hence necessarily $v\equiv0$.
	This concludes the proof.
\end{proof}

\section{Numeric Analysis}\label{Se6}
\label{B}

%# títulos que poner a los diagramas del caso no eliptico
%# S0: Series of solution profiles along ... for \e \in [0.6178,1]
%# S1: Series of solution profiles  along ...  for \e \in [0.1629,.25]
%# S2: Series of solution profiles  along ... for \e \in [0.0779,0.111]
%# S3: Series of solution profiles  along ... for \e \in [0.0446,0.0625]
%# S4: Series of solution profiles  along ... for \e \in [0.030,0.040]

%# títulos que poner a los diagramas del caso eliptico
%# S0: Series of solution profiles  along ... for \e \in [0.1581,1]
%# S1: Series of solution profiles  along ... for \e \in [0.1508,.50]
%# S2: Series of solution profiles  along ... for \e \in [0.1477,0.3333]
%# S3: Series of solution profiles  along ... for \e \in [0.1492,0.2500]
%# S4: Series of solution profiles  along ... for \e \in [0.1499,0.2000]
As usual in numerical simulations, a previous discretization of the problem is necessary in order to reduce the original continuous problem to a finite dimensional one. In most of the previous works devoted to numerical continuation in bifurcation theory,  a pseudo-spectral method combining a spectral method with collocation has usually been used to discretize the problem, especially to compute small solutions bifurcating from $u=0$ (see, e.g., G\'{o}mez-Re\~{n}asco and L\'{o}pez-G\'{o}mez  \cite{GRLGNA},  L\'{o}pez-G\'{o}mez, Molina-Meyer and Tellini \cite{LGMMT2} and L\'opez-G\'omez, Molina-Meyer and Rabinowitz \cite{LGMMR}). This method gives high accuracy at a reasonable computational cost (see, e.g., Canuto, Hussaini, Quarteroni and Zang \cite{CHQZ}). In particular, it is very efficient for choosing the shooting direction from the trivial solution in order to compute the bifurcated small classical solutions, as it provides us with the true values of the first bifurcation points (see Eilbeck \cite{Ei}).

\par
However, we have preferred in this article to use centered finite difference schemes, as described below, in order to discretize the problem, since bifurcation values from the trivial branch are well known (see Theorem \ref{Th3.6}) and easy to compute for the particular case for which simulations have been performed. The domain of the problem is first discretized using a regular mesh of equidistant points $(x_0, \ldots, x_{N+1}) \in [0,\pi]^{N+2}$ with $N\geq 1$. If we denote by $h$ the step size of the mesh, we get:
\begin{equation*}
	h=\frac{\pi}{N+1} \qquad \text{and} \qquad x_i = ih, \qquad i \in \{0, \ldots ,N+1\}.
\end{equation*}
Let us also denote by $u_i \approx u(x_i)$, the approximate values of the unknown solution $u$ at each node of the mesh.

\par
We then discretize the left and right hand side operators of equation \eqref{Eq1.2}, taking into account the fact that the solutions we are seeking must be odd and periodic. Since the right hand side contains a Fourier multiplier, a numerical quadrature using the values $u_i$ is necessary. In this paper, we chose a simple trapezoidal rule, of order 2, for such quadrature. For the left hand side, involving the derivative of the unknown $u$ with respect to the spatial variable $x$, a centered finite difference scheme, also of order 2, was used.

\par
The discrete problem consists then in finding all possible solutions $(\e,u_h(\e)) \in \mathbb{R} \times \mathbb{R}^N$ of the discretized equation $\mf{F}_N(\e,u_h)=0$ where $N$ is a parameter that depends on the chosen mesh. These solutions will converge to the solutions $(\e,u)$ of the original problem when $N \to +\infty$  or $h \to 0$. Such a convergence was proved, for general Galerkin approximations, by Brezzi, Rappaz and Raviart \cite{BRR1,BRR2,BRR3}, and by L\'{o}pez-G\'{o}mez,  Molina-Meyer and  Villareal \cite{LGMMV}. These results are based on the implicit function theorem. As a consequence, in these situations, the local structure of the solution sets for the continuous and the discrete models are equivalent provided that a sufficiently fine discretization is performed. 
In this paper, we have chosen $N=100$ and deemed it sufficient, taking into account the computation complexity of the problem. Indeed, the right hand side operator, once discretized, is a fully dense matrix of size $100$ by $100$ as opposed to simpler operators such as the Laplacian, whose corresponding matrix, once discretized, is a sparse (tridiagonal) matrix.

\par
In order to compute the components of solutions for the particular case $r=\tfrac{1}{2}$ and $s=\tfrac{3}{2}$, we started from a value of $\e_0^k$ equal to $k^{-1}$, together with an initial guess $u_0^k(x)=\sin(k\pi x)$. The choice of such an initial iterate is motivated by Theorem \ref{Th3.6}. Applying then the correction algorithm, based on Newton's method, we obtain the approximate solution of the problem for the initial value of $\e_0^k$. Finally, by performing a global continuation based on the algorithm of Keller and Yang \cite{KY} and detecting whether some secondary bifurcation occurs, we were able to continue the computation of the component of solutions. As a stopping criterion for Newton's method, we considered an iteration to be satisfactory when the infinity norm of our \emph{augmented system}, evaluated at such an iteration, is smaller than $0.0001$. The \lq\lq augmented system" refers to the system associated with the algorithm of Keller and Yang \cite{KY}, which makes regular any (singular) turning point through an appropriate parametrization by a pseudo-length of arc of curve. 

\par
Computing the components of solutions for small values of $\e$ is extremely challenging since these solutions develop narrow boundary layers where the gradients become extremely large. This phenomenon brings about numerical instabilities in form of high frequency oscillations inside these layers. The profiles of solutions depicted in this paper are those corresponding to values of $\e$ for which the computations are free of instabilities. In order to plot the bifurcation diagrams, we decided to depict in ordinates the discrete $L^2$ norm versus $\e$ in abscissa. Indeed, when these high frequency oscillations occur, the smoothing $L^2$ norm behaves \emph{well} and allows to plot these diagrams for a wider range of $\e$, despite these high frequency oscillations. In addition, for some of the main bifurcated branches we have computed, i.e those emanating from the trivial branch $u=0$, secondary bifurcations have been detected but their analysis has been left out of this paper and has been postponed to a future work.

\par
In view of the numerical results presented in this paper (see also Theorem \ref{Th5.5}), it is reasonable to conjecture that solutions blow up in $H^s$ to $+\infty$ as $\e \to 0^{+}$. Indeed, as $\e$ decreases, the values for which the extremum of solutions along components $\mathscr{C}_k$ is reached, tend to concentrate in the vicinity of some points (see Figure \ref{F2}). For instance, for the profiles of the connected component $\mathscr{C}_{1}$, the gradient blows up in the vicinity of $x=-\pi,\pi$. This phenomenon originates boundary layers, as mentioned previously, where the gradient of solutions become very large. In addition, all solutions along the same component share the same zeros, independently of the values of $\e$.

\appendix

\section{Preliminaries on nonlinear spectral theory}
\label{A1}
In this subsection we collect some fundamental concepts about nonlinear spectral theory that will be used throughout the article. We start with some definitions. A \emph{Fredholm operator family} is any continuous map $\mathfrak{L}\in \mathcal{C}([a,b],\Phi_{0}(U,V))$.  For any given $\mathfrak{L}\in \mathcal{C}([a,b],\Phi_{0}(U,V))$, it is said that $\e\in[a,b]$ is a \emph{generalized eigenvalue} of $\mathfrak{L}$ if $\mathfrak{L}(\e)\notin GL(U,V)$, and the \emph{generalized spectrum} of $\mathfrak{L}$, $\Sigma(\mathfrak{L})$,  is defined through   	
\begin{equation*}
	\Sigma(\mathfrak{L}):=\{\e\in [a,b]: \mathfrak{L}(\e)\notin GL(U,V)\}.
\end{equation*}
One of the cornerstones of spectral theory is the concept of algebraic multiplicity. Algebraic multiplicity is classically defined for eigenvalues of compact operators in Banach spaces. Let $K: U\to U$ be a compact linear operator on the $\mathbb{K}$-Banach space $U$, $\mathbb{K}\in\{\mathbb{R},\mathbb{C}\}$. We set
\begin{equation*}
	\mathfrak{L}(\e):=K-\e I_U, \quad \e\in J_{\e_0},
\end{equation*}
where $J_{\e_0}\subset \mathbb{K}$ is a neighbourhood of $\e_0$ and $\e_0$ is an eigenvalue of $K$. Then, the
classical algebraic multiplicity\index{multiplicity!classical} of
$\e_0$ as an eigenvalue of $K$ is defined through
\begin{equation*}
	\mf{m}_{\alg}[K,\e_0] := \dim \bigcup_{\mu=1}^\infty
	N[(K-\e_0  I_U)^\mu]
	\label{(8.2)}.
\end{equation*}
By Fredholm's theorem, for $\e_{0}\neq 0$, $\mf{L}:J_{\e_0}\to \Phi_{0}(U)$ where $\Phi_{0}(U)$ denotes the space of Fredholm operators of index zero $T:U\to U$. 
\par In 1988, J. Esquinas and J. L\'{o}pez-G\'{o}mez \cite{Es,ELG}, inspired by the work of Krasnoselski \cite{Kr}, Rabinowitz \cite{Ra} and Magnus \cite{Ma}, generalized the concept of algebraic multiplicity to every Fredholm operator family $\mf{L}:[a,b]\to \Phi_{0}(U,V)$, not necessarily of the form $\mathfrak{L}(\e)=K-\e I_U$, $U=V$, with $K$ compact. They denote it by $\chi[\mf{L},\e_{0}]$ and they proved that this concept is consistent with the classical algebraic multiplicity, that is, $\chi[K-\e I_{U},\e_{0}]=\mf{m}_{\alg}[K,\e_0]$ with $K$ compact and that it shares many properties of its classical counterpart. This theory was subsequently refined in the monograph \cite{LG01}.
We proceed to define the generalized algebraic multiplicity through the concept of algebraic eigenvalue going back to \cite{LG01}. 

\begin{definition}
	Let $\mathfrak{L}\in \mathcal{C}([a,b], \Phi_{0}(U,V))$ and $\kappa\in\mathbb{N}$. A generalized eigenvalue $\e_{0}\in\Sigma(\mathfrak{L})$ is said to be $\kappa$-algebraic if there exists $\varepsilon>0$ such that
	\begin{enumerate}
		\item[{\rm (a)}] $\mathfrak{L}(\e)\in GL(U,V)$ if $0<|\e-\e_0|<\varepsilon$;
		\item[{\rm (b)}] There exists $C>0$ such that
		\begin{equation}
			\label{2.2}
			\|\mathfrak{L}^{-1}(\e)\|_{\mc{L}}<\frac{C}{|\e-\e_{0}|^{\kappa}}\quad\hbox{if}\;\;
			0<|\e-\e_0|<\varepsilon;
		\end{equation}
		\item[{\rm (c)}] $\kappa$ is the minimal integer for which \eqref{2.2} holds.
	\end{enumerate}
\end{definition}
Throughout this paper, the set of $\kappa$-algebraic eigenvalues of $\mathfrak{L}$ is  denoted by $\Alg_\kappa(\mathfrak{L})$, and the set of \emph{algebraic eigenvalues} by
\[
\Alg(\mathfrak{L}):=\bigcup_{\kappa\in\mathbb{N}}\Alg_\kappa(\mathfrak{L}).
\]
We will construct an infinite dimensional analogue of the classical algebraic multiplicity for algebraic eigenvalues. It can be carried out through the theory of Esquinas and L\'{o}pez-G\'{o}mez
\cite{ELG},  where the following pivotal concept, generalizing the transversality condition of
Crandall and Rabinowitz \cite{CR},  was introduced. Throughout this paper, we set
$$\mathfrak{L}_{j}:=\frac{1}{j!}\mathfrak{L}^{(j)}(\e_{0}), \quad 1\leq j\leq r,$$
should these derivatives exist.

\begin{definition}
	Let $\mathfrak{L}\in \mathcal{C}^{r}([a,b],\Phi_{0}(U,V))$ and $1\leq \kappa \leq r$. Then, a given $\e_{0}\in \Sigma(\mathfrak{L})$ is said to be a $\kappa$-transversal eigenvalue of $\mathfrak{L}$ if
	\begin{equation*}
		\bigoplus_{j=1}^{\kappa}\mathfrak{L}_{j}\left(\bigcap_{i=0}^{j-1}N(\mathfrak{L}_{i})\right)
		\oplus R(\mathfrak{L}_{0})=V\;\; \hbox{with}\;\; \mathfrak{L}_{\kappa}\left(\bigcap_{i=0}^{\kappa-1}N(\mathfrak{L}_{i})\right)\neq \{0\}.
	\end{equation*}
\end{definition}

For these eigenvalues, the following generalized concept of algebraic multiplicity can be introduced:
\begin{equation*}
	\chi[\mathfrak{L}, \e_{0}] :=\sum_{j=1}^{\kappa}j\cdot \dim \mathfrak{L}_{j}\left(\bigcap_{i=0}^{j-1}N[\mathfrak{L}_{i}]\right).
\end{equation*}
In particular, when $N[\mf{L}_0]=\mathrm{span}\{\v\}$ for some $\v\in U$ such that $\mf{L}_1\v\notin R[\mf{L}_0]$, then
\begin{equation}
	\label{ii.4}
	\mf{L}_1(N[\mf{L}_0])\oplus R[\mf{L}_0]=V
\end{equation}
and hence, $\l_0$ is a 1-transversal eigenvalue of $\mf{L}(\l)$ with $\chi[\mf{L},\l_0]=1$. The transversality condition \eqref{ii.4} goes back to Crandall and Rabinowitz \cite{CR}. More generally, under condition \eqref{ii.4}, it holds that
\begin{equation*}
	\chi[\mf{L},\l_0]=\dim N[\mf{L}_0].
\end{equation*}
According to Theorems 4.3.2 and 5.3.3 of \cite{LG01}, for every $\mathfrak{L}\in \mathcal{C}^{r}([a,b], \Phi_{0}(U,V))$, $\kappa\in\{1,2,...,r\}$ and $\e_{0}\in \Alg_{\kappa}(\mathfrak{L})$, there exists a polynomial $\Phi: [a,b]\to \mathcal{L}(U)$ with $\Phi(\e_{0})=I_{U}$ such that $\e_{0}$ is a $\kappa$-transversal eigenvalue of the path $\mathfrak{L}^{\Phi}(\e):=\mathfrak{L}(\e)\circ\Phi(\e)$
and $\chi[\mathfrak{L}^{\Phi},\e_{0}]$ is independent of the curve of \emph{trasversalizing local isomorphisms} $\Phi$ chosen to transversalize $\mathfrak{L}$ at $\e_0$. Therefore, the following concept of multiplicity
is consistent
\begin{equation*}
	\chi[\mf{L},\e_0]:= \chi[\mathfrak{L}^{\Phi},\e_{0}],
\end{equation*}
and it can be easily extended by setting
$\chi[\mathfrak{L},\e_0] =0$ if $\e_0\notin\Sigma(\mathfrak{L})$ and
$\chi[\mathfrak{L},\e_0] =+\infty$ if $\e_0\in \Sigma(\mathfrak{L})
\setminus \Alg(\mathfrak{L})$ and $r=+\infty$. Thus, $\chi[\mathfrak{L},\e]$ is well defined for all  $\e\in [a,b]$ of any smooth path $\mathfrak{L}\in \mathcal{C}^{\infty}([a,b],\Phi_{0}(U,V))$.

\end{document}